\newtheorem{assumption}{Assumption}
\theoremstyle{plain}
\newtheorem{theorem}{Theorem}
\theoremstyle{plain}
\newtheorem{remark}{Remark}
\theoremstyle{plain}
\newtheorem{definition}{Definition}
\theoremstyle{plain}
\newcommand{\hp}{s}
\newtheoremstyle{myAlgoStyle}
{}                 %Space above
{}                 %Space below
{\tt}                         %Body font: original {\normalfont}
{}                         %Indent amount (empty = no indent,%\parindent = paraindent)
{\normalfont\bfseries}  %Thm head font original
{ }  	%punctuation after head
{ }%Space after thm head: " " = normal interword %space; \newline = linebreak
{\textbf{\thmname{#1}\thmnumber{ #2} \thmnote{(#3)}}}
\theoremstyle{myAlgoStyle}
\newtheorem{myAlgorithm}{Algorithm}
\title{A stable and linear time discretization for a thermodynamically consistent model for
two-phase incompressible flow%
\footnote{The authors gratefully acknowledge the financial support by the Deutsche
Forschungsgemeinschaft through the priority program SPP1506 entitled ''Transport processes at
fluidic interfaces''.}}
\author{
Harald Garcke
\footnote{
Fakult\"at f\"ur Mathematik,
Universit\"at Regensburg,
93040 Regensburg.},
Michael Hinze
\footnote{
Fachbereich Mathematik,
Universit\"at Hamburg,
Bundesstra\ss e 55,
20146 Hamburg.}, 
Christian Kahle
\footnote{
Fachbereich Mathematik,
Universit\"at Hamburg,
Bundesstra\ss e 55, 
20146 Hamburg.}
}
\begin{document}

\maketitle

\begin{abstract}
A new time discretization scheme for the numerical simulation of two-phase flow governed by a
thermodynamically consistent diffuse interface model is presented.
The scheme is consistent in the sense that it allows for a discrete in
time energy inequality.
An adaptive spatial discretization is proposed that conserves the energy
inequality in the fully discrete setting by applying a suitable post processing step to the adaptive
cycle. For the fully discrete scheme a quasi-reliable error estimator is derived  which
estimates  the error both of the flow velocity, and of the phase field.  The
validity of the energy inequality in the fully discrete setting is numerically investigated.
\end{abstract}

\section*{Introduction}
In the present work we propose a stable and (essentially) linear  time discretization scheme for
two-phase flows governed by the diffuse interface model
\begin{align}
  \rho\partial_t v + \left( \left( \rho v + J\right)\cdot\nabla
  \right)v
  - \mbox{div}\left(2\eta Dv\right) + \nabla p =& \mu\nabla \varphi + \rho g &&
  \forall x\in\Omega,\, \forall t \in I,\label{eq:CHNSstrong1}\\
  \mbox{div}(v) = &0&&
  \forall x\in\Omega,\, \forall t \in I,\label{eq:CHNSstrong2}\\
  \partial_t \varphi + v \cdot\nabla \varphi - \mbox{div}(m\nabla \mu) = &0&&
  \forall x\in\Omega,\, \forall t \in I,\label{eq:CHNSstrong3}\\
  -\sigma\epsilon \Delta \varphi + F'(\varphi) - \mu =& 0&&
  \forall x\in\Omega,\, \forall t \in I,\label{eq:CHNSstrong4}\\
  v(0,x) =& v_0(x)      &&    \forall x \in \Omega,\label{eq:CHNSstrongIC1}\\
  \varphi(0,x) =& \varphi_0(x) &&\forall x \in \Omega,\label{eq:CHNSstrongIC2}\\
  v(t,x) =& 0 &&\forall x \in \partial \Omega,\, \forall t \in I,\label{eq:CHNSstrongBC1}\\
  \nabla \mu(t,x)\cdot \nu_\Omega =
  \nabla \varphi(t,x) \cdot \nu_\Omega =& 0  &&\forall x \in \partial
  \Omega,\, \forall t \in I,\label{eq:CHNSstrongBC2}
\end{align}
where $J = -\frac{d\rho}{d\varphi}m\nabla \mu$.
This model is proposed in \cite{AbelsGarckeGruen_CHNSmodell}.
Here $\Omega \subset \mathbb{R}^n,\, n\in \{2,3\}$, denotes an open and bounded domain, $I=(0,T]$
with $0<T<\infty$ a time interval,
$\varphi$ denotes the phase field,
$\mu$ the chemical potential,
$v$ the volume averaged velocity,
$p$ the pressure,
and $\rho = \rho(\varphi) = \frac{1}{2}\left((\rho_2-\rho_1)\varphi +(\rho_1+\rho_2)\right)$ the mean density,
where $0<\rho_1\leq\rho_2$ denote the densities of the involved fluids.
The viscosity is denoted by $\eta$ and can be chosen arbitrarily, fulfilling $\eta(-1) =
\tilde\eta_1$ and $\eta(1) = \tilde \eta_2$, with individual fluid viscosities $\eta_1,\eta_2$.
The mobility is denoted by $m = m(\varphi)$. The graviational force is denoted by $g$.
By $Dv = \frac{1}{2}\left(\nabla v + (\nabla v)^t\right)$ we denote the symmetrized gradient.
The scaled surface tension is denoted by $\sigma$ and the interfacial
width is proportional to $\epsilon$.
The free energy is denoted by $F$.
For $F$ we use a splitting $F = F_+ + F_-$, where $F_+$ is convex and $F_-$
is concave.

The above model couples the Navier--Stokes equations \eqref{eq:CHNSstrong1}--\eqref{eq:CHNSstrong2}
to the Cahn--Hilliard model \eqref{eq:CHNSstrong3}--\eqref{eq:CHNSstrong4} in a thermodynamically
consistent way, i.e. a free energy inequality holds.
It is the main goal to introduce and analyze an (essentially) linear time discretization scheme for
the numerical treatment of  \eqref{eq:CHNSstrong1}--\eqref{eq:CHNSstrongBC2}, which also on the
discrete level fulfills the free energy inequality. This in conclusion leads to a stable scheme
that is thermodynamically consistent on the discrete level.

Existence of weak solutions to system \eqref{eq:CHNSstrong1}--\eqref{eq:CHNSstrongBC2} for a
specific class of free energies $F$ is shown in
\cite{AbelsDepnerGarcke_CHNS_AGG_exSol,AbelsDepnerGarcke_CHNS_AGG_exSol_degMob}.
See also the work \cite{Gruen_convergence_stable_scheme_CHNS_AGG}, where the existence of
weak solutions for a different class of free energies $F$ is shown by passing to the limit in a
numerical scheme. 
We refer to \cite{Lowengrub_CahnHilliard_and_Topology_transitions},
\cite{Boyer_two_phase_different_densities},
\cite{Ding_Spelt_Shu_diffuse_interface_model_diff_density},
\cite{AkiDreyer_QuasiIncompressibleInterfaceModel}, 
and the review \cite{AndersenFaddenWheeler} for other diffuse interface models for two-phase
incompressible flow.
Numerical approaches for different variants of the Navier--Stokes Cahn--Hilliard system have been
studied in \cite{KayStylesWelford}, \cite{Feng_FullyDiscreteNSCH},
\cite{Boyer_two_phase_different_densities}, \cite{Aland_Voigt_bubble_benchmark},
\cite{Gruen_convergence_stable_scheme_CHNS_AGG}, \cite{HintermuellerHinzeKahle_adaptiveCHNS},
\cite{Gruen_Klingbeil_CHNS_AGG_numeric} and \cite{GuoLinLowengrub_numericalMethodForCHNS_Lowengrub}.

This work is organized as follows.
In Section \ref{sec:weak_and_TimeDisc} we derive a weak
formulation of \eqref{eq:CHNSstrong1}--\eqref{eq:CHNSstrongBC2} and formulate a time discretization
scheme.
In Section \ref{sec:fullyDiscrete} we derive the fully discrete model and
show the existence of solutions for both the time discrete, and the
fully discrete model, as well as energy inequalities, both for the time discrete model, and for the
fully discrete model.
In Section \ref{sec:Adaptivity} we use the energy inequality to derive a residual based adaptive
concept, and
in Section \ref{sec:Numerics} we numerically investigate properties of
our simulation scheme.

%%%%%%%%%%%%%%%%%%%%%%%%%%%%%%%%%%%%%%%%%%%%%%%%%%%%%%%%%%%%%%%%%%%%%%%%%%%%%%%%%%%
%%%%%%%%%%%%%%%%%%%%%%%%%%%%%%%%%%%%%%%%%%%%%%%%%%%%%%%%%%%%%%%%%%%%%%%%%%%%%%%%%%%
%%%%%%%%%%%%%%%%%%%%%%%%%%%%%%%%%%%%%%%%%%%%%%%%%%%%%%%%%%%%%%%%%%%%%%%%%%%%%%%%%%%
%%%%%%%%%%%%%%%%%%%%%%%%%%%%%%%%%%%%%%%%%%%%%%%%%%%%%%%%%%%%%%%%%%%%%%%%%%%%%%%%%%%
%as you expect
\section*{Notation and assumptions}

Let $\Omega \subset \mathbb{R}^n$, $n\in \{2,3\}$ denote a bounded domain with boundary
$\partial \Omega$ and outer normal $\nu_\Omega$.
Let $I = (0,T)$ denote a time interval.

We use the conventional notation for Sobolev and Hilbert Spaces, see e.g.
\cite{Adams_SobolevSpaces}.
With $L^p(\Omega)$, $1\leq p\leq \infty$, we denote the space of measurable functions on $\Omega$,
whose modulus to the power $p$ is Lebesgue-integrable. $L^\infty(\Omega)$ denotes the space of
measurable functions on $\Omega$, which are essentially bounded.
For $p=2$ we denote by
$L^2(\Omega)$ the space of square integrable functions on $\Omega$
with inner product $(\cdot,\cdot)$ and norm $\|\cdot \|$.
For a subset $D\subset \Omega$ and  functions $f,g\in L^2(\Omega)$ we by $(f,g)_D$  denote the inner product of
$f$ and $g$ restricted to $D$, and by $\|f\|_D$ the respective norm.
By $W^{k,p}(\Omega)$, $k\geq 1, 1\leq p\leq \infty$, we denote the Sobolev space of functions
admitting weak derivatives up to order $k$ in $L^p(\Omega)$.
If $p=2$ we write $H^k(\Omega)$.
The subset $H^1_0(\Omega)$ denotes $H^1(\Omega)$ functions
with vanishing boundary trace.\\
We further set
\begin{align*}
  L^2_{0}(\Omega) = \{v\in L^2(\Omega)\,|\, (v,1) = 0\},
\end{align*}
and with
\begin{align*}
  H(\mbox{div},\Omega) = \{v\in H^1_0(\Omega)^n\,|\, (\mbox{div}(v),q) = 0\,\forall q\in
  L^2_{(0)}(\Omega) \}
\end{align*}
we denote the space of all weakly solenoidal $H^1_0(\Omega)$ vector fields.\\
For $u\in L^q(\Omega)^n$, $q>n$, and $v,w \in H^1(\Omega)^n$ we introduce the
trilinear form
\begin{align}
  a(u,v,w) =
  \frac{1}{2}\int_\Omega\left( \left(u\cdot\nabla\right)v \right)w\,dx
  -\frac{1}{2}\int_\Omega\left( \left(u\cdot\nabla\right)w \right)v\,dx. \label{eq:trilinearForm}
\end{align}
Note that there holds $a(u,v,w) = -a(u,w,v)$, and especially $a(u,v,v) = 0$.

\medskip

\noindent For the data of our problem we assume:
\begin{enumerate}[label=A\arabic{*}]
  \item \label{ass:rhoetamob_bounded}
There exists constants $\overline \rho \geq \underline \rho>0$,
$\overline \eta \geq \underline \eta>0$, and
$\overline m \geq \underline m>0$ such that the following relations are
satisfied:
  \begin {itemize}
    \item[$\bullet$] $\overline \rho \geq \rho(\varphi)\geq\underline \rho>0$,
    \item[$\bullet$] $\overline \eta \geq \eta(\varphi)\geq\underline \eta>0$,
    \item[$\bullet$] $\overline m\geq m(\varphi)\geq\underline m>0$.
  \end{itemize}
  Especially we assume that the mobility is non degenerated. In addition we assume, that $\rho$,
  $\mu$, and $m$ are continuous.
  \item \label{ass:F_C1}
  $F:\mathbb{R}\to \mathbb{R}$ is continuously differentiable.
  \item \label{ass:F_boundedPoly}
  $F$ and the derivatives $F'_+$ and $F'_-$ are polynomially bounded, i.e. there exists $C>0$
  such that $|F(x)| \leq C(1+|x|^q)$, $|F'_+(x)| \leq C(1+|x|^{q-1})$ 
  and $|F'_-(x)| \leq  C(1+|x|^{q-1})$ holds for some
  $q \in [1,4]$ if $n=3$ and $q \in [1,\infty)$ if $n=2$, 
  \item \label{ass:F_NewtonDiff}
  $F'_+$ is Newton (sometimes called slantly) differentiable (see e.g.
  \cite{Hintermueller_primaldualSet}) regarded as nonlinear operator $F'_+: H^1(\Omega)\to
  \left(H^1(\Omega)\right)^*$ with Newton derivative $G$ satisfying
  \begin{align*}
    (G(\varphi)\delta\varphi,\delta\varphi)\geq 0
  \end{align*}
   for each $\varphi\in
  H^1(\Omega)$ and $\delta \varphi\in H^1(\Omega)$.
\end{enumerate} 

To ensure Assumption \ref{ass:rhoetamob_bounded} we introduce
a cut-off mechanism to ensure the bounds on $\rho$ defined in Assumption
\ref{ass:rhoetamob_bounded} independently of $\varphi$. Note that $\eta(\varphi)$ and $m(\varphi)$
can be chosen arbitrarily fulfilling the stated bounds.
We define the mass density as a smooth, monotone and strictly positive function  $\rho(\varphi)$
fulfilling
\begin{align*}
  \rho(\varphi) = 
  \begin{cases}
    \frac{\tilde \rho_2-\tilde \rho_1}{2}\varphi + \frac{\tilde\rho_1+\tilde\rho_2}{2}
    & \mbox{ if } -1-\frac{\tilde \rho_1}{\tilde\rho_2-\tilde\rho_1} 
    < \varphi< 
    1+\frac{\tilde \rho_1}{\tilde\rho_2-\tilde\rho_1},\\
    \mbox{const} & 
    \mbox{ if } \varphi > 1 + \frac{2\tilde\rho_1}{\tilde\rho_2-\tilde\rho_1},\\
    \mbox{const} & 
    \mbox{ if } \varphi < -1 - \frac{2\tilde \rho_1}{\tilde\rho_2-\tilde\rho_1}.
  \end{cases}
\end{align*} 
For a discussion we refer to \cite[Remark 2.1]{Gruen_convergence_stable_scheme_CHNS_AGG}.

\begin{remark}\label{rm:freeEnergies}
The Assumptions \ref{ass:F_C1}--\ref{ass:F_NewtonDiff} are for example fulfilled by the
polynomial free energy 
\begin{align*}
  F^{poly}(\varphi) = \frac{\sigma}{4\epsilon}\left(1-\varphi^2\right)^2.
\end{align*}
Another free energy fulfilling these assumptions is the relaxed double-obstacle free energy
given by
\begin{align} 
   F^{rel}(\varphi) & = \frac{\sigma}{2\epsilon}\left(1-\varphi^2+\hp\lambda^2( \varphi)\right),
  \label{eq:freeEnergy}
\end{align}
with 
\begin{align*}
    \lambda(\varphi)& := \max(0,\varphi-1) + \min(0,\varphi+1),
\end{align*}
where $\hp\gg 0$ denotes the relaxation parameter.
$F^{rel}$ is introduced in \cite{HintermuellerHinzeTber} as Moreau--Yosida relaxation of the
double-obstacle free energy
\begin{align*}
  F^{obst}(\varphi) = 
  \begin{cases}
    \frac{\sigma}{2\epsilon}\left(1-\varphi^2\right) & \mbox{ if } |\varphi|\leq 1,\\
    0           & \mbox{ else}, 
  \end{cases}
\end{align*}
which is proposed in \cite{BloweyElliott_I} to model phase separation.

In the numerical examples of this work we use the free energy $F \equiv F^{rel}$.
 For this choice the splitting into convex and concave part reads
\begin{align*}
  F_+(\varphi) &= \hp\frac{\sigma}{2\epsilon}\lambda^2(\varphi),
  &
  F_-(\varphi) &= \frac{\sigma}{2\epsilon}(1-\varphi^2).
\end{align*}
\end{remark}

%%%%%%%%%%%%%%%%%%%%%%%%%%%%%%%%%%%%%%%%%%%%%%%%%%%%%%%%%%%%%%%%%%%%%%%%%%%%%%%%%%%
%%%%%%%%%%%%%%%%%%%%%%%%%%%%%%%%%%%%%%%%%%%%%%%%%%%%%%%%%%%%%%%%%%%%%%%%%%%%%%%%%%%
%%%%%%%%%%%%%%%%%%%%%%%%%%%%%%%%%%%%%%%%%%%%%%%%%%%%%%%%%%%%%%%%%%%%%%%%%%%%%%%%%%%
%%%%%%%%%%%%%%%%%%%%%%%%%%%%%%%%%%%%%%%%%%%%%%%%%%%%%%%%%%%%%%%%%%%%%%%%%%%%%%%%%%%
%Here we derive the weak formulation as well as the time discretization
%section identifier in labels: TD
\section{The time discrete setting}\label{sec:weak_and_TimeDisc}
In the present section we formulate our time discretization scheme
that is based on a weak formulation of
\eqref{eq:CHNSstrong1}--\eqref{eq:CHNSstrongBC2}
which we derive next.
To begin with,  note that for a sufficiently smooth solution
$(\varphi,\mu,v)$  of
\eqref{eq:CHNSstrong1}--\eqref{eq:CHNSstrongBC2}
we can rewrite \eqref{eq:CHNSstrong1},
using the linearity of $\rho$, as
\begin{align}
  \partial_t( \rho v) + \mbox{div}\left( \rho v \otimes v \right)
  + \mbox{div}\left(v\otimes  J\right) - \mbox{div}\left(2\eta Dv\right) + \nabla p = \mu\nabla
  \varphi + \rho g,\label{eq:CHNSstrong1-2}
\end{align}
see \cite[p. 14]{AbelsGarckeGruen_CHNSmodell}.

We also note that the term $\rho v + J$ in \eqref{eq:CHNSstrong1} is not solenoidal
(which might lead to difficulties both in the analytical and the numerical treatment)
and that the trilinear form $(((\rho v +J)\cdot \nabla) u,w)$ is not anti-symmetric.
To obtain a weak formulation yielding an anti-symmetric convection term we use a convex combination
of  \eqref{eq:CHNSstrong1} and \eqref{eq:CHNSstrong1-2} to define a weak formulation.
We multiply equations \eqref{eq:CHNSstrong1} and \eqref{eq:CHNSstrong1-2} by the solenoidal  test
function $\frac{1}{2}w \in H( \mbox{div},\Omega)$, integrate over $\Omega$, add the resulting
equations and perform integration by parts.
This gives
\begin{align*}
  \frac{1}{2}\int_{\Omega}\left( \partial_t(\rho v)+\rho\partial_t v \right) w\,dx
  +\int_{\Omega}  2\eta Dv:Dw\,dx+a(\rho v +J,v,w)
  =   \int_\Omega \mu\nabla \varphi w + \rho g w\,dx.
\end{align*}
Equations \eqref{eq:CHNSstrong3}--\eqref{eq:CHNSstrong4} are treated classically.
This leads to
\begin{definition}
We call $v$, $\varphi$, $\mu$ a
weak solution to \eqref{eq:CHNSstrong1}--\eqref{eq:CHNSstrongBC2}
if $v(0) = v_0$, $ \varphi(0) = \varphi_0$, $v(t) \in H(\mbox{div},\Omega)$ for $a.e.\,t \in I$ and
\begin{align}
  \frac{1}{2}\int_{\Omega}\left( \partial_t(\rho v)+\rho\partial_t v \right) w\,dx
  +\int_{\Omega}2\eta Dv:Dw\,dx&\nonumber\\
  +a(\rho v + J,v,w)
  =   \int_\Omega \mu\nabla \varphi w +\rho g w\,dx &\quad \forall w\in H(\mbox{div},\Omega),
  \label{eq:CHNS1_weak}\\
  \int_\Omega\left(\partial_t\varphi + v\cdot \nabla \varphi\right) \Phi\,dx
  + \int_\Omega m(\varphi)\nabla \mu \cdot \nabla \Phi\,dx =0
  &\quad \forall \Phi \in  H^1(\Omega),
  \label{eq:CHNS2_weak}\\
  \sigma \epsilon\int_\Omega\nabla \varphi\cdot\nabla \Psi\,dx
  +\int_\Omega F'(\varphi)\Psi\,dx
  - \int_\Omega \mu\Psi\,dx = 0 &\quad \forall \Psi \in H^1(\Omega),
  \label{eq:CHNS3_weak}
\end{align}
is satisfied for almost all $t\in I$.
\end{definition}

\begin{theorem}\label{thm:ContinousEnergyEstimate}
Let $v,\varphi,\mu$ be a sufficiently smooth
 solution to
\eqref{eq:CHNS1_weak}--\eqref{eq:CHNS3_weak}.
Then  there holds 
\begin{align*}
  \frac{1}{2}\frac{d}{dt}\left( \int_\Omega \rho |v|^2 + \sigma\epsilon |\nabla \varphi|^2 +
  F(\varphi)\,dx\right) = -\int_\Omega 2\eta |Dv|^2 + m|\nabla \mu|^2\,dx 
  + \int_\Omega  \rho g  v\,dx .
  \end{align*}
\end{theorem}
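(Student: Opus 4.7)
The plan is to test the three weak equations with strategically chosen test functions and add, exploiting the antisymmetry of the convection term and the cancellation between the capillary forcing $\mu\nabla\varphi$ in the momentum equation and the transport term $v\cdot\nabla\varphi$ in the Cahn--Hilliard equation. Concretely, I would choose $w=v$ in \eqref{eq:CHNS1_weak}, $\Phi=\mu$ in \eqref{eq:CHNS2_weak}, and $\Psi=\partial_t\varphi$ in \eqref{eq:CHNS3_weak}. Sufficient smoothness together with $v\in H(\mathrm{div},\Omega)$ makes these admissible.

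In the momentum equation the convection contribution $a(\rho v+J,v,v)$ vanishes by the property $a(u,v,v)=0$ noted after \eqref{eq:trilinearForm}; the viscous term yields $\int 2\eta|Dv|^2$, and the forcing gives $\int \mu\,v\cdot\nabla\varphi+\rho g\cdot v$. The crucial algebraic step, and the whole reason for the symmetrised weak form \eqref{eq:CHNS1_weak}, is the identity
\begin{equation*}
\tfrac{1}{2}\bigl(\partial_t(\rho v)+\rho\,\partial_t v\bigr)\cdot v
=\tfrac{1}{2}\partial_t\rho\,|v|^2+\rho\,\partial_t v\cdot v
=\tfrac{1}{2}\partial_t\bigl(\rho|v|^2\bigr),
\end{equation*}
which transforms the time-derivative block into $\tfrac{d}{dt}\tfrac{1}{2}\int\rho|v|^2$ without any need to differentiate $\rho$ separately or to invoke $\partial_t\rho+\mathrm{div}(\rho v+J)=0$ in a non-trivial way.

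Next, testing \eqref{eq:CHNS2_weak} with $\mu$ produces $\int\partial_t\varphi\,\mu+\int v\cdot\nabla\varphi\,\mu+\int m|\nabla\mu|^2=0$, and testing \eqref{eq:CHNS3_weak} with $\partial_t\varphi$ gives, via the chain rule applied to $\tfrac{\sigma\epsilon}{2}|\nabla\varphi|^2$ and $F(\varphi)$, the identity $\tfrac{d}{dt}\int\bigl(\tfrac{\sigma\epsilon}{2}|\nabla\varphi|^2+F(\varphi)\bigr)=\int\mu\,\partial_t\varphi$. Substituting the former into the latter eliminates $\int\mu\,\partial_t\varphi$ in favour of $-\int v\cdot\nabla\varphi\,\mu-\int m|\nabla\mu|^2$.

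Finally, adding the rewritten momentum identity to this combined Cahn--Hilliard identity makes the two transport terms $\pm\int\mu\,v\cdot\nabla\varphi$ cancel, leaving exactly the claimed balance with dissipation $-\int(2\eta|Dv|^2+m|\nabla\mu|^2)$ and the gravitational power $\int\rho g\cdot v$. The main obstacle, such as it is, is the first step: recognising that the peculiar symmetric time-derivative block in \eqref{eq:CHNS1_weak} is designed precisely to collapse to $\tfrac{1}{2}\partial_t(\rho|v|^2)$ upon testing with $v$; once this is seen, the rest is chain rule and the cancellation built into the coupling.
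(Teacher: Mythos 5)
Your proposal is correct and follows exactly the paper's own proof: test \eqref{eq:CHNS1_weak} with $w\equiv v$, \eqref{eq:CHNS2_weak} with $\Phi\equiv\mu$, \eqref{eq:CHNS3_weak} with $\Psi\equiv\partial_t\varphi$, and add. The details you supply — the antisymmetry killing $a(\rho v+J,v,v)$, the collapse of the symmetric time-derivative block to $\tfrac{1}{2}\partial_t(\rho|v|^2)$, and the cancellation of $\pm\int\mu\,v\cdot\nabla\varphi$ — are precisely what the paper's one-line proof leaves implicit.
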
 
\begin{proof}
By testing \eqref{eq:CHNS1_weak} with $w\equiv v$, \eqref{eq:CHNS2_weak} with $\Phi \equiv \mu$ and
\eqref{eq:CHNS3_weak} with $\Psi \equiv \partial_t \varphi$ and adding the resulting equations the
claim follows.
\end{proof}

In \cite{AbelsDepnerGarcke_CHNS_AGG_exSol,AbelsDepnerGarcke_CHNS_AGG_exSol_degMob}
an alternative weak formulation of \eqref{eq:CHNSstrong1}--\eqref{eq:CHNSstrongBC2} is proposed, for
which the authors show existence of weak solutions.

We now introduce a time discretization which mimics the energy inequality in Theorem
\ref{thm:ContinousEnergyEstimate} on the discrete level.
 Let
$0=t_0<t_1<\ldots<t_{k-1}<t_k<t_{k+1}<\ldots<t_M=T$
denote an equidistant subdivision of the
interval $\overline I = [0,T]$ with $\tau_{k+1}-\tau_k = \tau$.
From here onwards the superscript $k$ denotes the corresponding variables at time instance
$t_k$.

\bigskip

\noindent \textbf{Time integration scheme}\\
Let $\varphi_0 \in H^1(\Omega)$
% with $|\varphi_0|\leq 1 \,a.e.$
and $v_0 \in  H(\mbox{div},\Omega)$.

\medskip

\noindent \textit{Initialization for $k=0$:}\\
Set $\varphi^0 = \varphi_0$ and $v^0=v_0$.\\
Find $\varphi^1 \in H^1(\Omega)$, $\mu^1\in H^1(\Omega)$, $v^1 \in H(\mbox{div},\Omega)$, such that
for all $w\in H(\mbox{div},\Omega)$, $\Phi \in H^1(\Omega)$, and $\Psi \in H^1(\Omega)$ it holds
\begin{align}
  \frac{1}{\tau}\int_\Omega \rho^1(v^1-v^0) w \,dx
  +\int_\Omega ((\rho^0v^0+J^1)\cdot \nabla) v^1 \cdot w\,dx&\nonumber\\
  +\int_{\Omega} 2\eta^1  Dv^{1}:Dw\,dx
  -\int_\Omega \mu^{1}\nabla \varphi^1 w + \rho^1 g w\,dx  &= 0&&
  \forall w \in H(\mbox{div},\Omega),\label{eq:TD:chns1_solenoidal_init}\\
  \frac{1}{\tau}\int_\Omega (\varphi^{1}-\varphi^0) \Phi \,dx +
  \int_\Omega(v^{0}\cdot \nabla \varphi^{0}) \Phi \, dx& \nonumber \\
  +  \int_\Omega m(\varphi^0)\nabla \mu^{1}\cdot\nabla \Phi\,dx &=0 &&\forall
  \Phi\in H^1(\Omega),\label{eq:TD:chns2_solenoidal_init}\\
  \sigma \epsilon\int_\Omega\nabla \varphi^{1}\cdot\nabla \Psi\,dx
  - \int_\Omega \mu^{1}\Psi\,dx \nonumber \\
  +\int_\Omega ((F_+)'(\varphi^{1})+(F_-)'(\varphi^0))\Psi\,dx &= 0&&\forall
  \Psi\in H^1(\Omega),\label{eq:TD:chns3_solenoidal_init}
\end{align}
where $ J^1 := -\frac{d\rho}{d\varphi}(\varphi^1)m^1\nabla \mu^1$.

\medskip

\noindent \textit{Two-step scheme for $k\geq 1$:} \\
Given $\varphi^{k-1}\in H^1(\Omega)$,
$\varphi^k\in H^1(\Omega)$,
$\mu^k \in W^{1,q}(\Omega)$, $q>n$,
$v^k \in H(\mbox{div},\Omega)$,\\
find
$v^{k+1} \in H(\mbox{div},\Omega)$, $\varphi^{k+1}\in H^1(\Omega)$, $\mu^{k+1}\in
H^{1}(\Omega)$ satisfying
\begin{align}
  \frac{1}{2\tau}\int_\Omega \left( \rho^kv^{k+1}-\rho^{k-1}v^{k} \right) w +
  \rho^{k-1}(v^{k+1}-v^k)w\,dx&\nonumber\\
  +a(\rho^k v^k+J^k,v^{k+1},w)+\int_{\Omega} 2\eta^k Dv^{k+1}:Dw\,dx\nonumber\\
  -   \int_\Omega \mu^{k+1}\nabla \varphi^k w - \rho^{k} g w\,dx &= 0&&
  \forall w \in H(\mbox{div},\Omega),\label{eq:TD:chns1_solenoidal}\\
  \frac{1}{\tau}\int_\Omega (\varphi^{k+1}-\varphi^k) \Phi \,dx +
  \int_\Omega(v^{k+1}\cdot \nabla \varphi^k) \Phi \, dx& \nonumber \\
  +  \int_\Omega m(\varphi^k)\nabla \mu^{k+1}\cdot\nabla \Phi\,dx &=0 &&\forall
  \Phi\in H^1(\Omega),\label{eq:TD:chns2_solenoidal}\\
  \sigma \epsilon\int_\Omega\nabla \varphi^{k+1}\cdot\nabla \Psi\,dx
  - \int_\Omega \mu^{k+1}\Psi\,dx \nonumber \\
  +\int_\Omega ((F_+)'(\varphi^{k+1})+(F_-)'(\varphi^k))\Psi\,dx &= 0&&\forall
  \Psi\in H^1(\Omega),\label{eq:TD:chns3_solenoidal}
\end{align}
where $ J^k := -\frac{d\rho}{d\varphi}(\varphi^k)m^k\nabla \mu^k$.

We note that in \eqref{eq:TD:chns1_solenoidal}--\eqref{eq:TD:chns3_solenoidal} the only nonlinearity
arises from $F_+'$ and thus only the equation \eqref{eq:TD:chns3_solenoidal} is nonlinear.
Let us summarize properties of this scheme in the following remark.

\begin{remark}\label{rm:InitializationStep}
~
\begin{itemize}
  \item The time discretization \eqref{eq:TD:chns1_solenoidal_init}--\eqref{eq:TD:chns3_solenoidal_init}
  used in the  initialization step is motivated by the time discretization
  in \cite{KayStylesWelford} for the equal density case.
  In particular it yields a sequential coupling of the Cahn--Hilliard and the Navier--Stokes
  systems. 
  Concerning the
  existence of a unique solution we refer to e.g. \cite{HintermuellerHinzeKahle_adaptiveCHNS}.
  From the regularity theory for the Laplace operator we have $\mu^1\in H^2(\Omega)$.
  \item Existence and uniqueness of a solution to the time discrete model
  \eqref{eq:TD:chns1_solenoidal}--\eqref{eq:TD:chns3_solenoidal} is shown in Theorem
  \ref{thm:TD:exSol}.
  Using the Assumption \ref{ass:F_NewtonDiff} posed on $F$, it can be shown that
  Newton's method in function space can be used to compute a solution to
  \eqref{eq:TD:chns1_solenoidal}--\eqref{eq:TD:chns3_solenoidal} using the steps from Theorem
  \ref{thm:TD:exSol}.
  \item Through the use of $\rho^{k-1}$,
  \eqref{eq:TD:chns1_solenoidal}--\eqref{eq:TD:chns3_solenoidal} is a 2-step scheme. However,
  by replacing \eqref{eq:TD:chns1_solenoidal} with
  \begin{align*}
    \frac{1}{2\tau}\int_\Omega \left( \rho^{k+1}v^{k+1}-\rho^{k}v^{k} \right) w +
    \rho^{k}(v^{k+1}-v^k)w\,dx\\
    +a(\rho^k v^k+J^k,v^{k+1},w)+\int_{\Omega} 2\eta Dv^{k+1}:Dw\,dx\\
    -   \int_\Omega \mu^{k+1}\nabla \varphi^k w + \rho^{k}g w\,dx &= 0\quad
    \forall w \in H(\mbox{div},\Omega),
  \end{align*}
  one obtains an one-step scheme, which then also is nonlinear in the time discretization of
  \eqref{eq:CHNS1_weak}.
  The resulting system is analyzed in a forthcoming paper.
\end{itemize}
\end{remark}

In \cite{Gruen_Klingbeil_CHNS_AGG_numeric} Gr\"un and Klingbeil propose a time-discrete solver
for \eqref{eq:CHNSstrong1}--\eqref{eq:CHNSstrongBC2} which leads to strongly coupled systems for
$v,\varphi$ and $p$ at every time step and requires a fully nonlinear solver. For this scheme Gr\"un
in \cite{Gruen_convergence_stable_scheme_CHNS_AGG} proves an energy inequality and the existence of
so called generalized solutions.

%%%%%%%%%%%%%%%%%%%%%%%%%%%%%%%%%%%%%%%%%%%%%%%%%%%%%%%%%%%%%%%%%%%%%%%%%%%%%%%%%%%
%%%%%%%%%%%%%%%%%%%%%%%%%%%%%%%%%%%%%%%%%%%%%%%%%%%%%%%%%%%%%%%%%%%%%%%%%%%%%%%%%%%
%%%%%%%%%%%%%%%%%%%%%%%%%%%%%%%%%%%%%%%%%%%%%%%%%%%%%%%%%%%%%%%%%%%%%%%%%%%%%%%%%%%
%%%%%%%%%%%%%%%%%%%%%%%%%%%%%%%%%%%%%%%%%%%%%%%%%%%%%%%%%%%%%%%%%%%%%%%%%%%%%%%%%%%
%Here we state the fully discrete model and show the fully discrete energy estimate
%We further show the existence of solution for both the fully discrete and the time discrete setting
% and therafter show the energy inequality in the semidiscrete setting
%section identifier in labels: FD
\section{The fully discrete setting and energy inequalities}\label{sec:fullyDiscrete}
For a numerical treatment we next discretize the weak formulation
\eqref{eq:TD:chns1_solenoidal}--\eqref{eq:TD:chns3_solenoidal} in space.
We aim at an adaptive discretization of the domain $\Omega$, and thus to have a different
spatial discretization in every time step.

Let $\mathcal T^{k} = \bigcup_{i=1}^{NT}T_i$ denote a conforming triangulation of
$\overline \Omega$ with closed simplices $T_i,i=1,\ldots,NT$ and edges $E_i,i=1,\ldots,NE$,
$\mathcal{E}^{k} = \bigcup_{i=1}^{NE}E_i$. Here $k$ refers to the time instance $t_{k}$.
On $\mathcal T^{k}$ we define the following finite element spaces:
\begin{align*}
  \mathcal{V}^{1}(\mathcal T^{k})
  =& \{v\in C(\mathcal{T}^{k}) \, |
  \, v|_T \in P^1(T)\, \forall T\in  \mathcal{T}^{k}\}
  =: \mbox{span}\{\Phi^i\}_{i=1}^{NP},\\
  \mathcal{V}^{2}(\mathcal T^{k}) =& \{v\in C(\mathcal{T}^{k}) \, |
  \, v|_T \in P^2(T)\, \forall T\in  \mathcal{T}^{k}\},
\end{align*}
where $P^l(S)$ denotes the space of polynomials up to order $l$ defined on $S$.

We introduce the discrete analogon to the space $H(\mbox{div},\Omega)$:
\begin{align*}
  H(\mbox{div},\mathcal{T}^{k}) &=
  \{ v\in \mathcal V^2(\mathcal T^{k})^n\,|\, (\mbox{div}v,q) =  0\,
  \forall q\in \mathcal V^1(\mathcal T^{k}) \cap L^2_{(0)}(\Omega),\, 
  v|_{\partial \Omega}=0\}\\
  &:= \mbox{span}\{b^i\}_{i=1}^{NF},
\end{align*}

We further  introduce a $H^1$-stable projection operator
$\mathcal{P}^{k} : H^1(\Omega) \to \mathcal{V}^1(\mathcal T^{k})$
satisfying
\begin{align*}
  \|\mathcal{P}^{k}v\|_{L^p(\Omega)} \leq \|v\|_{L^p(\Omega)}
\mbox{  and  }
  \|\nabla \mathcal{P}^{k}v \|_{L^r(\Omega)} \leq \|\nabla v\|_{L^r(\Omega)}
\end{align*}
for $v \in H^1(\Omega)$ with $r \in [1,2]$ and $p\in [1,6)$ if $n=3$, and $p\in [1,\infty)$ if
$n=2$.
Possible choices are the Cl\'ement operator (\cite{Clement_Interpolation}) or, by restricting
the preimage to $C(\overline \Omega) \cap H^1(\Omega)$, the Lagrangian interpolation operator.

\bigskip 

Using these spaces we state the discrete counterpart of
\eqref{eq:TD:chns1_solenoidal}--\eqref{eq:TD:chns3_solenoidal}:

Let $k\geq 1$, given
$\varphi^{k-1}\in \mathcal{V}^1(\mathcal{T}^{k-1})$,
$\varphi^{k}\in \mathcal{V}^1(\mathcal{T}^{k})$,
$\mu^{k}\in \mathcal{V}^1(\mathcal{T}^{k})$,
$v^{k}\in  H(\mbox{div},\mathcal T^{k})$,
find
$v^{k+1}_h \in  H(\mbox{div},\mathcal{T}^{k+1})$,
$\varphi^{k+1}_h\in \mathcal V^1(\mathcal  T^{k+1})$,
$\mu^{k+1}_h \in \mathcal V^1(\mathcal T^{k+1})$
such that for all
$w \in  H(\mbox{div},\mathcal T^{k+1})$,
$\Phi \in \mathcal V^1(\mathcal T^{k+1})$,
$\Psi \in  \mathcal V^1(\mathcal T^{k+1})$
there holds:
\begin{align}
  \frac{1}{2\tau}(\rho^{k}v^{k+1}_h-\rho^{k-1}v^k+\rho^{k-1}(v^{k+1}_h-v^k),w)
  + a(\rho^kv^k+J^k,v_h^{k+1},w)\nonumber\\
  +(2\eta^kDv^{k+1}_h,D w)-(\mu^{k+1}_h\nabla\varphi^{k}+\rho^k g,w)
  &= 0,\label{eq:FD:chns1_solenoidal}\\
  \frac{1}{\tau}(\varphi^{k+1}_h-\mathcal{P}^{k+1}\varphi^k,\Phi)+(m(\varphi^k)\nabla
  \mu^{k+1}_h,\nabla \Phi) +(v^{k+1}_h\nabla \varphi^k, \Phi)
  &=0,\label{eq:FD:chns2_solenoidal}\\
  \sigma\epsilon(\nabla \varphi^{k+1}_h,\nabla
  \Psi)+(F_+^\prime(\varphi^{k+1}_h)+F^\prime_-(\mathcal{P}^{k+1}\varphi^k),\Psi)-(\mu^{k+1}_h,\Psi)
  &=0, \label{eq:FD:chns3_solenoidal}
\end{align}
where
$\varphi^0 = P\varphi_0$ denotes the $L^2$ projection of $\varphi_0$ in 
$\mathcal{V}^1(\mathcal T^0)$, 
$v^0 = P^Lv_0$ denotes the Leray projection of $v_0$ in $H(\mbox{div},\mathcal T^0)$ (see
\cite{ConstantinFoias_NS}), and $\varphi_h^1,\mu_h^1,v_h^1$ are obtained from the fully discrete variant of
\eqref{eq:TD:chns1_solenoidal_init}--\eqref{eq:TD:chns3_solenoidal_init}.

\subsection{Existence of solution to the fully discrete system}\label{ssec:ExSol_FD}
We next show the existence of a unique solution to the fully discrete system
\eqref{eq:FD:chns1_solenoidal}--\eqref{eq:FD:chns3_solenoidal}.

\begin{theorem}\label{thm:FD:exSol}
There exist
$v^{k+1}_h \in H(\mbox{div},\mathcal T^{k+1})$,
$\varphi^{k+1}_h \in \mathcal{V}^1(\mathcal{T}^{k+1})$,
$\mu^{k+1}_h \in\mathcal{V}^1(\mathcal{T}^{k+1})$ solving
\eqref{eq:FD:chns1_solenoidal}--\eqref{eq:FD:chns3_solenoidal}.
\end{theorem}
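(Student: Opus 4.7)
The plan is to apply Brouwer's fixed point theorem (in its ``acute angle''/zero-of-a-continuous-field form) on the finite-dimensional product space
\begin{align*}
X := H(\mathrm{div},\mathcal T^{k+1}) \times \mathcal V^1(\mathcal T^{k+1}) \times \mathcal V^1(\mathcal T^{k+1}).
\end{align*}
Equations \eqref{eq:FD:chns1_solenoidal}--\eqref{eq:FD:chns3_solenoidal} rewrite as $\mathcal A(y)=0$ for a single continuous field $\mathcal A:X \to X$, via Riesz representation with respect to some fixed inner product on $X$. Continuity of $\mathcal A$ follows from Assumption \ref{ass:F_C1} together with Assumption \ref{ass:F_boundedPoly}, which also ensures that $\mathcal A$ is well-defined on all of $X$. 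By the standard corollary of Brouwer's theorem it then suffices to produce an a priori bound on any $y\in X$ satisfying $\mathcal A(y)=0$ (in fact, on any $y$ with $\langle \mathcal A(y),y\rangle_X \leq 0$).

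The a priori bound is obtained by transferring the energy identity of Theorem \ref{thm:ContinousEnergyEstimate} to the fully discrete level. I test \eqref{eq:FD:chns1_solenoidal} with $w := v^{k+1}_h$, \eqref{eq:FD:chns2_solenoidal} with $\Phi := \mu^{k+1}_h$, and \eqref{eq:FD:chns3_solenoidal} with $\Psi := \tau^{-1}(\varphi^{k+1}_h - \mathcal P^{k+1}\varphi^k)$, and add the three identities. Three cancellations make the argument clean: (i) the antisymmetry of $a$ kills the convection term, since $a(\rho^k v^k + J^k, v^{k+1}_h, v^{k+1}_h)=0$; (ii) the capillary term $-(\mu^{k+1}_h \nabla \varphi^k, v^{k+1}_h)$ coming from \eqref{eq:FD:chns1_solenoidal} exactly cancels the transport term $(v^{k+1}_h\cdot\nabla\varphi^k, \mu^{k+1}_h)$ from \eqref{eq:FD:chns2_solenoidal}; (iii) the $\mu^{k+1}_h$-couplings between \eqref{eq:FD:chns2_solenoidal} and \eqref{eq:FD:chns3_solenoidal} cancel by symmetry of the $L^2$ pairing. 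Convexity of $F_+$ and concavity of $F_-$ then yield
\begin{align*}
\bigl(F_+'(\varphi^{k+1}_h) + F_-'(\mathcal P^{k+1}\varphi^k),\,\varphi^{k+1}_h - \mathcal P^{k+1}\varphi^k\bigr) \;\geq\; \bigl(F(\varphi^{k+1}_h) - F(\mathcal P^{k+1}\varphi^k),\,1\bigr),
\end{align*}
so the $F$-contribution is bounded below by a data-only quantity. The dissipative contributions $(2\eta^k Dv^{k+1}_h, Dv^{k+1}_h)$ and $(m(\varphi^k)\nabla\mu^{k+1}_h,\nabla\mu^{k+1}_h)$ are bounded below by multiples of the respective seminorms thanks to Assumption \ref{ass:rhoetamob_bounded}, and the discrete kinetic term $(\rho^k v^{k+1}_h - \rho^{k-1}v^k + \rho^{k-1}(v^{k+1}_h - v^k), v^{k+1}_h)/(2\tau)$ expands to a non-negative quadratic form in $v^{k+1}_h$ minus a data-only term controllable by Young's inequality. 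Korn's inequality then upgrades the $L^2$-bound on $Dv^{k+1}_h$ to full $H^1$-control of $v^{k+1}_h$.

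The mean of $\mu^{k+1}_h$ is not controlled by $\|\nabla \mu^{k+1}_h\|$ and must be recovered separately: testing \eqref{eq:FD:chns3_solenoidal} with $\Psi\equiv 1$ yields
\begin{align*}
\int_\Omega \mu^{k+1}_h\,dx = \int_\Omega F_+'(\varphi^{k+1}_h) + F_-'(\mathcal P^{k+1}\varphi^k)\,dx,
\end{align*}
and by Assumption \ref{ass:F_boundedPoly} together with the embedding $H^1(\Omega) \hookrightarrow L^6(\Omega)$ (resp.\ $L^p$ for all finite $p$ in two dimensions) the right-hand side is uniformly controlled by the already-established $H^1$-bound on $\varphi^{k+1}_h$. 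This closes the a priori bound in $X$, and Brouwer's theorem produces the desired zero of $\mathcal A$. I expect the main obstacle to be the verification of cancellation (ii) at the fully discrete level, since $\nabla\varphi^k$ lives on the old mesh $\mathcal T^k$ while $v^{k+1}_h$ lives in $H(\mathrm{div},\mathcal T^{k+1})$, so the product must be interpreted as an $L^2(\Omega)$-integral across meshes; the borderline polynomial-growth exponent $q=4$ in three dimensions is also delicate when verifying continuity of $\mathcal A$, but remains tractable via the $L^6$ embedding.
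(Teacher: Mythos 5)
Your overall route coincides with the paper's: the paper also rewrites the finite-dimensional system as a zero-finding problem for a continuous vector field and applies the acute-angle corollary of Brouwer's theorem, in the form of \cite[Lem. II.1.4]{Temam_NavierStokes_old}, with coercivity coming from precisely your cancellations (i)--(iii) and the convex--concave splitting of $F$. The genuine gap is in your choice of space. You work on the full product $X=H(\mbox{div},\mathcal T^{k+1})\times\mathcal V^1(\mathcal T^{k+1})\times\mathcal V^1(\mathcal T^{k+1})$ and concede that the testing does not control the mean of $\mu^{k+1}_h$, proposing to recover it ``separately'' from \eqref{eq:FD:chns3_solenoidal} tested with $\Psi\equiv1$. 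But that recovery uses the third equation, i.e.\ it is available only for exact zeros of $\mathcal A$, whereas the criterion you correctly state in parentheses requires a bound on \emph{every} $y$ with $\langle\mathcal A(y),y\rangle_X\le0$; a bound that holds only for zeros is vacuous for a field without zeros and yields no existence. And on the full space the acute-angle condition genuinely fails: take $v=0$, $\varphi=\mathcal P^{k+1}\varphi^k$ and $\mu\equiv c$ a constant. Then the contribution of \eqref{eq:FD:chns1_solenoidal} vanishes because its test function is $v=0$; equation \eqref{eq:FD:chns2_solenoidal}, evaluated there and tested with $\Phi\equiv c$, reduces to $\frac{1}{\tau}(\varphi-\mathcal P^{k+1}\varphi^k,c)=0$ since the gradient and transport terms vanish; and \eqref{eq:FD:chns3_solenoidal} is tested with $\tau^{-1}(\varphi-\mathcal P^{k+1}\varphi^k)=0$. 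Hence $\langle\mathcal A(y),y\rangle_X=0$ while $\|y\|_X$ is arbitrarily large, so no radius $R$ can work. The constant direction in $\varphi$ is equally degenerate, since Assumptions \ref{ass:F_C1}--\ref{ass:F_NewtonDiff} do not make $\int_\Omega F(\varphi)\,dx$ coercive with respect to the mean of $\varphi$ (take $F_+\equiv0$ and $F=F_-$ concave: the relevant term is linear in the constant shift, with uncontrolled sign).

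The paper closes exactly this hole \emph{before} invoking the fixed-point lemma. From \eqref{eq:FD:chns2_solenoidal} with $\Phi\equiv1$ every solution satisfies $(\varphi^{k+1}_h,1)=(\mathcal P^{k+1}\varphi^k,1)$, so one writes $\varphi^{k+1}_h=z+\alpha$ with the known constant $\alpha=\frac{1}{|\Omega|}(\mathcal P^{k+1}\varphi^k,1)$ and unknown $z$ in the mean-value-free space $V_{(0)}\subset\mathcal V^1(\mathcal T^{k+1})$; likewise $\mu^{k+1}_h$ is replaced by its mean-free part $y\in V_{(0)}$, and \eqref{eq:FD:chns2_solenoidal}--\eqref{eq:FD:chns3_solenoidal} are imposed preliminarily only for mean-free test functions. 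On $H(\mbox{div},\mathcal T^{k+1})\times V_{(0)}\times V_{(0)}$ the form $(Dv_1,Dv_2)+(\nabla y_1,\nabla y_2)+(\nabla z_1,\nabla z_2)$ is a genuine inner product (Korn and Poincar\'e), the degenerate constant directions no longer exist, and the energy-type estimate---for which the paper even gets away with the weaker input $(F_+'(z+\alpha)-F_+'(\alpha),z)\ge0$, i.e.\ monotonicity of $F_+'$ rather than your convexity inequality---yields the acute-angle condition and a root $(v^*,y^*,z^*)$. Only afterwards are the constants reinstated: $\varphi=z^*+\alpha$, and $\mu=y^*+\beta$ with $\beta$ \emph{defined} so that \eqref{eq:FD:chns3_solenoidal} holds for constant test functions---this is your mean formula, but used as a definition in the reconstruction step rather than as an a priori estimate inside the Brouwer argument. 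If you restructure your proof in this way, the rest of your argument (the three cancellations, which incidentally hold exactly across meshes since both pairings are the same $L^2(\Omega)$ integral, the lower bounds from Assumption \ref{ass:rhoetamob_bounded}, and continuity of $\mathcal A$ via Assumption \ref{ass:F_boundedPoly}) carries over.
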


\begin{proof}
By testing \eqref{eq:FD:chns2_solenoidal} with $\Phi\equiv 1$, integration by parts in
$(v^{k+1}_h\nabla\varphi^k,1)$ and using $v^{k+1}_h\in H(\mbox{div},\mathcal{T}^{k+1})$ we obtain
\begin{align*}
  (\varphi^{k+1}_h,1) = (\mathcal P^{k+1}\varphi^k,1).
\end{align*}
We define $\alpha=\frac{1}{|\Omega|}\int_\Omega \mathcal P^{k+1}\varphi^k\,dx$ and set
\begin{align*}
  V_{(0)} := \{ v_h\in \mathcal{V}^1(\mathcal{T}^{k+1})\, \mid \, (v_h,1)=0\}.
\end{align*}
Then $z^{k+1}:= \varphi^{k+1}-\alpha$ fulfills $z^{k+1}\in V_{(0)}$. In the following we use
$z^{k+1}$ as unknown for the phase field, since the mean value of $\varphi$ is fixed.
In addition we introduce $y^{k+1} := \mu^{k+1}_h-\frac{1}{|\Omega|}\int \mu^{k+1}_h\,dx$ and require
\eqref{eq:FD:chns2_solenoidal}--\eqref{eq:FD:chns3_solenoidal} preliminarily only for test functions
with zero mean value.

We define 
\begin{align*}
  X = H(\mbox{div},\mathcal{T}^{k+1}) \times V_{(0)} \times V_{(0)},
\end{align*}
with the inner product 
\begin{align*}
  \left( (v_1,y_1,z_1),(v_2,y_2,z_2) \right)_X := (Dv_1,Dv_2) + (\nabla y_1,\nabla y_2) + (\nabla
  z_1,\nabla z_2),
\end{align*}
and norm $\|\cdot\|_X^2 = (\cdot,\cdot)_X$.
It follows from the inequalities of Korn and Poincar\'e that $(\cdot,\cdot)_X$ indeed forms an inner
product on $X$.
For $(v,y,z)\in X$ we define
\begin{align*}
  \left(G(v,y,z),(\overline v,\overline y, \overline z)\right)_X := &
  \left(\frac{1}{2}(\rho^k+\rho^{k-1})v-\rho^{k-1}v^k,\overline v\right)
  +\tau a(\rho^kv^k+J^k,v,\overline v)\\
  &+ \tau (2\eta^k Dv,D\overline v)
  -\tau(y\nabla \varphi^k,\overline v) 
  - \tau(\rho^kg,\overline v)\\
  &+(z-\mathcal P^{k+1}\varphi^k,\overline y)
  + \tau(m(\varphi^k)\nabla y,\nabla \overline y)
  +\tau(v\nabla\varphi^k,\overline y)\\
  &+\sigma\epsilon(\nabla z,\nabla \overline z)
  +(F'_+(z+\alpha)+F'_-(\mathcal P^{k+1}\varphi^k),\overline z)
  -(y,\overline z).
\end{align*}
Now we show $(G(v,y,z),(v,y,z))_X >0$ for $\|(v,y,z)\|_X$ large enough and that $G$ satisfies the
supposition of \cite[Lem. II.1.4]{Temam_NavierStokes_old}. It then follows from 
\cite[Lem. II.1.4]{Temam_NavierStokes_old}, that $G$
 admits a root $(v^*,y^*,z^*) \in X$.

% The right hand side is a linear functional in $(\overline v, \overline y,\overline z)$ on the
% Hilbert space $X$ and hence the above representation follows from the Riesz representation theorem.
The function $G$ is obviously continuous.
We now estimate
\begin{equation}
\begin{aligned}
  (G(v,y,z),(v,y,z))_X \geq& \, \underline \rho (v,v) + 2\tau\underline \eta(Dv,Dv)\\
  & + \tau \underline m(\nabla y,\nabla y) + \sigma \epsilon(\nabla z,\nabla z)+(F'_+(z+\alpha),z)\\
  & -(\rho^{k-1}v^k,v) - \tau(\rho^k g,v)-(\mathcal P^{k+1}\varphi^k,y)+(F'_-(\mathcal
  P^{k+1}\varphi^k),z).
\end{aligned}
\label{eq:FD:proofExSol_G}
\end{equation}
Using the convexity of $F_+$, which implies that $F_+'$ is monotone, we obtain
\begin{align*}
  (F'_+(z+\alpha),z) = (F'_+(z+\alpha)-F'_+(\alpha),z) + (F'_+(\alpha),z) \geq (F'_+(\alpha),z).
\end{align*}
By using H\"older's and Poincar\'e's inequality in \eqref{eq:FD:proofExSol_G} we obtain 
\begin{align*}
  (G(v,y,z),(v,y,z))_X >0
\end{align*}
for $\|(v,y,z)\|_X\geq R$ if $R$ is large enough.
Now \cite[Lem. II.1.4]{Temam_NavierStokes_old} implies the existence of  $(v^*,y^*,z^*)\in X$ such
that $G(v^*,y^*,z^*)=0$.
Defining $(v,\mu,\varphi) = (v^*,y^*+\beta,z^*+\alpha)$ with $\beta$ such that $(\beta,1) =
(F'_+(\varphi)+F'_-(\mathcal P^{k+1}\varphi^k),1)$ we obtain that $(v,\mu,\varphi)$ solves
\eqref{eq:FD:chns1_solenoidal}--\eqref{eq:FD:chns3_solenoidal}.
\end{proof}

\begin{remark}
Note that we do not need that the variables from old time instances are defined on the mesh used on
the current time instance. We further do not need any smallness requirement on the mesh size $h$ or
on the  time step length $\tau$.
\end{remark}

\begin{theorem}\label{thm:FD:energyInequality}
Let $(\varphi^{k+1}_h$, $\mu_h^{k+1}$, $v_h^{k+1})$ be a solution to
\eqref{eq:FD:chns1_solenoidal}--\eqref{eq:FD:chns3_solenoidal}.
Then for $k\geq 1$:
\begin{align}
  \frac{1}{2}\int_\Omega \rho^k\left|v^{k+1}_h\right|^2\,dx
  + \frac{\sigma\epsilon}{2}\int_\Omega |\nabla \varphi^{k+1}_h|^2\,dx
  +\int_\Omega F(\varphi^{k+1}_h)\,dx\nonumber\\
  + \frac{1}{2}\int_\Omega \rho^{k-1}|v^{k+1}_h-v^k|^2\,dx
  + \frac{\sigma\epsilon}{2}
  \int_\Omega |\nabla \varphi^{k+1}_h-\nabla\mathcal{P}^{k+1} \varphi^k|^2\,dx\nonumber\\
  + \tau\int_\Omega 2\eta^k|Dv^{k+1}_h|^2\,dx
  +\tau\int_\Omega m^k|\nabla \mu^{k+1}_h|^2\,dx \nonumber\\
  \leq
  \frac{1}{2}\int_\Omega \rho^{k-1}\left|v^{k}\right|^2\,dx
  + \frac{\sigma\epsilon}{2}\int_\Omega |\nabla \mathcal{P}^{k+1}\varphi^{k}|^2\,dx
  +\int_\Omega F(\mathcal{P}^{k+1}\varphi^{k})\,dx 
  +\tau \int_\Omega  \rho^k g v^{k+1}_h.\label{eq:FD:energyInequality}
\end{align}
\end{theorem}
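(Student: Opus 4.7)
The plan is to mimic the continuous energy proof of Theorem \ref{thm:ContinousEnergyEstimate} at the discrete level by testing each of \eqref{eq:FD:chns1_solenoidal}--\eqref{eq:FD:chns3_solenoidal} with carefully chosen test functions, exploiting the admissibility of those tests (they lie in the discrete spaces $H(\mathrm{div},\mathcal T^{k+1})$ and $\mathcal V^1(\mathcal T^{k+1})$), summing, and cancelling/absorbing terms with discrete chain-rule type identities. Concretely, I would test \eqref{eq:FD:chns1_solenoidal} with $w=v^{k+1}_h$, \eqref{eq:FD:chns2_solenoidal} with $\Phi=\tau\mu^{k+1}_h$, and \eqref{eq:FD:chns3_solenoidal} with $\Psi=\varphi^{k+1}_h-\mathcal P^{k+1}\varphi^k$, then multiply the first identity by $\tau$ and add all three.

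After summing, three cancellations are immediate: the convective term vanishes because $a(\rho^k v^k+J^k,v^{k+1}_h,v^{k+1}_h)=0$ by antisymmetry of $a$; the two coupling contributions $-\tau(\mu^{k+1}_h\nabla\varphi^k,v^{k+1}_h)$ from \eqref{eq:FD:chns1_solenoidal} and $+\tau(v^{k+1}_h\nabla\varphi^k,\mu^{k+1}_h)$ from \eqref{eq:FD:chns2_solenoidal} annihilate; and the term $(\varphi^{k+1}_h-\mathcal P^{k+1}\varphi^k,\mu^{k+1}_h)$ from \eqref{eq:FD:chns2_solenoidal} cancels against $-(\mu^{k+1}_h,\varphi^{k+1}_h-\mathcal P^{k+1}\varphi^k)$ from \eqref{eq:FD:chns3_solenoidal}. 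What remains is a dissipation/energy balance in which the two viscous terms $2\tau(\eta^k Dv^{k+1}_h,Dv^{k+1}_h)$ and $\tau(m^k\nabla\mu^{k+1}_h,\nabla\mu^{k+1}_h)$ survive unchanged and will form the dissipation part on the left.

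Next I would apply the elementary polarization identities. For the inertia block, combining $(\rho^k v^{k+1}_h-\rho^{k-1}v^k,v^{k+1}_h)$ with $(\rho^{k-1}(v^{k+1}_h-v^k),v^{k+1}_h)$ via $a\cdot b=\tfrac12(|a|^2+|b|^2-|a-b|^2)$ and $(a-b)\cdot a=\tfrac12(|a|^2-|b|^2+|a-b|^2)$ produces precisely
\[
\tfrac12\!\int_\Omega\rho^k|v^{k+1}_h|^2-\tfrac12\!\int_\Omega\rho^{k-1}|v^k|^2+\tfrac12\!\int_\Omega\rho^{k-1}|v^{k+1}_h-v^k|^2.
\]
For the gradient block, $\sigma\epsilon(\nabla\varphi^{k+1}_h,\nabla(\varphi^{k+1}_h-\mathcal P^{k+1}\varphi^k))$ equals $\tfrac{\sigma\epsilon}{2}(|\nabla\varphi^{k+1}_h|^2-|\nabla\mathcal P^{k+1}\varphi^k|^2+|\nabla(\varphi^{k+1}_h-\mathcal P^{k+1}\varphi^k)|^2)$. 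Finally, for the potential contribution, convexity of $F_+$ and concavity of $F_-$ give the one-sided inequalities $F'_+(\varphi^{k+1}_h)(\varphi^{k+1}_h-\mathcal P^{k+1}\varphi^k)\ge F_+(\varphi^{k+1}_h)-F_+(\mathcal P^{k+1}\varphi^k)$ and $F'_-(\mathcal P^{k+1}\varphi^k)(\varphi^{k+1}_h-\mathcal P^{k+1}\varphi^k)\ge F_-(\varphi^{k+1}_h)-F_-(\mathcal P^{k+1}\varphi^k)$, whose sum integrates to $\int F(\varphi^{k+1}_h)-\int F(\mathcal P^{k+1}\varphi^k)$. Moving the gravity contribution $\tau(\rho^k g,v^{k+1}_h)$ to the right yields exactly \eqref{eq:FD:energyInequality}.

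The only genuinely delicate point is the handling of the momentum acceleration: getting the $\rho^{k-1}|v^{k+1}_h-v^k|^2$ numerical dissipation with the correct sign and factor is what justifies the specific choice of the two-step discretization with $\rho^{k-1}$ in \eqref{eq:FD:chns1_solenoidal}, and one must verify that the two polarization identities combine cleanly despite the density being evaluated at two different time levels. Everything else is bookkeeping, and crucially the argument uses neither a CFL-type restriction on $\tau$ nor any compatibility between successive meshes $\mathcal T^k$ and $\mathcal T^{k+1}$, since the projection $\mathcal P^{k+1}$ absorbs the mesh change in the energy difference.
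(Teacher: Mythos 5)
Your proposal is correct and follows essentially the same route as the paper's proof: the same test functions (up to an overall factor of $\tau$), the same cancellations via antisymmetry of $a(\cdot,\cdot,\cdot)$ and the coupling terms, the same polarization identities for the inertia and gradient blocks, and the same convexity/concavity bounds for $F_\pm$. The delicate point you flag about the two density levels is handled in the paper by exactly the combined identity you derive, so there is no gap.
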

\begin{proof}
We have
\begin{align}
  \frac{1}{2}\left(\rho^kv_h^{k+1} - \rho^{k-1}v^k\right)\cdot v_h^{k+1} +
  \frac{1}{2}\rho^{k-1}\left(v_h^{k+1}-v^k\right)\cdot v_h^{k+1}\nonumber\\
  = \frac{1}{2}\rho^k\left|v_h^{k+1}\right|^2 + \frac{1}{2}\rho^{k-1}\left|v_h^{k+1}-v^k\right|^2
  -\frac{1}{2}\rho^{k-1}\left|v^k\right|^2,\label{eq:FD:rho_v_ineq}
\end{align}
\begin{align}
  \nabla \varphi_h^{k+1}&\cdot \left(\nabla \varphi_h^{k+1}-\nabla \varphi^k\right)\nonumber \\
  = &\frac{1}{2}|\nabla \varphi_h^{k+1}|^2 - \frac{1}{2}|\nabla \varphi^k|^2
  + \frac{1}{2}|\nabla \varphi_h^{k+1}-\nabla \varphi^k|^2, \label{eq:FD:varphi_ineq}
\end{align}
and since $F_+$ is convex and $F_-$ is concave,
\begin{align}
  F_+(\varphi_h^{k+1}) - F_+(\varphi^k) &\leq
  F'_+(\varphi_h^{k+1})(\varphi_h^{k+1}-\varphi^k),\label{eq:FD:Fplus_ineq}\\
  F_-(\varphi_h^{k+1})-F_-(\varphi^{k}) &\leq
  F'_-(\varphi^{k})(\varphi_h^{k+1}-\varphi^k).\label{eq:FD:Fminus_ineq}
\end{align}

%%%%%
The inequality is now obtained from testing \eqref{eq:TD:chns1_solenoidal} with $v_h^{k+1}$,
\eqref{eq:TD:chns2_solenoidal} with $\mu_h^{k+1}$,
\eqref{eq:TD:chns3_solenoidal} with $(\varphi_h^{k+1} -\mathcal{P}^{k+1}\varphi^k)/\tau$,
and adding the resulting equations. This leads to
\begin{align*}
  % time derivative
  \frac{1}{2\tau}(\rho^kv_h^{k+1}-\rho^{k-1}v^k,v_h^{k+1})
  +\frac{1}{2\tau}(\rho^{k-1}(v_h^{k+1}-v^k),v_h^{k+1}) \\
  %transport
  +a(\rho^k v + J^k,v_h^{k+1},v_h^{k+1})
  %diffusion
  +(2\eta^k Dv_h^{k+1}:Dv_h^{k+1})
  %interfacial force
  -(\mu_h^{k+1}\nabla \varphi^k,v_h^{k+1})\\
  %transport
  +\frac{1}{\tau}(\varphi_h^{k+1}-\mathcal{P}^{k+1}\varphi^k,\mu_h^{k+1})
  + (v_h^{k+1}\nabla\varphi^k,\mu_h^{k+1})
  +(m^k\nabla \mu_h^{k+1},\nabla \mu_h^{k+1})\\
  %chemical potential
  +\sigma\epsilon\frac{1}{\tau}(\nabla
  \varphi_h^{k+1},\nabla(\varphi_h^{k+1}-\mathcal{P}^{k+1}\varphi^{k}))
  -\frac{1}{\tau}(\mu_h^{k+1},\varphi_h^{k+1}-\mathcal{P}^{k+1}\varphi^k)\\
  %free energy
  + \frac{1}{\tau}(F'_+(\varphi_h^{k+1}),\varphi_h^{k+1}-\mathcal{P}^{k+1}\varphi^k)
  + \frac{1}{\tau}(F'_-(\varphi^k),\varphi_h^{k+1}-\mathcal{P}^{k+1}\varphi^k)\\
  %gravity
  -(\rho^k g,v^{k+1}_h)
  &=0.
\end{align*}
The equalities  \eqref{eq:FD:rho_v_ineq} and \eqref{eq:FD:varphi_ineq} and the inequalities
\eqref{eq:FD:Fplus_ineq} and \eqref{eq:FD:Fminus_ineq} now imply
\begin{align*}
  \frac{1}{2\tau}\int_\Omega \left(\rho^k |v_h^{k+1}|^2 +
  \rho^{k-1}|v_h^{k+1}-v^k|^2 -\rho^{k-1}|v^k|^2
  \right)\,dx \\
  +\int_\Omega 2\eta^k |Dv_h^{k+1}|^2 \, dx
  +\int_\Omega m^k|\nabla \mu_h^{k+1}|^2\,dx \\
  + \frac{\sigma\epsilon}{2\tau}\int_\Omega |\nabla \varphi_h^{k+1}|^2\,dx
  -\frac{\sigma\epsilon}{2\tau} \int_\Omega |\nabla \mathcal{P}^{k+1}\varphi^{k}|^2 \,dx
  + \frac{\sigma\epsilon}{2\tau}\int_\Omega |\nabla \varphi_h^{k+1} - \nabla
  \mathcal{P}^{k+1}\varphi^k|^2\,dx\\
  +  \frac{1}{\tau}\int_\Omega \left(F(\varphi_h^{k+1})-F(\mathcal{P}^{k+1}\varphi^k)\right)\,dx
   -\int_\Omega\rho^k gv^{k+1}_h\,dx
  &\leq 0,
\end{align*}
which is the claim.
\end{proof}

\begin{theorem}\label{thm:FD:uniqueSolution}
System \eqref{eq:FD:chns1_solenoidal}--\eqref{eq:FD:chns3_solenoidal} admits a unique solution.
\end{theorem}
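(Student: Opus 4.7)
Existence is already established by Theorem \ref{thm:FD:exSol}, so the only content of the statement that still needs to be proved is uniqueness of the solution. The key structural feature I would exploit is that in \eqref{eq:FD:chns1_solenoidal}--\eqref{eq:FD:chns3_solenoidal} all coefficients built from data at the previous time steps ($\rho^{k-1},\rho^k,\eta^k,m(\varphi^k),\nabla\varphi^k,v^k,J^k,\mathcal{P}^{k+1}\varphi^k$) are frozen, so the difference of two solutions satisfies a system which is linear except for the term involving $F'_+(\varphi^{k+1}_h)$, and that nonlinearity is monotone by convexity of $F_+$.

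Concretely, suppose $(v_i,\mu_i,\varphi_i)$, $i=1,2$, are two solutions, and write $\delta v:=v_1-v_2$, $\delta\mu:=\mu_1-\mu_2$, $\delta\varphi:=\varphi_1-\varphi_2$. I would subtract the corresponding equations, test the difference of \eqref{eq:FD:chns1_solenoidal} with $w=\delta v$, the difference of \eqref{eq:FD:chns2_solenoidal} with $\Phi=\delta\mu$, the difference of \eqref{eq:FD:chns3_solenoidal} with $\Psi=\delta\varphi/\tau$, and add the three results. Three cancellations then carry the argument: the antisymmetry $a(\rho^k v^k+J^k,\delta v,\delta v)=0$ removes the convection contribution; the term $-(\delta\mu\nabla\varphi^k,\delta v)$ coming from the momentum equation cancels against $(\delta v\cdot\nabla\varphi^k,\delta\mu)$ coming from the Cahn--Hilliard equation; and the symmetric duality pair $(\delta\varphi,\delta\mu)/\tau$ and $-(\delta\mu,\delta\varphi)/\tau$ cancels as well.

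What remains is the identity
\begin{equation*}
\tfrac{1}{2\tau}\bigl((\rho^k+\rho^{k-1})\delta v,\delta v\bigr) + (2\eta^k D\delta v,D\delta v) + (m(\varphi^k)\nabla\delta\mu,\nabla\delta\mu) + \tfrac{\sigma\epsilon}{\tau}\|\nabla\delta\varphi\|^2 + \tfrac{1}{\tau}\bigl(F'_+(\varphi_1)-F'_+(\varphi_2),\delta\varphi\bigr) = 0,
\end{equation*}
in which every summand is non-negative: the first three by Assumption \ref{ass:rhoetamob_bounded}, the fourth trivially, and the fifth by monotonicity of $F'_+$ (inherited from convexity of $F_+$). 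Each term must therefore vanish. The uniform lower bound $\rho^k+\rho^{k-1}\geq 2\underline\rho>0$ yields $\delta v\equiv 0$. From $\nabla\delta\varphi\equiv 0$ together with the mean-value identity $(\varphi_i^{k+1},1)=(\mathcal{P}^{k+1}\varphi^k,1)$---already derived in the proof of Theorem \ref{thm:FD:exSol} by testing \eqref{eq:FD:chns2_solenoidal} with $\Phi\equiv 1$---one sees that $\delta\varphi$ is a constant with vanishing mean, hence $\delta\varphi\equiv 0$. Inserting $\delta\varphi\equiv 0$ back into the difference of \eqref{eq:FD:chns3_solenoidal} reduces it to $(\delta\mu,\Psi)=0$ for every $\Psi\in\mathcal V^1(\mathcal T^{k+1})$, so $\delta\mu\equiv 0$ as well.

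The step I expect to require genuine care is the sign bookkeeping for the two coupling cancellations (between $\mu\nabla\varphi^k$ in the momentum equation and $v\cdot\nabla\varphi^k$ in the Cahn--Hilliard equation, and between the two $\mu,\varphi$ duality terms across \eqref{eq:FD:chns2_solenoidal} and \eqref{eq:FD:chns3_solenoidal}); once these are verified, the argument is an energy identity and, in line with the remark following Theorem \ref{thm:FD:exSol}, requires no smallness assumption on $\tau$ or $h$.
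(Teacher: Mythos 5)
Your proof is correct and takes essentially the same route as the paper: subtract the two solutions, test with the difference velocity, the difference chemical potential, and the difference phase field divided by $\tau$ (exactly as in the discrete energy estimate of Theorem \ref{thm:FD:energyInequality}), exploit the antisymmetry of $a$, the two coupling cancellations and the monotonicity of $F'_+$ to arrive at the same nonnegative-sum identity, and then recover uniqueness of the phase field from the zero-mean property obtained by testing \eqref{eq:FD:chns2_solenoidal} with $\Phi\equiv 1$. The only cosmetic deviation is the final step: the paper tests \eqref{eq:FD:chns3_solenoidal} with $\Psi\equiv 1$ and uses Poincar\'e--Friedrichs to conclude $\delta\mu\equiv 0$, while you substitute $\delta\varphi\equiv 0$ back into the difference of \eqref{eq:FD:chns3_solenoidal} and test with $\delta\mu$; both are immediate.
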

\begin{proof}
Assume there exist two different solutions to
\eqref{eq:FD:chns1_solenoidal}--\eqref{eq:FD:chns3_solenoidal} denoted by $(v^1,\varphi^1,\mu^1)$ and
$(v^2,\varphi^2,\mu^2)$. We show that the difference $v = v^1-v^2,\varphi = \varphi^1-\varphi^2,\mu
= \mu^1-\mu^2$ is zero.

After inserting the two solutions into \eqref{eq:FD:chns1_solenoidal}--\eqref{eq:FD:chns3_solenoidal}
and substracting the two sets of equations we perform the same steps as for the derivation of the
discrete energy estimate, Theorem \ref{thm:FD:energyInequality}, and obtain
\begin{align*}
  0 =& \frac{1}{2}\int_\Omega (\rho^k+\rho^{k-1})v^2\,dx + 2\tau\int_\Omega \eta^k|Dv|^2\,dx \\
  &+\tau \|\sqrt{m^k}\nabla \mu\|^2 + \sigma\epsilon\|\nabla \varphi\|^2
  +\left( F'_+(\varphi^1)-F_+'(\varphi^2),\varphi^1-\varphi^2 \right).
\end{align*}
Since all these terms are non negative we obtain
\begin{align*}
  \frac{1}{2}\int_\Omega (\rho^k+\rho^{k-1})v^2\,dx=&0,&
  \int_\Omega \eta^k|Dv|^2\,dx =&0,\\
  \|\nabla \mu\|^2=&0,&
  \|\nabla \varphi\|^2 =&0.
\end{align*}
Since both $\eta(\cdot)$ and $\rho(\cdot)$ are strictly positive by Assumption \ref{ass:rhoetamob_bounded}
we conclude $\|v\|_{H^1(\Omega)^n}=0$ and thus the uniqueness of the velocity field.

By testing \eqref{eq:FD:chns2_solenoidal} by $\Phi\equiv 1$
we obtain $(\varphi^1,1) = (\varphi^2,1) = (\mathcal{P}^{k+1}\varphi^k,1)$ and thus
$(\varphi^1-\varphi^2,1)=0$. Poincar\'e-Friedrichs inequality then yields
$\|\varphi\|_{H^1(\Omega)}=0$, and thus the uniqueness of the phase field.

Last we directly obtain that the chemical potential is unique up to a constant. By testing
\eqref{eq:FD:chns3_solenoidal} with $\Psi\equiv 1$ and inserting the two solutions we obtain
$(\mu^1-\mu^2,1) = (F_+'(\varphi^1)-F_+'(\varphi^2),1) = 0$ and thus $\|\mu\|_{H^1(\Omega)}=0$,
again by using Poincar\'e-Friedrichs inequality.
\end{proof}

Theorem \ref{thm:FD:energyInequality} estimates the Ginzburg Landau energy of the current phase
field $\varphi^{k+1}$ against the Ginzburg Landau energy of the projection of the old
phase field $\mathcal P^{k+1}(\varphi^k)$. Our aim is to obtain global in time inequalities
estimating the energy of the new phase field against the energy of the old phase field at each time
step. For this purpose let us state an assumption that later will be justified.

\begin{assumption}\label{ass:FP_leq_F}
Let $\varphi^k\in \mathcal{V}^1(\mathcal{T}^{k})$ denote the phase field at time instance $t_k$.
Let $\mathcal P^{k+1}\varphi^k\in \mathcal{V}^1(\mathcal{T}^{k+1})$ denote the projection of
$\varphi^k$ in $\mathcal{V}^1(\mathcal{T}^{k+1})$. We assume that there holds
\begin{align}
  F(\mathcal P^{k+1}\varphi^k)
  + \frac{1}{2}\sigma\epsilon |\nabla \mathcal P^{k+1}\varphi^{k}|^2
  \leq
  F(\varphi^k)
  +\frac{1}{2}\sigma\epsilon |\nabla \varphi^{k}|^2. \label{eq:FP_leq_F}
\end{align}
\end{assumption}
This assumption  means, that the Ginzburg Landau energy is not increasing through projection. Thus
no energy is numerically produced. 

Assumption \ref{ass:FP_leq_F} is in general not fulfilled for arbitrary sequences
$(\mathcal{T}^{k+1})$ of triangulations.
To ensure \eqref{eq:FP_leq_F} we add a post processing step to the adaptive space meshing, see
Section \ref{sec:Adaptivity}.

\begin{theorem}
Assume that for every $k=0,1,\ldots$ Assumption \ref{ass:FP_leq_F} holds.
Then for every $1\leq k < l$ we have
\begin{align*}
  \frac{1}{2}(\rho^{k-1}_hv^k_h,v^k_h) +& \int_\Omega F(\varphi^k_h)\,dx +
  \frac{1}{2}\sigma\epsilon (\nabla \varphi^{k}_h,\nabla \varphi^k_h)
  + \tau\sum_{m=k}^{l-1}(\rho^{m}g,v_h^{m+1})\\
  &\geq
  \frac{1}{2}(\rho^{l-1}v^{l}_h,v^{l}_h) + \int_\Omega F(\varphi^{l}_h)\,dx +
  \frac{1}{2}\sigma\epsilon (\nabla \varphi^{l}_h,\nabla \varphi^{l}_h)\\
  & + \sum_{m=k}^{l-1} (\rho^{m-1}(v^{m+1}_h-v^m_h),(v^{m+1}_h-v^m_h))\\
  & +\tau\sum_{m=k}^{l-1} (2\eta^{m}Dv^{m+1}_h,Dv^{m+1}_h)\\
  & +\tau\sum_{m=k}^{l-1} (m(\varphi^m_h)\nabla \mu ^{m+1}_h,\nabla \mu^{m+1}_h)\\
  & +\frac{1}{2}\sigma\epsilon 
  \sum_{m=k}^{l-1}( \nabla \varphi^{m+1}_h-\nabla \mathcal{P}^{m+1}\varphi^{m}_h,
  \nabla \varphi^{m+1}_h-\nabla \mathcal{P}^{m+1}\varphi^{m}_h).
\end{align*}
\end{theorem}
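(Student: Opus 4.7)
The plan is to apply the one-step energy inequality from Theorem \ref{thm:FD:energyInequality} at each time instance $m = k, k+1, \ldots, l-1$, use Assumption \ref{ass:FP_leq_F} to pass from the projected phase field back to the phase field itself on the right-hand side, and then telescope. I expect the bookkeeping of the kinetic-energy terms to be the point that needs the most care, but it works out cleanly because the two-step scheme uses $\rho^{m-1}$ and $\rho^m$ in exactly the matching way.

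First I would write down, for each fixed $m$ in the range $k \leq m \leq l-1$, the inequality supplied by Theorem \ref{thm:FD:energyInequality}. Rearranged, it reads
\begin{align*}
  \tfrac{1}{2}(\rho^{m} v^{m+1}_h, v^{m+1}_h)
  &+ \tfrac{\sigma\epsilon}{2}\|\nabla \varphi^{m+1}_h\|^2
  + \int_\Omega F(\varphi^{m+1}_h)\,dx \\
  &+ \tfrac{1}{2}(\rho^{m-1}(v^{m+1}_h - v^m_h), v^{m+1}_h - v^m_h) \\
  &+ \tfrac{\sigma\epsilon}{2}\|\nabla \varphi^{m+1}_h - \nabla \mathcal{P}^{m+1}\varphi^m_h\|^2
  + \tau(2\eta^m Dv^{m+1}_h, Dv^{m+1}_h) \\
  &+ \tau(m(\varphi^m_h)\nabla\mu^{m+1}_h, \nabla\mu^{m+1}_h) \\
  &\leq \tfrac{1}{2}(\rho^{m-1} v^m_h, v^m_h)
  + \tfrac{\sigma\epsilon}{2}\|\nabla \mathcal{P}^{m+1}\varphi^m_h\|^2 \\
  &\qquad + \int_\Omega F(\mathcal{P}^{m+1}\varphi^m_h)\,dx
  + \tau(\rho^m g, v^{m+1}_h).
\end{align*}

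Next I would invoke Assumption \ref{ass:FP_leq_F} to replace the last two Ginzburg--Landau terms on the right-hand side by the corresponding quantities evaluated at $\varphi^m_h$ itself, yielding
\begin{align*}
  \tfrac{\sigma\epsilon}{2}\|\nabla \mathcal{P}^{m+1}\varphi^m_h\|^2
  + \int_\Omega F(\mathcal{P}^{m+1}\varphi^m_h)\,dx
  \leq
  \tfrac{\sigma\epsilon}{2}\|\nabla \varphi^m_h\|^2
  + \int_\Omega F(\varphi^m_h)\,dx.
\end{align*}
After this substitution the right-hand side at index $m$ matches (up to the gravitational work term) the first three terms of the left-hand side at index $m-1$: the kinetic part $\tfrac{1}{2}(\rho^{m-1}v^m_h, v^m_h)$ appears verbatim, and likewise for the Ginzburg--Landau energy and $\int F(\varphi^m_h)$.

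Finally I would sum the resulting inequalities from $m=k$ to $m=l-1$. The triple $\bigl(\tfrac{1}{2}(\rho^{m-1}v^m_h,v^m_h), \tfrac{\sigma\epsilon}{2}\|\nabla\varphi^m_h\|^2, \int F(\varphi^m_h)\bigr)$ telescopes, leaving only the $m=k$ values on the right and the $m=l$ values on the left; the dissipation terms, the kinetic-difference term, and the Ginzburg--Landau projection-difference term accumulate into the stated sums over $m$; and the gravity contributions combine into $\tau\sum_{m=k}^{l-1}(\rho^m g, v^{m+1}_h)$. Rearranging gives exactly the inequality claimed. The only delicate point is checking that the kinetic energy telescopes despite the two-step nature of the scheme, but this is immediate from the fact that Theorem \ref{thm:FD:energyInequality} produces $\tfrac{1}{2}\rho^m|v^{m+1}_h|^2$ on the left and $\tfrac{1}{2}\rho^{m-1}|v^m_h|^2$ on the right, which shift consistently under $m \mapsto m+1$.
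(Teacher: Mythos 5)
Your proposal is correct and follows essentially the same route as the paper, whose proof consists of the single remark that the result is immediate from the one-step estimate of Theorem \ref{thm:FD:energyInequality} combined with Assumption \ref{ass:FP_leq_F}; you have merely made the substitution and the telescoping explicit, including the correct observation that the $\rho^{m-1}/\rho^m$ bookkeeping of the two-step scheme is exactly what makes the kinetic energy telescope. One minor remark: since the one-step estimate carries the factor $\tfrac{1}{2}$ in front of $\int_\Omega \rho^{m-1}|v^{m+1}_h-v^m_h|^2\,dx$, the summation actually yields $\tfrac{1}{2}\sum_{m=k}^{l-1}(\rho^{m-1}(v^{m+1}_h-v^m_h),v^{m+1}_h-v^m_h)$, so the absence of the $\tfrac{1}{2}$ in the theorem's statement appears to be a typo in the paper rather than a defect of your argument.
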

\begin{proof}
The stated result is obtained immediately from the energy estimate over one time step
\eqref{thm:FD:energyInequality} together with the Assumption \ref{ass:FP_leq_F}.
\end{proof}

\begin{remark} \label{rm:ProlongationMassDeviation}
We note that using $\Phi=1$ in \eqref{eq:FD:chns2_solenoidal} and using integration by parts only
delivers $(\varphi^{k+1}_h,1) = (\mathcal{P}^{k+1}\varphi^k,1)$ instead of $(\varphi^{k+1}_h,1) =(\varphi^k,1)$. If  we use
the quasi interpolation operator $Q^{k+1}$ introduced by Carstensen in \cite{Carstensen_QuasiInterpolation}
for our generic projection $\mathcal{P}^{k+1}$, we would obtain $(\varphi^{k+1}_h,1)
=(\varphi^k,1)$ since $Q^{k+1}$ preserves the mean value, i.e.
$(\varphi^{k+1}_h,1) =(Q^{k+1}\varphi^k,1)$ $\forall \varphi \in L^1(\Omega)$.

On the other hand if we use Lagrange interpolation $I^{k+1}$ we have
$|(I^{k+1}\varphi^k,1)_T - (\varphi^k,1)_T|  \leq Ch_T^3 \|\varphi^k\|_T$, and the deviation of
$(I^{k+1}\varphi^k,1)$ from $(\varphi^k,1)$ remaines small if we use bisection as refinement
strategy, since then $I^{k+1}\varphi^k \in \mathcal{V}^1(\mathcal{T}^{k+1})$ and
$\varphi^k\in\mathcal{V}^1(\mathcal{T}^{k})$ only differ on coarsened patches.

\end{remark}

\subsection{Existence of a solution to the time discrete system}\label{ssec:ExSol_TD}
Now we have shown that there exists a unique solution to
\eqref{eq:FD:chns1_solenoidal}--\eqref{eq:FD:chns3_solenoidal}. The energy
inequality can be used to obtain uniform bounds on the solution and will be used to obtain a
solution to the time discrete system \eqref{eq:TD:chns1_solenoidal}--\eqref{eq:TD:chns3_solenoidal} by a Galerkin
method.

\begin{theorem}\label{thm:TD:exSol}
Let $v^{k}\in  H(\mbox{div},\Omega)$,  $\varphi^{k-1}\in
H^1(\Omega)$, $\varphi^k\in H^1(\Omega)$, and $\mu^k\in W^{1,q}(\Omega),q>n$ be given data.
Then there exists a weak solution to \eqref{eq:TD:chns1_solenoidal}--\eqref{eq:TD:chns3_solenoidal}.
Moreover,
$\varphi^{k+1}\in H^2(\Omega)$ and $\mu^{k+1}\in H^2(\Omega)$ holds.
\end{theorem}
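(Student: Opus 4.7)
The plan is to construct a weak solution by a Galerkin procedure based on the fully discrete scheme of Section \ref{ssec:ExSol_FD}, and then to recover the $H^2$-regularity by elliptic regularity. I would pick a sequence of quasi-uniform triangulations $\mathcal T_h$ with $h\to 0$ whose finite element spaces are dense in $H^1(\Omega)$ and $H(\mbox{div},\Omega)$, and obtain discrete counterparts of the fixed data $v^k, \varphi^{k-1}, \varphi^k, \mu^k$ converging strongly in the ambient spaces; because $\mathcal P^{k+1}\psi\to\psi$ in $H^1(\Omega)$ for every $\psi$, the projected data used in the fully discrete scheme are asymptotically consistent with the time discrete one. Theorem \ref{thm:FD:exSol} then produces, for each $h$, a discrete solution triple $(v_h,\mu_h,\varphi_h)$.

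Next I would use Theorem \ref{thm:FD:energyInequality} together with Korn's inequality to derive $h$-uniform bounds on $\|v_h\|_{H^1}$, $\|\nabla\varphi_h\|_{L^2}$, and $\|\nabla\mu_h\|_{L^2}$, absorbing the gravity term on the right-hand side by a Cauchy-type estimate. The missing control on the mean values is obtained by testing \eqref{eq:FD:chns2_solenoidal} with $\Phi\equiv 1$ (which fixes the mean of $\varphi_h$) and \eqref{eq:FD:chns3_solenoidal} with $\Psi\equiv 1$ (which expresses the mean of $\mu_h$ through $F_+'(\varphi_h)+F_-'(\mathcal P^{k+1}\varphi^k)$); Assumption \ref{ass:F_boundedPoly} and the Sobolev embedding $H^1\hookrightarrow L^{2(q-1)}$ then yield $\|\varphi_h\|_{H^1}+\|\mu_h\|_{H^1}\le C$.

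Passing to the limit $h\to 0$, Banach--Alaoglu extracts a subsequence with $v_h\rightharpoonup v^{k+1}$, $\varphi_h\rightharpoonup \varphi^{k+1}$, $\mu_h\rightharpoonup \mu^{k+1}$ weakly in the respective $H^1$-type spaces, while Rellich--Kondrachov provides strong $L^r$-convergence and pointwise a.e.\ convergence of $\varphi_h$. The polynomial growth of $F_+'$ (Assumption \ref{ass:F_boundedPoly}) and Vitali's theorem then give $F_+'(\varphi_h)\to F_+'(\varphi^{k+1})$ in $L^{q/(q-1)}(\Omega)$. All remaining terms, including the trilinear form $a(\rho^k v^k+J^k,\cdot,\cdot)$, the coupling $\mu_h\nabla\varphi^k$, and the buoyancy term, are linear in the new unknowns with fixed old-data coefficients, so weak convergence suffices; density of the Galerkin spaces in $H(\mbox{div},\Omega)$ and $H^1(\Omega)$ then implies that the triple $(v^{k+1},\mu^{k+1},\varphi^{k+1})$ satisfies \eqref{eq:TD:chns1_solenoidal}--\eqref{eq:TD:chns3_solenoidal}.

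Finally, rewriting \eqref{eq:TD:chns3_solenoidal} in strong form gives the Neumann problem $-\sigma\epsilon\Delta\varphi^{k+1}=\mu^{k+1}-F_+'(\varphi^{k+1})-F_-'(\varphi^k)$, whose right-hand side lies in $L^2(\Omega)$ by Assumption \ref{ass:F_boundedPoly} together with Sobolev embedding, so $\varphi^{k+1}\in H^2(\Omega)$; the analogous treatment of \eqref{eq:TD:chns2_solenoidal}, using the inductively inherited regularity of $\varphi^k$ and continuity of $m$, gives $\mu^{k+1}\in H^2(\Omega)$. The main obstacle in the whole argument is the passage to the limit in the nonlinear term $F_+'(\varphi_h)$, which forces one to combine strong $L^r$-convergence of $\varphi_h$ with the polynomial growth assumption; once this is in hand, every remaining term is linear and poses no difficulty.
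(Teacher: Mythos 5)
Your overall strategy coincides with the paper's: approximate by the fully discrete scheme on a sequence of meshes with $h\to 0$, invoke Theorem \ref{thm:FD:exSol} for discrete solutions, use the energy inequality of Theorem \ref{thm:FD:energyInequality} (plus testing \eqref{eq:FD:chns2_solenoidal}, \eqref{eq:FD:chns3_solenoidal} with constants, the growth Assumption \ref{ass:F_boundedPoly} and Sobolev embeddings) for $h$-uniform $H^1$ bounds, pass to the limit treating only $F_+'$ as genuinely nonlinear, and finish with elliptic regularity. Up to cosmetic differences (Vitali instead of dominated convergence, discretizing the old data rather than plugging it in directly), this is the paper's proof.

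However, there is one genuine gap: you never show that the limit velocity belongs to $H(\mbox{div},\Omega)$, and your appeal to ``density of the Galerkin spaces in $H(\mbox{div},\Omega)$'' cannot be repaired as stated, because the discrete spaces $H(\mbox{div},\mathcal T^{k+1})$ consist of only \emph{discretely} solenoidal functions: their divergence is merely orthogonal to $\mathcal V^1(\mathcal T^{k+1})\cap L^2_{(0)}(\Omega)$, so they are not subspaces of $H(\mbox{div},\Omega)$ at all, and a weak $H^1_0$-limit of such functions has no reason, a priori, to be weakly solenoidal. This is precisely the step to which the paper devotes a separate argument: for arbitrary $q\in L^2_{(0)}(\Omega)$ it approximates $q$ by discrete pressures $q_l$, exploits that the meshes are \emph{nested} (bisection hierarchy), so that a fixed $q_k$ remains an admissible discrete pressure on all finer meshes and $(\mbox{div}\,v_{l_i},q_k)=0$ for $l_i$ large, and then combines this with weak convergence of $v_{l_i}$ to conclude $(\mbox{div}\,\overline v,q)=0$. (Your quasi-uniform meshes need not be nested, so you would instead have to pair diagonally, $(\mbox{div}\,v_{l},q_{l})=0$ with $q_l\to q$ strongly and $\mbox{div}\,v_l\rightharpoonup\mbox{div}\,\overline v$ weakly.) The same issue reappears on the test-function side of \eqref{eq:TD:chns1_solenoidal}: an arbitrary $w\in H(\mbox{div},\Omega)$ is not approximated in $H^1$ by discretely divergence-free Taylor--Hood functions by ``density''; this requires the LBB/Fortin-type approximation property of the velocity--pressure pair. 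Without these two ingredients the limit object is not known to be an admissible solution of the time-discrete system, so this step must be added to your argument.
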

\begin{proof}
We proceed as follows.
We construct a sequence of meshes $(\mathcal{T}^{k+1}_{l})_{l\to \infty}$ with gridsize $h_l
\stackrel{l\to \infty}{\longrightarrow} 0$. We show that the sequence
$(v^{k+1}_{l},\varphi^{k+1}_{l},\mu^{k+1}_{l})$ of  unique and discrete solutions to
\eqref{eq:FD:chns1_solenoidal}--\eqref{eq:FD:chns3_solenoidal} is bounded
independently of $l$, and thus a weakly convergent subsequence exists which we show to converge to a
weak solution of \eqref{eq:TD:chns1_solenoidal}--\eqref{eq:TD:chns3_solenoidal}.
\bigskip

Let us start with defining the sequence of meshes. Let $\mathcal{T}^{k+1}_0 = \mathcal{T}^{k+1}$
and $\mathcal{T}^{k+1}_{l+1}$, $l=0,1,\ldots$, be obtained from $\mathcal{T}^{k+1}_{l}$ by bisection of
all triangles. The projection onto $\mathcal{T}^{k+1}_l$ we denote by
$\mathcal{P}_l^{k+1}$.

From the discrete energy inequality \eqref{eq:FD:energyInequality} we obtain
\begin{align*}
  \frac{1}{2}\int_\Omega \rho^k\left|v^{k+1}_l\right|^2\,dx
  + \frac{\sigma\epsilon}{2}\int_\Omega |\nabla \varphi^{k+1}_l|^2\,dx
  +\int_\Omega F(\varphi^{k+1}_l)\,dx\\
  + \frac{1}{2}\int_\Omega \rho^{k-1}|v^{k+1}_l-v^k|^2\,dx
  + \frac{\sigma\epsilon}{2}
  \int_\Omega |\nabla \varphi^{k+1}_l-\nabla \mathcal{P}_l^{k+1} \varphi^k|^2\,dx\\
  + \tau\int_\Omega 2\eta^k|Dv^{k+1}_l|^2\,dx
  +\tau\int_\Omega m^k|\nabla \mu^{k+1}_l|^2\,dx \\
  \leq
  \frac{1}{2}\int_\Omega \rho^{k-1}\left|v^{k}\right|^2\,dx
  + \frac{\sigma\epsilon}{2}\int_\Omega |\nabla \mathcal{P}_l^{k+1}\varphi^{k}|^2\,dx
  +\int_\Omega F(\mathcal{P}_l^{k+1}\varphi^{k})\,dx + \tau\int_\Omega \rho^k g v^{k+1}_{l} .
\end{align*}
We have the stability of the projection operator and thus
\begin{align*}
  \int_\Omega |\nabla \mathcal{P}_l^{k+1}\varphi^{k}|^2\,dx
  \leq \|\nabla \varphi^{k}\|_{L^2(\Omega)}^2.
\end{align*}
Due to  Assumption \ref{ass:F_boundedPoly} on $F$  there exists  a
constant $C>0$ such that
\begin{align*}
  \int_\Omega F(\mathcal{P}_l^{k+1}\varphi^k)\,dx
  \leq&  C\int_\Omega |  \mathcal{P}_l^{k+1}\varphi^k|^q + 1 \, dx\\
  \leq & C \left( \|\mathcal{P}_l^{k+1}\varphi^k\|_{L^q(\Omega)}^q +1 \right)\\
  \leq& C  \left(\|\varphi^{k}\|_{L^q(\Omega)}^q + 1\right),
\end{align*}
where we again use the $L^q$-stability of the projection operator  together with the Sobolev
embedding $H^1(\Omega) \hookrightarrow L^q(\Omega)$ with $q$ as in Assumption \ref{ass:F_boundedPoly}.
By using H\"older's inequality and Young's inequality we further have
\begin{align*}
  \tau\int_\Omega \rho^k g v^{k+1}_l\, dx 
  \leq &\tau \left(\int_\Omega \rho^k|g|^2\,dx\right)^{1/2} 
  \left(\int_\Omega \rho^k |v^{k+1}_l|^2\, dx\right)^{1/2}\\
  \leq & \tau^2\int_\Omega \rho^k|g|^2\,dx + \frac{1}{4}\int_\Omega \rho^k |v^{k+1}_l|^2\, dx
\end{align*}

Since $\rho^{k-1}>0$, $\rho^k>0$, $\eta^k>0$, and  $m^k>0$ by Assumption \ref{ass:rhoetamob_bounded}
we obtain that
$\|v^{k+1}_l\|_{H^1(\Omega)}$, $\|\nabla \varphi^{k+1}_l\|$ and $\|\nabla \mu^{k+1}_l\|$ are
uniformly bounded independend of $l$.

By inserting $\Phi \equiv 1$ in \eqref{eq:FD:chns2_solenoidal} we obtain
$(\mathcal{P}_l^{k+1}\varphi^k,1) =(\varphi^{k+1}_l,1)$
and by Poincar\'e-Friedrichs inequality thus
\begin{align*}
  \|\varphi^{k+1}_l\|_{H^1(\Omega)} &\leq
  C\left( \|\nabla \varphi^{k+1}_l\| +(\mathcal P_l^{k+1}\varphi^k,1) \right)\\
  &\leq C \left( \|\nabla \varphi_l^{k+1}\| + \| \mathcal{P}_l^{k+1}\varphi^k\| \right)\\
  &\leq C\left( \|\nabla \varphi_l^{k+1}\| + \|\varphi^k\|  \right).
\end{align*}
Thus $\| \varphi^{k+1}_l\|_{H^1(\Omega)}$  is uniformly bounded.

By inserting $\Psi \equiv 1$ in \eqref{eq:FD:chns3_solenoidal} we obtain
$(\mu^{k+1}_l,1) = (F'_+(\varphi^{k+1}_l)+ F'_-(\mathcal{P}_l^{k+1}\varphi^k),1)$.
Due to the Assumption \ref{ass:F_boundedPoly} on $F'_+$  the first part can be bounded by
$C(\|\varphi^{k+1}_l\|_{L^q(\Omega)}^q+1)$ which is bounded by Sobolev embedding.
Also due to  Assumption \ref{ass:F_boundedPoly} on $F_-$  and due to the $L^q$ stability of
$\mathcal P_l^{k+1}$ the second part can be bounded by $C(\|\varphi^{k}\|_{L^q(\Omega)}^q+1)$.
Thus, by the same arguments as $\|\varphi^{k+1}_l\|_{H^1(\Omega)}$, also
$\|\mu^{k+1}_l\|_{H^1(\Omega)}$ is uniformly bounded.

Consequently there exist $\overline  v\in H^1_0(\Omega)^n, \overline  \varphi\in H^1(\Omega),
\overline \mu\in H^1(\Omega)$ and a subsequence $l_i$ such that
$v^{k+1}_{l_i} \rightharpoonup \overline  v$ in $H^1_0(\Omega)^n$,
$\varphi^{k+1}_{l_i} \rightharpoonup \overline  \varphi$ in $H^1(\Omega)$,
$\mu^{k+1}_{l_i} \rightharpoonup \overline  \mu$  in $H^1(\Omega)$
for $l_i\to \infty$.
\bigskip

We show that this triple of functions indeed is a weak solution to
\eqref{eq:TD:chns1_solenoidal}--\eqref{eq:TD:chns3_solenoidal}.
Inserting the sequence into \eqref{eq:TD:chns1_solenoidal}--\eqref{eq:TD:chns3_solenoidal} yields
\begin{align}
  \frac{1}{2\tau}\int_\Omega \left( \rho^kv^{k+1}_{l_i}-\rho^{k-1}v^{k} \right) w
  +  \rho^{k-1}(v^{k+1}_{l_i}-v^k)w\,dx\nonumber\\
  +a(\rho^kv^k + J^k,v^{k+1}_{l_i},w)
  +\int_{\Omega} 2\eta^k Dv^{k+1}_{l_i}:Dw\,dx\nonumber \\
  -\int_\Omega \mu^{k+1}_{l_i}\nabla \varphi^k
  w + \rho^kgw\,dx &= 0,  \, \forall w \in H(\mbox{div},\Omega)\\
  \tau^{-1}\int_\Omega (\varphi^{k+1}_{l_i}-\varphi^k) \Phi \,dx +
  \int_\Omega(v^{k+1}_{l_i}\cdot \nabla \varphi^k) \Phi \, dx \nonumber\\
  + \int_\Omega m(\varphi^k)\nabla \mu^{k+1}_{l_i}\cdot\nabla \Phi\,dx &=0, \,\forall
  \Phi\in H^1(\Omega)\\
  \sigma \epsilon\int_\Omega\nabla \varphi^{k+1}_{l_i}\cdot\nabla \Psi\,dx
  - \int_\Omega \mu^{k+1}_{l_i}\Psi\,dx\nonumber\\
  +\int_\Omega ((F_+)'(\varphi^{k+1}_{l_i})+(F_-)'(\varphi^k))\Psi\,dx &= 0. \,\forall
  \Psi\in H^1(\Omega).
\end{align}
Now there holds
\begin{align*}
  \frac{1}{2\tau}\int_\Omega  \left(\rho^k + \rho^{k-1}\right) v^{k+1}_{l_i} w\,dx
  &\leq \frac{1}{\tau}\overline \rho \|v_{l_i}^{k+1}\|_{L^2(\Omega)}\|w\|_{L^2(\Omega)}
\end{align*}
and thus
$ \frac{1}{2\tau}\int_\Omega\left(\rho^k + \rho^{k-1}\right)  w \cdot \,dx \in
(H^1_0(\Omega)^n)^*$
yielding
\begin{align*}
  \frac{1}{2\tau}\int_\Omega\left(\rho^k + \rho^{k-1}\right) v^{k+1}_{l_i} w\,dx \to
  \frac{1}{2\tau}\int_\Omega\left(\rho^k + \rho^{k-1}\right) \overline v w\,dx.
\end{align*}

Since $\mu^k\in W^{1,q}(\Omega), q>n$ there holds $J^k\in L^q(\Omega)$ and thus by Sobolev
embedding  we obtain
\begin{align*}
  \left|\int_\Omega \left(\left( \left(\rho^kv^k + J^k\right)\cdot \nabla
  \right)v^{k+1}_{l_i}\right) w\,dx\right|
  \leq& C \|\left(\rho^kv^k + J^k\right)w\|\|\nabla v^{k+1}_{l_i}\|,\\
  \left|\int_\Omega \left(\left( \left(\rho^kv^k + J^k\right)\cdot \nabla
  \right)w\right) v^{k+1}_{l_i}\,dx\right|
  \leq& C \|\left(\left(\rho^kv^k + J^k\right)\nabla\right) w\|_{L^{\frac{2q}{q+2}}}
  \|v^{k+1}_{l_i}\|_{L^{\frac{2q}{q-2}}},
\end{align*}
and thus
$a(\rho^kv^k + J^k,\cdot,w) \in (H^1_0(\Omega)^n)^*$.
This gives
\begin{align*}
  a(\rho^kv^k + J^k,v^{k+1}_{l_i},w) \to a(\rho^kv^k + J^k,\overline v,w)
\end{align*}
The convergence of the remaining terms can be concluded in a similar manner.

Since $\varphi^{k+1}_{l_i} \rightharpoonup \overline \varphi$ in $H^1(\Omega)$ there exists
a subsequence, again denoted by $l_i$ such that $\varphi^{k+1}_{l_i}\to \overline \varphi$
in $L^q(\Omega)$, $q$ as in Assumption \ref{ass:F_boundedPoly}. From Assumption \ref{ass:F_boundedPoly} and the
dominated convergence theorem we thus obtain
\begin{align*}
  \int_\Omega F_+'(\varphi^{k+1}_{l_i})\Psi\,dx \to   \int_\Omega F_+'(\overline \varphi)\Psi\,dx.
\end{align*}

Next we show the weak solenoidality of $\overline v$.
To begin with we note that every $q\in L^2_{(0)}(\Omega)$ can be approximated by
a sequence
$(q_l)_{l\in \mathbb{N}} \subset \mathcal{V}^1(\mathcal T^{k+1}_l)\cap L^2_{(0)}(\Omega)$, so that
for every $\xi > 0$  an index $N_\xi$ exists, such that $\|q-q_l\| \leq \xi$ for $l\geq N_\xi$.
Now we have for arbitrary $q\in L^2_{(0)}(\Omega)$
\begin{align*}
  |(\mbox{div }\overline v,q)|
  &\leq | (\mbox{div }\overline v,q-q_l) |
  + | (\mbox{div } \overline v - \mbox{div }v_{l_i},q_l)|
  + |(\mbox{div }v_{l_i},q_l)|.
\end{align*}
Let $\xi>0$ be given.
For the first addend we have
$| (\mbox{div }\overline v,q-q_l) | \leq \|\mbox{div }\overline v\| \|q-q_l\| \leq C\xi$ for
$l\geq N_\xi$.

Since the sequence $q_l$ is defined on the same hierarchy of meshes as $v_{l}$ we may restrict $q_l$
to the subsequence $l_i$ and obtain that both $q_{l_i}$ and $v_{l_i}$ are defined on the same
meshes.
We set $k := \min\{l_i\,|\,l_i\geq N_\xi\}$.
Now  we have $(\mbox{div }v_{l_i},q_{k})=0$ for $l_i\geq k$, since then $q_k\in
\mathcal{V}^1(\mathcal{T}^{k+1}_{l_i})$,
i.e. the third addend vanishes.
By choosing $l_i$ so large that
$| (\mbox{div } \overline v - \mbox{div }v_{l_i},q_{k})| \leq C\xi$ holds by weak convergence of
$v_{l_i}$, the weak solenoidality of $\overline v$ is shown, since $\xi>0$ is chosen arbitrarily.

Thus the triple $\overline v,\overline \varphi,\overline \mu$ indeed is a weak solution.

\bigskip

It remains to obtain the stated higher regularity for  $\mu^{k+1}$ and $\varphi^{k+1}$.
This directly follows by regularity results for the Laplacian, see \cite[Thm. 3.10]{ErnGuermond}.
Since $\mu^{k+1}-F'_+(\varphi^{k+1})- F'_-(\varphi^k) \in L^2(\Omega)$
it follows that $\varphi^{k+1} \in H^2(\Omega)$ and thus, since
we have $\tau^{-1}(\varphi^{k+1}-\varphi^k)+v^{k+1}\nabla \varphi^k \in L^2(\Omega)$,
also $\mu^{k+1}\in H^2(\Omega)$.
\end{proof}

The uniqueness of the solution follows by the same steps as the uniqueness of the discrete
solutions, see Theorem \ref{thm:FD:uniqueSolution}.
Like the fully discrete scheme, also the
time-discrete scheme fulfills an energy inequality.

\begin{theorem}\label{thm:TD:energyInequality}
Let $\varphi^{k+1},\mu^{k+1},v^{k+1}$ be a solution to
\eqref{eq:TD:chns1_solenoidal}--\eqref{eq:TD:chns3_solenoidal}. Then the following energy inequality holds.
\begin{align*}
  \frac{1}{2}\int_\Omega \rho^k\left|v^{k+1}\right|^2\,dx
  + \frac{\sigma\epsilon}{2}\int_\Omega |\nabla \varphi^{k+1}|^2\,dx
  +\int_\Omega F(\varphi^{k+1})\,dx\nonumber\\
  + \frac{1}{2}\int_\Omega \rho^{k-1}|v^{k+1}-v^k|^2\,dx
  + \frac{\sigma\epsilon}{2}\int_\Omega |\nabla \varphi^{k+1}-\nabla \varphi^k|^2\,dx\nonumber\\
  + \tau\int_\Omega 2\eta^k|Dv^{k+1}|^2\,dx
  +\tau\int_\Omega m^k|\nabla \mu^{k+1}|^2\,dx \nonumber\\
  \leq
  \frac{1}{2}\int_\Omega \rho^{k-1}\left|v^{k}\right|^2\,dx
  + \frac{\sigma\epsilon}{2}\int_\Omega |\nabla \varphi^{k}|^2\,dx
  +\int_\Omega F(\varphi^{k})\,dx. + \int_\Omega \rho^kg v^{k+1}
\end{align*}
\end{theorem}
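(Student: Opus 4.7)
The plan is to mirror exactly the proof of Theorem \ref{thm:FD:energyInequality}, but at the time-discrete level, which is strictly simpler because there is no spatial projection operator $\mathcal{P}^{k+1}$ to track: every instance of $\mathcal{P}^{k+1}\varphi^k$ in that earlier proof is replaced by $\varphi^k$, and the rest of the algebra is identical. Concretely, I would test \eqref{eq:TD:chns1_solenoidal} with $w \equiv v^{k+1}$, test \eqref{eq:TD:chns2_solenoidal} with $\Phi \equiv \mu^{k+1}$, and test \eqref{eq:TD:chns3_solenoidal} with $\Psi \equiv (\varphi^{k+1}-\varphi^k)/\tau$; the choices are admissible since $v^{k+1}\in H(\mbox{div},\Omega)$ and $\mu^{k+1},\varphi^{k+1}\in H^1(\Omega)$. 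Then I would add the three resulting identities.

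The required algebraic identities are the same three used in the proof of Theorem \ref{thm:FD:energyInequality}. First, the discrete kinetic energy identity
\begin{align*}
\tfrac{1}{2}(\rho^k v^{k+1}-\rho^{k-1}v^k)\cdot v^{k+1} + \tfrac{1}{2}\rho^{k-1}(v^{k+1}-v^k)\cdot v^{k+1}
= \tfrac{1}{2}\rho^k|v^{k+1}|^2 + \tfrac{1}{2}\rho^{k-1}|v^{k+1}-v^k|^2 - \tfrac{1}{2}\rho^{k-1}|v^k|^2,
\end{align*}
which comes from the particular convex combination chosen in the scheme and handles the density change in time. Second, the polarization identity
$\nabla\varphi^{k+1}\cdot(\nabla\varphi^{k+1}-\nabla\varphi^k) = \tfrac{1}{2}|\nabla\varphi^{k+1}|^2 - \tfrac{1}{2}|\nabla\varphi^k|^2 + \tfrac{1}{2}|\nabla\varphi^{k+1}-\nabla\varphi^k|^2$, obtained after multiplying \eqref{eq:TD:chns3_solenoidal} by $(\varphi^{k+1}-\varphi^k)/\tau$ and using integration by parts on the gradient term. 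Third, the convex/concave splitting inequalities $F_+(\varphi^{k+1})-F_+(\varphi^k)\leq F'_+(\varphi^{k+1})(\varphi^{k+1}-\varphi^k)$ and $F_-(\varphi^{k+1})-F_-(\varphi^k)\leq F'_-(\varphi^k)(\varphi^{k+1}-\varphi^k)$, whose sum bounds $F(\varphi^{k+1})-F(\varphi^k)$ from above by the mixed-time derivative appearing in \eqref{eq:TD:chns3_solenoidal}.

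With these in hand, the crucial cancellations are: the convective trilinear term $a(\rho^k v^k+J^k,v^{k+1},v^{k+1})$ vanishes by the anti-symmetry $a(u,w,w)=0$ noted after \eqref{eq:trilinearForm}; the coupling terms $-(\mu^{k+1}\nabla\varphi^k,v^{k+1})$ from \eqref{eq:TD:chns1_solenoidal} and $(v^{k+1}\nabla\varphi^k,\mu^{k+1})$ from \eqref{eq:TD:chns2_solenoidal} cancel exactly; and the terms $\tau^{-1}(\varphi^{k+1}-\varphi^k,\mu^{k+1})$ from \eqref{eq:TD:chns2_solenoidal} and $-\tau^{-1}(\mu^{k+1},\varphi^{k+1}-\varphi^k)$ from \eqref{eq:TD:chns3_solenoidal} cancel as well. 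What survives is precisely the claimed inequality after multiplying through by $\tau$.

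There is no real obstacle here: the key structural work was already done when the scheme was designed (the convex combination in the momentum equation, the anti-symmetric $a$, the convex/concave splitting of $F$, the $\varphi^k$ in $v^{k+1}\cdot\nabla\varphi^k$ matching the $\mu^{k+1}\nabla\varphi^k$ forcing term). The only place one must be slightly careful is the sign and factor of $1/\tau$ when testing \eqref{eq:TD:chns3_solenoidal}, and making sure the gravity contribution $\int_\Omega \rho^k g\, v^{k+1}\,dx$ lands on the correct side of the inequality; beyond that the proof is a line-by-line transcription of the proof of Theorem \ref{thm:FD:energyInequality} with $\mathcal{P}^{k+1}\varphi^k$ replaced by $\varphi^k$.
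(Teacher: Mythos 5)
Your proposal is correct and is precisely the paper's own argument: the paper's proof likewise tests \eqref{eq:TD:chns1_solenoidal} with $v^{k+1}$, \eqref{eq:TD:chns2_solenoidal} with $\mu^{k+1}$, and \eqref{eq:TD:chns3_solenoidal} with $(\varphi^{k+1}-\varphi^k)/\tau$, then reuses the identities and convexity estimates from Theorem \ref{thm:FD:energyInequality} with $\mathcal{P}^{k+1}\varphi^k$ replaced by $\varphi^k$. You have simply written out explicitly the cancellations and inequalities that the paper invokes by reference.
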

\begin{proof}

The inequality is obtaind from testing \eqref{eq:TD:chns1_solenoidal} with $v^{k+1}$,
\eqref{eq:TD:chns2_solenoidal} with $\mu^{k+1}$,
\eqref{eq:TD:chns3_solenoidal} with $(\varphi^{k+1} -\varphi^k)/\tau$ and using the same arguments
as in the proof for Theorem \ref{thm:FD:energyInequality}.
\end{proof}

\begin{remark}
Let $F$ denote the relaxed double-obstacle free energy introduced in Remark~\ref{rm:freeEnergies},
with relaxation parameter $\hp$. Let $(v_\hp,\varphi_\hp,\mu_\hp)_{\hp\in  \mathbb{R}}$ denote the
sequence of solutions of \eqref{eq:TD:chns1_solenoidal}--\eqref{eq:TD:chns3_solenoidal} for a
sequence $(\hp_l)_{l\in\mathbb{N}}$. 
From the linearity of \eqref{eq:TD:chns1_solenoidal} and
\cite[Prop. 4.2]{HintermuellerHinzeTber} it follows, that there exists a subsequence, still denoted
by $(v_\hp,\varphi_\hp,\mu_\hp)_{\hp\in  \mathbb{R}}$, such that
\begin{align*}
  (v_\hp,\varphi_\hp,\mu_\hp)_{\hp\in  \mathbb{R}} 
  \to (v^*,\varphi^*,\mu^*)\quad \mbox{ in }  H^1(\Omega),
\end{align*}
where $(v^*,\varphi^*,\mu^*)$ denotes the solution of 
\eqref{eq:TD:chns1_solenoidal}--\eqref{eq:TD:chns3_solenoidal},
where $F^{obst}$, denoted in Remark~\ref{rm:freeEnergies}, is chosen as free energy. Especially
$|\varphi^*|\leq 1$ holds. In the following argumentation we concentrate on the phase field only.
From the regularity $\varphi_\hp\in H^2(\Omega)$ 
together with a-priori estimates on the solution of the Poisson problem
and the energy inequality of Theorem \ref{thm:TD:energyInequality}, 
we obtain the existence of a strongly convergent subsequence 
$\varphi_{\hp'}\to \varphi^*$ in $C^{0,\alpha}(\overline \Omega)$, where we use the compact
embedding $H^2(\Omega) \hookrightarrow C^{0,\alpha}(\overline \Omega)$ for $2\alpha < 4-n$.

Thus for $\hp$ large enough we have $|\varphi_\hp|\leq 1 +\theta$ with $\theta$ arbitrarily small. 
Currently we are not able to quantify how large $\hp$ has to be chosen in dependence of $\theta$
to guarantee this bound.
Therefore we use the cut-off procedure described before Remark \ref{rm:freeEnergies}.
\end{remark}

%%%%%%%%%%%%%%%%%%%%%%%%%%%%%%%%%%%%%%%%%%%%%%%%%%%%%%%%%%%%%%%%%%%%%%%%%%%%%%%%%%%
%%%%%%%%%%%%%%%%%%%%%%%%%%%%%%%%%%%%%%%%%%%%%%%%%%%%%%%%%%%%%%%%%%%%%%%%%%%%%%%%%%%
%%%%%%%%%%%%%%%%%%%%%%%%%%%%%%%%%%%%%%%%%%%%%%%%%%%%%%%%%%%%%%%%%%%%%%%%%%%%%%%%%%%
%%%%%%%%%%%%%%%%%%%%%%%%%%%%%%%%%%%%%%%%%%%%%%%%%%%%%%%%%%%%%%%%%%%%%%%%%%%%%%%%%%%
%Here we state the adaptive concept and the adaptivity cycle together with the postprocessing
%labels with AD
\section{The A-Posteriori Error Estimation}\label{sec:Adaptivity}
For an efficient solution of \eqref{eq:FD:chns1_solenoidal}--\eqref{eq:FD:chns3_solenoidal} we next describe
an a-posteriori error estimator based mesh refinement scheme that is reliable and efficient up to terms
of higher order and errors introduced by the projection. We also describe how
Assumption \ref{ass:FP_leq_F} on the evolution  of the free energy, given  in
\eqref{eq:FD:energyInequality}, under projection  is fulfilled in the discrete setting.

Let us briefly comment on available adaptative concepts for the spatial discretization of
Cahn--Hilliard Navier--Stokes systems. Heuristic approaches exploiting knowledge of the location
of the diffuse interface can be found in
\cite{KayStylesWelford,blank_solvingCHwithNewton,Aland_Voigt_bubble_benchmark,Gruen_Klingbeil_CHNS_AGG_numeric}.
In \cite{HintermuellerHinzeKahle_adaptiveCHNS} a fully adaptive, reliable and efficient, residual
based error estimator for the Cahn--Hilliard part in the Cahn--Hilliard Navier--Stokes system is
proposed, which extends the results of \cite{HintermuellerHinzeTber} for Cahn--Hilliard to
Cahn--Hilliard Navier--Stokes systems with Moreau--Yosida relaxation of the double-obstacle free
energy.
A residual based error estimator for Cahn--Hilliard systems with double-obstacle free energy is
proposed in  \cite{BanasNurnbergAPosteriori}.

In the present section we propose an all-in-one adaptation concept for the fully coupled
Cahn--Hilliard Navier--Stokes system, where we exploit the energy inequality of Theorem
\ref{thm:FD:energyInequality}. To the best of the author's knowledge this is the first contribution
to the fully adaptive treatment of the fully coupled Cahn--Hilliard Navier--Stokes
system.

\subsection*{The fully discrete system used in the numerical realization}
Since in our numerical realization we do not include the solenoidality of the velocity field $v$
into the discrete Ansatz space we now introduce a  weak formulation for the time discrete version
of \eqref{eq:CHNS1_weak}--\eqref{eq:CHNS3_weak} in primitive variables, which by
\cite{GiraultRaviart_FEM_for_NavierStokes} is equivalent to
\eqref{eq:TD:chns1_solenoidal}--\eqref{eq:TD:chns3_solenoidal}:\\
For $k\geq 1$, given $\varphi^{k-1}\in H^1(\Omega)$, $\varphi^k\in H^1(\Omega)$, $\mu^k \in
W^{1,q}(\Omega),q>n$, $v^k \in H^1_0(\Omega)^n$ find
$v^{k+1} \in H^1_0(\Omega)^n$, $p^{k+1}\in L^2_{(0)}(\Omega)$, $\varphi^{k+1}\in H^1(\Omega)$, and
$\mu^{k+1}\in H^1(\Omega)$ satisfying
\begin{align}
  \frac{1}{2\tau}( \rho^kv^{k+1}-\rho^{k-1}v^{k} +  \rho^{k-1}(v^{k+1}-v^k),w)
  &\nonumber\\
  +a(\rho^k v^k+J^k,v^{k+1},w)+(2\eta Dv^{k+1}:Dw)&\nonumber\\
  -(p^{k+1},\mbox{div}(w)) - (\mu^{k+1}\nabla \varphi^k + \rho^k g, w) &= 0
  &&\forall w \in H^1_0(\Omega)^n,\label{eq:AD:chns1_timeDisc}\\
  -(\mbox{div}(v^{k+1}),q)&= 0 &&\forall q\in L^2_{(0)}(\Omega),\label{eq:AD:chns2_timeDisc}\\
  \frac{1}{\tau}(\varphi^{k+1}-\varphi^k, \Phi) +
  (v^{k+1}\cdot \nabla\varphi^k, \Phi)
  +  (m(\varphi^k)\nabla \mu^{k+1},\nabla \Phi) &=0 &&\forall
  \Phi\in H^1(\Omega),\label{eq:AD:chns3_timeDisc}\\
  \sigma \epsilon(\nabla \varphi^{k+1},\nabla \Psi)
  +((F_+)'(\varphi^{k+1})+(F_-)'(\varphi^k),\Psi)- (\mu^{k+1},\Psi) &= 0&&\forall
  \Psi\in H^1(\Omega).\label{eq:AD:chns4_timeDisc}
\end{align}
The corresponding fully discrete system now reads:\\
For $k\geq 1$, given
$\varphi^{k-1}\in H^1(\Omega)$,
$\varphi^k\in H^1(\Omega)$,
$\mu^k \in W^{1,q}(\Omega),q>n$,
$v^k \in H^1_0(\Omega)^n$
find
$v^{k+1}_h \in  \mathcal{V}^2(\mathcal{T}^{k+1})$,
$p_h^{k+1}\in \mathcal V^1(\mathcal T^{k+1}),\, \int_\Omega p_h^{k+1}\,dx= 0$,
$\varphi^{k+1}_h\in \mathcal V^1(\mathcal  T^{k+1})$,
$\mu^{k+1}_h \in \mathcal V^1(\mathcal T^{k+1})$
such that for all
$w \in \mathcal V^2(\mathcal T^{k+1})$,
$q \in \mathcal V^1(\mathcal T^{k+1})$,
$\Phi \in \mathcal V^1(\mathcal T^{k+1})$,
$\Psi \in \mathcal V^1(\mathcal T^{k+1})$
there holds:
\begin{align}
  \frac{1}{2\tau}(\rho^{k}v^{k+1}_h-\rho^{k-1}v^k+\rho^{k-1}(v^{k+1}_h-v^k),w)
  + a(\rho^kv^k+J^k,v_h^{k+1},w)\nonumber\\
  +(2\eta^kDv^{k+1}_h,\nabla w)-(\mu^{k+1}_h\nabla\varphi^{k} + \rho^k g,w) - (p_h^{k+1},\mbox{div}
  w) &= 0,\label{eq:AD:chns1_fullDisc}\\
  - (\mbox{div} v_h^{k+1},q) &= 0,\label{eq:AD:chns2_fullDisc}\\
  \frac{1}{\tau}(\varphi^{k+1}_h-\mathcal{P}^{k+1}\varphi^k,\Phi)+(m(\varphi^k)\nabla
  \mu^{k+1}_h,\nabla \Phi) -(v^{k+1}_h \varphi^k,\nabla \Phi)
  &=0,\label{eq:AD:chns3_fullDisc}\\
  \sigma\epsilon(\nabla \varphi^{k+1}_h,\nabla
  \Psi)+(F_+^\prime(\varphi^{k+1}_h)+F^\prime_-(\mathcal{P}^{k+1}\varphi^k),\Psi)-(\mu^{k+1}_h,\Psi)
  &=0. \label{eq:AD:chns4_fullDisc}
\end{align}

Thus we use the famous Taylor--Hood LBB-stable $P2-P1$ finite
element for the discretization of the velocity - pressure field and piecewise linear and continuous
finite elements for the discretization of the phase field and the chemical potential. For other kinds of possible
discretizations of the velocity-pressure field we refer to e.g.
\cite{Verfuerth_aPosteriori_new}.

Note that we perform integration by parts in \eqref{eq:AD:chns3_fullDisc} in the transport term. As
soon as $\mathcal{P}^{k+1}$ is a mass conservened projection we by testing equation
\eqref{eq:AD:chns3_fullDisc} with $\Phi = 1$ obtain the conservation of mass in the fully discrete scheme.

The link between equations
\eqref{eq:AD:chns1_fullDisc}--\eqref{eq:AD:chns4_fullDisc} and
\eqref{eq:FD:chns1_solenoidal}--\eqref{eq:FD:chns3_solenoidal} is provided by the next theorem.

\begin{theorem}
Let $v^{k+1}_h,\varphi^{k+1}_h,\mu^{k+1}_h$ denote the unique  solution to
\eqref{eq:FD:chns1_solenoidal}--\eqref{eq:FD:chns3_solenoidal}. Then there exists a unique pressure
$p_h^{k+1} \in \mathcal{V}^1(\mathcal{T}^{k+1}),\, \int_\Omega p_h^{k+1}\,dx= 0$ such that
$(v^{k+1}_h,p_h^{k+1},\varphi_h^{k+1},\mu_h^{k+1})$ is a solution to \eqref{eq:AD:chns1_fullDisc}--\eqref{eq:AD:chns4_fullDisc}.
 The opposite direction is obvious.
\end{theorem}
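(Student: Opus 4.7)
The plan is to exploit the equivalence between the mixed and the divergence-constrained formulations, which is provided by the discrete inf-sup (LBB) condition for the Taylor--Hood pair.

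Given the solution $(v^{k+1}_h, \varphi^{k+1}_h, \mu^{k+1}_h)$ of \eqref{eq:FD:chns1_solenoidal}--\eqref{eq:FD:chns3_solenoidal}, I would first observe that three of the four equations \eqref{eq:AD:chns1_fullDisc}--\eqref{eq:AD:chns4_fullDisc} are automatically satisfied irrespective of the choice of $p_h^{k+1}$: the discrete incompressibility \eqref{eq:AD:chns2_fullDisc} is nothing but the defining property of $H(\mbox{div},\mathcal{T}^{k+1})$ applied to $v_h^{k+1}$, while \eqref{eq:AD:chns3_fullDisc} and \eqref{eq:AD:chns4_fullDisc} coincide with \eqref{eq:FD:chns2_solenoidal} and \eqref{eq:FD:chns3_solenoidal} respectively, after an integration by parts in the transport term which is justified by the weak solenoidality of $v_h^{k+1}$ together with its vanishing boundary trace. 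Only the construction of $p_h^{k+1}$ verifying \eqref{eq:AD:chns1_fullDisc} remains.

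To this end, I would collect the non-pressure contributions of \eqref{eq:AD:chns1_fullDisc}, evaluated at the known triple $(v^{k+1}_h, \varphi^{k+1}_h, \mu^{k+1}_h)$, into a single linear functional $\ell:\mathcal{V}^2(\mathcal{T}^{k+1})^n\to\mathbb{R}$,
\begin{align*}
\ell(w) := & \tfrac{1}{2\tau}(\rho^k v_h^{k+1} - \rho^{k-1}v^k + \rho^{k-1}(v_h^{k+1}-v^k), w) \\
& + a(\rho^k v^k + J^k, v_h^{k+1}, w) + (2\eta^k D v_h^{k+1}, \nabla w) \\
& - (\mu_h^{k+1}\nabla \varphi^k + \rho^k g, w).
\end{align*}
Equation \eqref{eq:FD:chns1_solenoidal} then reads precisely $\ell|_{H(\mbox{div},\mathcal{T}^{k+1})} \equiv 0$, i.e.\ $\ell$ lies in the annihilator of the discretely solenoidal subspace.

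The crux is to invoke the discrete inf-sup condition for the $P^2$--$P^1$ Taylor--Hood pair, which is classical and can be cited from \cite{GiraultRaviart_FEM_for_NavierStokes}. As a standard consequence, the bilinear form $(q,\mbox{div } w)$ realises an isomorphism between $\mathcal{V}^1(\mathcal{T}^{k+1}) \cap L^2_{(0)}(\Omega)$ and the annihilator of $H(\mbox{div},\mathcal{T}^{k+1})$ inside $(\mathcal{V}^2(\mathcal{T}^{k+1})^n)^*$. Applying this representation to $\ell$ produces exactly one $p_h^{k+1} \in \mathcal{V}^1(\mathcal{T}^{k+1})$ with zero mean such that $\ell(w) = (p_h^{k+1},\mbox{div } w)$ for every $w \in \mathcal{V}^2(\mathcal{T}^{k+1})^n$, which is \eqref{eq:AD:chns1_fullDisc}; uniqueness is built into the isomorphism. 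I do not expect any serious obstacle, since once the reduction to the LBB statement is made the argument is essentially textbook; the only item worth double-checking with care is the matching of the convective term between \eqref{eq:FD:chns2_solenoidal} and \eqref{eq:AD:chns3_fullDisc}, and the fact that all test functions in the mixed formulation are implicitly taken with homogeneous Dirichlet trace so that they belong to the same ambient space used to define $H(\mbox{div},\mathcal{T}^{k+1})$.
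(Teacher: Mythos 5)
Your proposal is correct and takes essentially the same route as the paper: the paper's entire proof is the one-line observation that the Taylor--Hood pair is LBB-stable together with a citation of \cite[Thm. II 1.1]{GiraultRaviart_FEM_for_NavierStokes}, which is precisely the inf-sup representation result you invoke to produce the unique zero-mean pressure from the functional $\ell$ annihilating $H(\mbox{div},\mathcal{T}^{k+1})$. Your version simply spells out what that citation encapsulates (including the reduction of the remaining three equations), and the subtlety you flag about the transport term is likewise passed over silently by the paper.
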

\begin{proof}
Since we use LBB-stable finite elements, from
\cite[Thm. II 1.1]{GiraultRaviart_FEM_for_NavierStokes} we obtain the stated result.
\end{proof}

\subsection*{Derivation of the error estimator}
We begin with noting that the special structure of our time discretization gives rise to an error
estimator which both estimates the error in the approximation of the velocity, and in the
approximation of the phase field and the chemical potential. We are not able to estimate the error in the approximation of the
pressure field and the estimator will only be reliable and efficient up to higher order terms.

In the derivation of  the estimator we follow \cite{HintermuellerHinzeTber} and restrict the
presentation of its construction to the main steps.

We define the following error terms:
\begin{align}
  e_v := &v^{k+1}_h - v^{k+1},&
  e_p := &p^{k+1}_h - p^{k+1},\label{eq:AD:err1}\\
  e_\varphi := &\varphi^{k+1}_h - \varphi^{k+1},&
  e_\mu := &\mu^{k+1}_h - \mu^{k+1},\label{eq:AD:err2}
\end{align}
as well as the discrete element residuals
\begin{align*}
  r^{(1)}_h :=&
  \frac{\rho^{k}+\rho^{k-1}}{2}v^{k+1}_h   - \rho^{k-1}v^k
  + \tau (b^k\nabla)v^{k+1}_h
  +\frac{1}{2}\tau \mbox{div}(b^k)v^{k+1}_h \\
  &-2\tau \mbox{div}\left(\eta^kDv^{k+1}_h\right)
  + \tau \nabla p^{k+1}_h
  - \tau \mu^{k+1}_h\nabla \varphi^k -  \rho^k g,\\
  r^{(2)}_h := & \varphi^{k+1}_h-\mathcal{P}^{k+1}\varphi^k + \tau v^{k+1}_h\nabla \varphi^k
  - \tau \mbox{div}(m^k\nabla \mu^{k+1}_h),\\
  r^{(3)}_h  := & F'_+(\varphi^{k+1}_h) + F'_-(\mathcal{P}^{k+1}\varphi^k) - \mu^{k+1}_h,
\end{align*}
where $b^k := \rho^kv^k+J^k $.
Furthermore we define the error indicators
\begin{equation}
  \begin{aligned}
\eta_T^{(1)} :=& h_T\|r^{(1)}_h\|_T, &
\eta_E^{(1)} :=& h_E^{1/2}\|2\eta^k \left[Dv^{k+1}_h\right]_\nu \|_E,\\
\eta_T^{(2)} := & h_T\|r^{(2)}_h\|_T, &
\eta_E^{(2)} := & h_E^{1/2}\|m^k\left[\nabla \mu^{k+1}_h\right]_\nu\|_E, \\
\eta_T^{(3)} := & h_T\|r^{(3)}_h\|_T, &
\eta_E^{(3)} := & h_E^{1/2}\|\left[\nabla \varphi^{k+1}_h\right]_\nu\|_E.
\end{aligned}
\label{eq:AD:etaT123_etaTE123}
\end{equation}
Here $\left[\cdot\right]_\nu$  denotes the jump of a discontinuous function in normal direction
$\nu$ pointing from the triangle with lower global number to the triangle with higher global
number. Thus $\eta_E^{(j)}$, $j=1,2,3$ measures  the jump of the corresponding variable
across the edge $E$, while $\eta_T^{(j)}$, $j=1,2,3$ measures the triangle wise residuals.

By $\Pi_h:H^1(\Omega)\to \mathcal{V}^j(\mathcal{T}^{k+1})$, $j=1,2$ we denote Cl\'ement's
interpolation operator \cite{Clement_Interpolation}, which satisfies for each triangle $T\in \mathcal{T}^{k+1}$ and each
edge $E\in \mathcal{E}^{k+1}$ the approximation estimates
\begin{align}
  \| v-\Pi_hv\|_T \leq & Ch_T\|\nabla v\|_{\omega_T} &&\forall v \in H^1(\Omega),
  \label{eq:AD:Clement1}\\
  \mbox{ and }\| v-\Pi_hv\|_E \leq & Ch_E^{1/2}\|\nabla v\|_{\omega_E} &&\forall v \in H^1(\Omega),
  \label{eq:AD:Clement2}
\end{align}
where $C$ is a generic positive constant and $\omega_T$ and $\omega_E$ are given by
\begin{align*}
  \omega_T := &
  \{ T' \in \mathcal{T}^{k+1}\, : \, \overline T \cap \overline{T'} \not = \emptyset\},\\
  \omega_E := &
  \{ T \in \mathcal{T}^{k+1}\, : \, E \subset \overline{T}\}.
\end{align*}
Subsequently it is clear whether $\Pi_h$ maps to $\mathcal{V}^1$
or to $\mathcal V^2$. We therefore do not introduce further supscripts to distinguish these two
cases.

In the following we write equations \eqref{eq:AD:chns1_timeDisc}--\eqref{eq:AD:chns4_timeDisc} as
\begin{align*}
  F^1( (v^{k+1},p^{k+1},\varphi^{k+1},\mu^{k+1}),w) &= 0,&
  F^2( (v^{k+1},p^{k+1},\varphi^{k+1},\mu^{k+1}),q) &= 0,\\
  F^3( (v^{k+1},p^{k+1},\varphi^{k+1},\mu^{k+1}), \Phi) &= 0,&
  F^4( (v^{k+1},p^{k+1},\varphi^{k+1},\mu^{k+1}), \Psi) &= 0,
\end{align*}
and analogously \eqref{eq:AD:chns1_fullDisc}--\eqref{eq:AD:chns4_fullDisc} as
\begin{align*}
  F^1_h( (v_h^{k+1},p_h^{k+1},\varphi_h^{k+1},\mu_h^{k+1}),w) &= 0,&
  F^2_h( (v_h^{k+1},p_h^{k+1},\varphi_h^{k+1},\mu_h^{k+1}),q) &= 0,\\
  F^3_h( (v_h^{k+1},p_h^{k+1},\varphi_h^{k+1},\mu_h^{k+1}), \Phi) &= 0,&
  F^4_h( (v_h^{k+1},p_h^{k+1},\varphi_h^{k+1},\mu_h^{k+1}), \Psi) &= 0.
\end{align*}

Since the error functions defined in \eqref{eq:AD:err1}--\eqref{eq:AD:err2} are valid test
functions for the system \eqref{eq:AD:chns1_timeDisc}--\eqref{eq:AD:chns4_timeDisc} we have
\begin{align*}
  F^1( (v^{k+1},p^{k+1},\varphi^{k+1},\mu^{k+1}), \tau e_v) &= 0,&
  F^2( (v^{k+1},p^{k+1},\varphi^{k+1},\mu^{k+1}), \tau e_p) &= 0,\\
  F^3( (v^{k+1},p^{k+1},\varphi^{k+1},\mu^{k+1}), \tau e_\mu) &= 0,&
  F^4( (v^{k+1},p^{k+1},\varphi^{k+1},\mu^{k+1}), e_\varphi) &= 0.
\end{align*}
Thus
\begin{align*}
  &F_h^1( (v^{k+1}_h,p^{k+1}_h,\varphi^{k+1}_h,\mu^{k+1}_h), \tau e_v)\\
  &=F_h^1( (v^{k+1}_h,p^{k+1}_h,\varphi^{k+1}_h,\mu^{k+1}_h), \tau e_v)
  - F^1( (v^{k+1},p^{k+1},\varphi^{k+1},\mu^{k+1}), \tau e_v).
\end{align*}
For $F_h^2,F_h^3,F_h^4$ similar expressions hold.\\
Summing up these expressions yields
\begin{align*}
  &\frac{1}{2}((\rho^k+\rho^{k-1})e_v,e_v) + 2\tau (\eta^kDe_v,De_v)\\
  &+\tau(m^k\nabla e_\mu,\nabla e_\mu) + \sigma\epsilon\|\nabla e_\varphi\|^2
  +(F'_+(\varphi^{k+1}_h)-F'_+(\varphi^{k+1}),e_\varphi)\\
  =&
  F_h^1( (v^{k+1}_h,p^{k+1}_h,\varphi^{k+1}_h,\mu^{k+1}_h), \tau e_v)
  \\
  &+F_h^2( (v^{k+1}_h,p^{k+1}_h,\varphi^{k+1}_h,\mu^{k+1}_h), \tau e_p)
  \\
  &+F_h^3( (v^{k+1}_h,p^{k+1}_h,\varphi^{k+1}_h,\mu^{k+1}_h), \tau e_\mu)
  \\
  &+F_h^4( (v^{k+1}_h,p^{k+1}_h,\varphi^{k+1}_h,\mu^{k+1}_h), \tau e_\varphi)
  \\
  &+(\mathcal{P}^{k+1}\varphi^k-\varphi^k,e_\mu) - (F'_-(\mathcal{P}^{k+1}\varphi^k) -
  F'_-(\varphi^k),e_\varphi)\\
  =& \eta_1 + \eta_2 + \eta_3 + \eta_4 + \eta_5+ \eta_6.
\end{align*}
Exemplarily we consider the term $\eta_4$.
Since $\Pi_he_\varphi \in \mathcal{V}^1(\mathcal{T}^{k+1})$  is a valid test
function for \eqref{eq:AD:chns1_fullDisc}--\eqref{eq:AD:chns4_fullDisc} we obtain
$$F_h^4( (v^{k+1}_h,p^{k+1}_h,\varphi^{k+1}_h,\mu^{k+1}_h), \Pi_he_\varphi) = 0.$$
Using \eqref{eq:AD:Clement1}, \eqref{eq:AD:Clement2} as well as
H\"older's and Cauchy-Schwarz's inequality, we have
\begin{align*}
  \eta_4 =&
  (r^{(3)}_h,e_\varphi-\Pi_he_\varphi)
  + \sigma\epsilon (\nabla \varphi^{k+1}_h,\nabla (e_\varphi-\Pi_he_\varphi))\\
  =& \sum_{T\in \mathcal{T}^{k+1}}
  \left[
  (r^{(3)}_h,e_\varphi-\Pi_he_\varphi)_T
  +\sigma\epsilon(\nabla \varphi^{k+1}_h,\nabla (e_\varphi-\Pi_he_\varphi))
  \right] \\
  =& \sum_{T\in \mathcal{T}^{k+1}}
  \left[
  (r^{(3)}_h,e_\varphi-\Pi_he_\varphi)_T
  + \sigma\epsilon(\nabla \varphi^{k+1}_h \cdot \nu,e_\varphi-\Pi_he_\varphi)_{\partial T}
  \right] \\
  \leq&\sum_{T\in \mathcal{T}^{k+1}}
  \|r^{(3)}_h\|_T\|e_\varphi-\Pi_he_\varphi\|_T
  +
  \sum_{E\in \mathcal{E}^{k+1}}
  \sigma\epsilon\|\left[\nabla \varphi^{k+1}_h\right]_\nu\|_E\|e_\varphi-\Pi_he_\varphi\|_E \\
  \leq& C\left(
  \sum_{T\in \mathcal{T}^{k+1}}
  \left(\eta_T^{(3)}\right)^2
  +
  (\sigma\epsilon)^2\sum_{E\in \mathcal{E}^{k+1}}
  \left(\eta_E^{(3)}\right)^2
  \right)^{1/2} \|\nabla e_\varphi\|_\Omega.
\end{align*}
Here $C$ is a generic constant.
In the same manner we derive
\begin{align*}
  \eta_1 \leq C\left(
  \sum_{T\in \mathcal{T}^{k+1}}
  \left(\eta_T^{(1)}\right)^2
  +
  \tau^2\sum_{E\in \mathcal{E}^{k+1}}
  \left(\eta_E^{(1)}\right)^2
  \right)^{1/2} \|\nabla e_v\|_\Omega,\\
  \eta_2 \leq C\left(
  \sum_{T\in \mathcal{T}^{k+1}}
  \left(\eta_T^{(2)}\right)^2
  +
  \tau^2\sum_{E\in \mathcal{E}^{k+1}}
  \left(\eta_E^{(2)}\right)^2
  \right)^{1/2} \|\nabla e_\mu\|_\Omega.
\end{align*}
Using Young's inequality we now directly obtain:

\begin{theorem}\label{thm:AD:errorEstimate}
There exists a constant $C>0$ only depending on the domain $\Omega$ and the regularity of the mesh
$\mathcal{T}^{k+1}$ such that
\begin{align*}
  \underline{\rho}\|e_v\|^2
  + \tau \underline\eta \|\nabla e_v\|^2
  +\tau \underline m\|\nabla e_\mu\|^2
  + \sigma\epsilon\|\nabla e_\varphi\|^2
  +(F'_+(\varphi^{k+1}_h)-F'_+(\varphi^{k+1}),e_\varphi)\\
  \leq C\left( \eta_\Omega^2 + \eta_{h.o.t} + \eta_C \right),
\end{align*}
holds with
\begin{align*}
  \eta_\Omega^2 = &
  \frac{1}{\tau\underline \eta}
  \sum_{T\in \mathcal{T}^{k+1}}
  \left(\eta_T^{(1)}\right)^2
  +
  \frac{\tau}{\underline\eta}\sum_{E\in \mathcal{E}^{k+1}}
  \left(\eta_E^{(1)}\right)^2\\
  &
  \frac{1}{\tau\underline m}
  \sum_{T\in \mathcal{T}^{k+1}}
  \left(\eta_T^{(2)}\right)^2
  +
  \frac{\tau}{\underline m}\sum_{E\in \mathcal{E}^{k+1}}
  \left(\eta_E^{(2)}\right)^2\\
  &
  \frac{1}{\sigma\epsilon}
  \sum_{T\in \mathcal{T}^{k+1}}
  \left(\eta_T^{(3)}\right)^2
  +
  \sigma\epsilon\sum_{E\in \mathcal{E}^{k+1}}
  \left(\eta_E^{(3)}\right)^2,\\
  \eta_{h.o.t.} =& \tau (\mbox{div}(e_v),e_p), \\
  \mbox{ and }\eta_C = &(\mathcal{P}^{k+1}\varphi^k-\varphi^k,e_\mu)
  - (F'_-(\mathcal{P}^{k+1}\varphi^k) -  F'_-(\varphi^k),e_\varphi).
\end{align*}
\end{theorem}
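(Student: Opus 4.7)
The plan is to finish the derivation initiated in the exposition preceding the theorem. The master equality
\[
\tfrac{1}{2}((\rho^k+\rho^{k-1})e_v,e_v) + 2\tau(\eta^k De_v,De_v) + \tau(m^k\nabla e_\mu,\nabla e_\mu) + \sigma\epsilon\|\nabla e_\varphi\|^2 + (F'_+(\varphi^{k+1}_h) - F'_+(\varphi^{k+1}),e_\varphi) = \textstyle\sum_{i=1}^{6}\eta_i
\]
is already in hand, together with the structural bounds on $\eta_1$ (momentum residual $r^{(1)}_h$ against $\|\nabla e_v\|$) and on the chemical-potential term (residual $r^{(3)}_h$ against $\|\nabla e_\varphi\|$). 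What remains is to bound the residuals not treated explicitly in the text and then to close the estimate via Young's inequality.

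For the phase-field evolution contribution I would mimic the argument given explicitly in the text: insert the valid discrete test function $\Pi_h e_\mu\in\mathcal V^1(\mathcal T^{k+1})$ into~\eqref{eq:AD:chns3_fullDisc}, subtract from the continuous equation~\eqref{eq:AD:chns3_timeDisc} tested with $\tau e_\mu$, integrate by parts triangle by triangle to expose the edge jumps $[m^k\nabla\mu^{k+1}_h]_\nu$, and invoke the Cl\'ement approximation~\eqref{eq:AD:Clement1}--\eqref{eq:AD:Clement2}. This delivers a bound of the form $A\,\|\nabla e_\mu\|$ with $A^2 = C\bigl(\sum_T(\eta_T^{(2)})^2 + \tau^2\sum_E(\eta_E^{(2)})^2\bigr)$. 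The continuity-equation contribution, $F^2_h-F^2$ tested with $\tau e_p$, collapses to $\tau(\mbox{div}\,e_v,e_p)$ because $v^{k+1}$ is strongly solenoidal whereas $v^{k+1}_h$ is only discretely so; since no estimator for the pressure is available, this term is simply retained as $\eta_{h.o.t.}$. The two terms arising from the substitutions $\mathcal P^{k+1}\varphi^k\leftrightarrow\varphi^k$ in~\eqref{eq:AD:chns3_fullDisc}--\eqref{eq:AD:chns4_fullDisc} are precisely $\eta_C$.

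With the residuals bounded, I would apply Young's inequality in the form $A\|\nabla e_\star\|\le \delta\|\nabla e_\star\|^2 + A^2/(4\delta)$ to each of the three residual products, choosing $\delta$ as a small fraction of $\tau\underline\eta$, $\tau\underline m$, and $\sigma\epsilon$, respectively. Korn's inequality converts $(\eta^k De_v,De_v)$ into control of $\|\nabla e_v\|^2$; the bounds from Assumption~\ref{ass:rhoetamob_bounded} replace $\rho^k+\rho^{k-1}$ by $2\underline\rho$ in the kinetic term; and the three gradient norms of errors on the right are absorbed into the coercive left-hand side. The surviving residual amplitudes $A^2$ acquire the weights $1/(\tau\underline\eta)$, $1/(\tau\underline m)$, $1/(\sigma\epsilon)$ exactly as in the definition of $\eta_\Omega^2$. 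The cross term $(F'_+(\varphi^{k+1}_h)-F'_+(\varphi^{k+1}),e_\varphi)$ stays on the left and is nonnegative by the monotonicity of $F'_+$ implied by Assumption~\ref{ass:F_NewtonDiff}, so it requires no further treatment.

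The main obstacle is not analytical but algebraic: one must select the Young splitting constants and align the prefactors so that every gradient of an error term on the right is strictly dominated by the corresponding coercive term on the left with the precise weights demanded by the statement. Once this bookkeeping is carried out, the inequality of Theorem~\ref{thm:AD:errorEstimate} follows immediately from the residual bounds collected above.
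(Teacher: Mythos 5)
Your proposal is correct and follows essentially the same route as the paper: the paper's proof is precisely the derivation preceding the theorem (the master identity, Cl\'ement-based residual bounds for the momentum, transport, and potential equations, retention of $\tau(\mbox{div}(e_v),e_p)$ and the projection terms as $\eta_{h.o.t.}$ and $\eta_C$), after which the paper closes the argument exactly as you do, by Young's inequality with the weights $\tau\underline\eta$, $\tau\underline m$, $\sigma\epsilon$ together with Korn's inequality and the lower bounds of Assumption \ref{ass:rhoetamob_bounded}. Your explicit treatment of the transport-equation term with $\Pi_h e_\mu$ just fills in the step the paper dismisses with ``in the same manner.''
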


\begin{remark}
~
\begin{itemize}
  \item The term $\eta_{h.o.t.}$ is of higher order. By approximation results it can
  be estimated in terms of $h_T$ to a higher order then the orders included in $\eta_T^{(i)}$,
  $\eta_E^{(i)}$, $i=1,2,3$. Thus it is neglected in the numerics.
  \item  The term $\eta_C$ arises due to the transfer of $\varphi^k$ from the old grid
  $\mathcal{T}^k$ to the new grid $\mathcal{T}^{k+1}$ through the projection
  $\mathcal{P}^{k+1}$.
  In our numerics presented in Section \ref{sec:Numerics} we use Lagrangian interpolation
  $\mathcal  I^{k+1}$ as projection operator.     We   note that
  $\mathcal I^{k+1}\varphi^k$ and $\varphi^k$ do only differ in regions of the domain
  where coarsening in the last time step took place, if bisection is used as refinement strategy.
  Since it seems unlikly that elements being coarsend in the last time step are refined
  again in the present time step, this term is
  neglected in the numerics. We note that this term might be further estimated to obtain powers of
  $h_T$ by approximation results for the Lagrange interpolation, see e.g. \cite{ErnGuermond}.
  \item Due to these two terms involved the estimator is not fully reliable.
  \item Neglecting these two terms the estimator can be shown to be efficient by the  standard
  bubble technique, see e.g. \cite{HintermuellerHinzeTber,AinsworthOden_Aposteriori}.
  \item An adaptation of the time step size is not considered in the present work, since it would
  conflict with the time discretization over three time instances.
  In our numerics we have to choose  time steps small enough to sufficently well resolve the
  interfacial force $\mu^{k+1}_h\nabla \varphi^k$.
\end{itemize}
\end{remark}

In the numerical part, this error estimator is used together with the mesh adaptation cycle
described in \cite{HintermuellerHinzeTber}.
The overall adaptation cycle
\begin{center}
SOLVE $\to$ ESTIMATE $\to$ MARK $\to$ ADAPT
\end{center}
is performed once per time step.
For convenience of the reader we state the marking strategy here.
\begin{myAlgorithm}[Marking strategy]
~\label{alg:AD:marking}
\begin{itemize}
  \item Fix $a_{\min}>0$ and $a_{\max}>0$, and set $\mathcal{A} = \{T\in \mathcal{T}^{k+1}\,|\,
  a_{\min}\leq |T|\leq a_{\max}\}$.
  \item Define indicators:
  \begin{enumerate}
    \item $\eta_T = \frac{1}{\tau \underline \eta}\left(\eta_T^{(1)}\right)^2
    + \frac{1}{\tau \underline m}\left(\eta_T^{(2)}\right)^2
    + \frac{1}{\sigma\epsilon}\left(\eta_T^{(3)}\right)^2$,
    \item $\eta_{TE} =\sum_{E\subset T}\left[
    \frac{\tau}{\underline \eta}\left(\eta_{TE}^{(1)}\right)^2
    +\frac{\tau}{\underline m}\left(\eta_{TE}^{(2)}\right)^2
    +\sigma\epsilon \left(\eta_{TE}^{(3)}\right)^2\right]$.
  \end{enumerate}
  \item Refinement: Choose $\theta^r\in (0,1)$,
  \begin{enumerate}
    \item Find a set $R^T\subset\mathcal{T}^{k+1}$ with
    $\theta^r\sum_{T\in \mathcal{T}^{k+1}}\eta_T\leq \sum_{T \in R^T}\eta_T$,
    \item Find a set $R^{TE}\subset\mathcal{T}^{k+1}$ with
    $\theta^r\sum_{T\in \mathcal{T}^{k+1}}\eta_{TE}\leq \sum_{T \in R^{TE}}\eta_{TE}$.
  \end{enumerate}
  \item Coarsening: Choose $\theta^c\in (0,1)$,
  \begin{enumerate}
    \item Find the set $C^T\subset\mathcal{T}^{k+1}$ with
    $\eta_T\leq \frac{\theta^c}{N}\sum_{T \in \mathcal{T}^{k+1}}\eta_T\,\forall T\in C^T$,
    \item Find the set $C^{TE}\subset\mathcal{T}^{k+1}$ with
    $\eta_{TE}\leq \frac{\theta^c}{N}\sum_{T \in \mathcal{T}^{k+1}}\eta_{TE}\,\forall T\in C^{TE}$.
  \end{enumerate}
  \item Mark all triangles of $\mathcal{A}\cap (R^T\cup R^{TE})$ for refining.
  \item Mark all triangles of $\mathcal{A}\cap (C^T\cup C^{TE})$ for coarsening.
\end{itemize}
\end{myAlgorithm}

\subsection*{Ensuring the validity of the energy estimate}
To ensure the validity of the energy estimate during the numerical computations we  ensure
that Assumption \ref{ass:FP_leq_F} holds trianglewise.
For the following considerations we restrict to bisection as refinement strategy combined with the
$i$FEM coarsening strategy proposed in \cite{iFEMpaper}. This strategy only coarsens patches
consisting of four triangles by replacing them by two triangles if the central node of the patch is
an inner node of $\mathcal{T}^{k+1}$, and patches consisting of two triangles by replacing them by
one triangle if the central node of the patch lies on the boundary of $\Omega$. A patch fulfilling
one of these two conditions we call a nodeStar.
By using this strategy,  we do not harm the Assumption \ref{ass:FP_leq_F} on triangles that are
refined.
We note that this assumption can
only be violated on patches of triangles where coarsening appears.

After marking triangles for refinement and coarsening  and before applying refinement and
coarsening to $\mathcal{T}^{k+1}$ we make a postprocessing of all triangles that are marked for
coarsening.

Let $M^C$ denote the set of triangles marked for coarsening obtained by the marking strategy
described in Algorithm \ref{alg:AD:marking}.
To ensure the validity of the energy estimate \eqref{eq:FD:energyInequality} we perform the
following post processing steps:
\begin{myAlgorithm}[Post processing]
~\label{alg:AD:PostProcessing}
\begin{enumerate}
  \item For each triangle $T \in M^C$:\\
  if $T$ is not part of a nodeStar\\
  then set $M^C := M^C\setminus T$.
  \item For each nodeStar $S \in M^C$:\\
  if Assumption \ref{ass:FP_leq_F} is not fulfilled on $S$\\
  then set $M^C := M^C \setminus S$.
\end{enumerate}

\end{myAlgorithm}

The resulting set $M^C$ does only contain triangles yielding nodeStars on which the Assumption
\ref{ass:FP_leq_F} is fulfilled.

%%%%%%%%%%%%%%%%%%%%%%%%%%%%%%%%%%%%%%%%%%%%%%%%%%%%%%%%%%%%%%%%%%%%%%%%%%%%%%%%%%%
%%%%%%%%%%%%%%%%%%%%%%%%%%%%%%%%%%%%%%%%%%%%%%%%%%%%%%%%%%%%%%%%%%%%%%%%%%%%%%%%%%%
%%%%%%%%%%%%%%%%%%%%%%%%%%%%%%%%%%%%%%%%%%%%%%%%%%%%%%%%%%%%%%%%%%%%%%%%%%%%%%%%%%%
%%%%%%%%%%%%%%%%%%%%%%%%%%%%%%%%%%%%%%%%%%%%%%%%%%%%%%%%%%%%%%%%%%%%%%%%%%%%%%%%%%%
%Here we show the behaviour if the concept:
%(1) validity of inequality by showing data for spinDec
%(2) Mesh for BM1 with and without postprocessing
%(3) deviation of mass
%(4) Benchmark data
% section identifier: EX
\section{Numerics}\label{sec:Numerics}
Now we use the adaptive concept developed in Section \ref{sec:Adaptivity}  to investigate the
evolution of the energy inequality on the numerical level.

The nonlinear system \eqref{eq:AD:chns1_fullDisc}--\eqref{eq:AD:chns4_fullDisc} appearing in every
time step of our approach is solved using the semi-smooth Newton method.
Let us first describe how the linear systems arising in Newton's method are
solved.
At each time step in the Newton iteration we have to solve systems with linear operators $G$ of the
form
\begin{align*}
  G =
  \left(
  \begin{array}{c|c}
\mathcal F  & \mathcal I\\
\hline
\mathcal T & \mathcal C
\end{array}
\right)
=
\left(
\begin{array}{cc|cc}
A   & B     & I     & 0\\
B^t & 0     & 0     & 0\\
\hline
T   & 0     & C_{11}& C_{12}\\
0   & 0     & C_{21}& C_{22}
\end{array}
\right)
.
\end{align*}
Here $\mathcal F$ and $\mathcal C$ are the discrete realizations of  linearized Navier--Stokes and
Cahn--Hilliard systems, respectively,
while $\mathcal I$ represents their  coupling through the interfacial force, and $\mathcal T$  the
coupling through the transport at the interface. The order of the unknowns is $(v,p,\mu,\varphi)$. 

Unique solvability of the systems arising from Newton's method can be shown by using the energy
method of Section \ref{sec:fullyDiscrete} taking Assumption \ref{ass:F_NewtonDiff} into account.

The system is solved by a preconditioned gmres iteration with restart after 10 iterations. As
preconditioner we use the block diagonal preconditioner
\begin{align*}
  \mathcal{P} =
  \begin{pmatrix}
\tilde {\mathcal F} & 0\\
0 & \mathcal C
\end{pmatrix}
\end{align*}
where $\mathcal C$ is inverted by LU decomposition,
while $\tilde {\mathcal  F}$ is an upper triangular block preconditioner
(\cite{BramblePasciak_TriangularPreconditioner}) for Oseen type problems. It uses the
$F_p$ preconditioner \cite{kayLoghinWelford_FpPreconditioner} for the Schur complement, i.e.
\begin{align*}
  \tilde  {\mathcal F}=
  \begin{pmatrix}
\tilde A & B \\
0 & \tilde S
\end{pmatrix},
\end{align*}
where $\tilde S$ is the $F_p$ preconditioner for the Schur complement of $\mathcal F$ and $\tilde
A$ is composed of the diagonal blocks of $A$ and is inverted by LU decomposition.

The implementation is done in C++, where the adaptive concept is build upon $i$FEM
(\cite{iFEMpaper}). As linear solvers we use {\bf umfpack} (\cite{UMFPACK}) and {\bf cholmod}
(\cite{Cholmod}). The Newton iteration is implemented in its inexact variant, ensuring local
superlinear convergence.

\subsection*{Examples}
We investigate the evolution of the free energy and the validity of the energy
inequality. Since we use Lagrange interpolation as projection operator, we violate the conservation
of mass whenever coarsening is performed. This is numerically investigated.

Thereafter we give
results for a qualitative benchmark for rising bubble dynamics.
For this example we  also show the influence of the required post processing step concerning the
evolution of the meshes.

Concerning the free energy $F$ we use the relaxed double-obstacle free energy \eqref{eq:freeEnergy}
and set the relaxation parameter to $\hp = 10000$.

\subsubsection*{Investigation of the free energy}
We start by investigating the evolution of the free energy and the validity of the energy inequality
in Theorem \ref{thm:FD:energyInequality}.
Here we use the classic example of spinodal
decomposition \cite{CahnHilliard,whatSpinodalDecomposition} as test case.
The parameters are chosen as: $\rho_1=\rho_2 = \eta_1 = \eta_2 = 1$, $g \equiv 0$, and
$m(\varphi) \equiv 10^{-3}\epsilon$, $\epsilon=0.01$, $\sigma = 0.01$ and $\tau = 10^{-5}$.

In absence of outer forces the spinodal decomposition admits a characteristic speed of demixing,
see e.g. \cite{Siggia,otto_seis__crossover_of_demixing_rates}. Especially in the case of a diffusion
driven setting the Ginzburg--Landau energy $E$ decreases with the rate $E \sim t^{-1/3}$.

In Figure \ref{fig:EX:spinDec_Energy} we show the time evolution of the  monotonically decreasing
Ginzburg--Landau energy (left plot).
We obtain the expected rate of $E \sim t^{-1/3}$ and also observe a time span where $E \sim t^{-1}$
holds, as predicted in \cite{otto_seis__crossover_of_demixing_rates}.

Next we investigate the validity of the energy inequality, see Figure \ref{fig:EX:spinDec_Energy}
(right plot).
We there show  the time evolution of the term
\begin{align*}
  \zeta = &\frac{1}{2}\int_\Omega \rho^k\left(v^{k+1}_h\right)^2\,dx
  + \frac{\sigma\epsilon}{2}\int_\Omega |\nabla \varphi^{k+1}_h|^2\,dx
  +\int_\Omega F(\varphi^{k+1}_h)\,dx\nonumber\\
  &+ \frac{1}{2}\int_\Omega \rho^{k-1}(v^{k-1}_h-v^k)^2\,dx
  + \frac{\sigma\epsilon}{2}
  \int_\Omega |\nabla \varphi^{k+1}_h-\nabla\mathcal{I}^{k+1} \varphi^k|^2\,dx\nonumber\\
  & + \tau\int_\Omega 2\eta^k|Dv^{k+1}_h|^2\,dx
  +\tau\int_\Omega m^k|\nabla \mu^{k+1}_h|^2\,dx \nonumber\\
  & -
  \left(\frac{1}{2}\int_\Omega \rho^k\left(v^{k}\right)^2\,dx
  + \frac{\sigma\epsilon}{2}\int_\Omega |\nabla \mathcal{I}^{k+1}\varphi^{k}|^2\,dx
  +\int_\Omega F(\mathcal{I}^{k+1}\varphi^{k})\,dx  + \int_\Omega \rho^k g v^{k+1}_h\right).
\end{align*}
The post processing of Algorithm \ref{alg:AD:PostProcessing} guarantees, that this term is always
negative.
The influence of Algorithm \ref{alg:AD:PostProcessing} on the mesh quality is investigated later.

\begin{figure}
  \centering
  \includegraphics[width=0.4\textwidth]{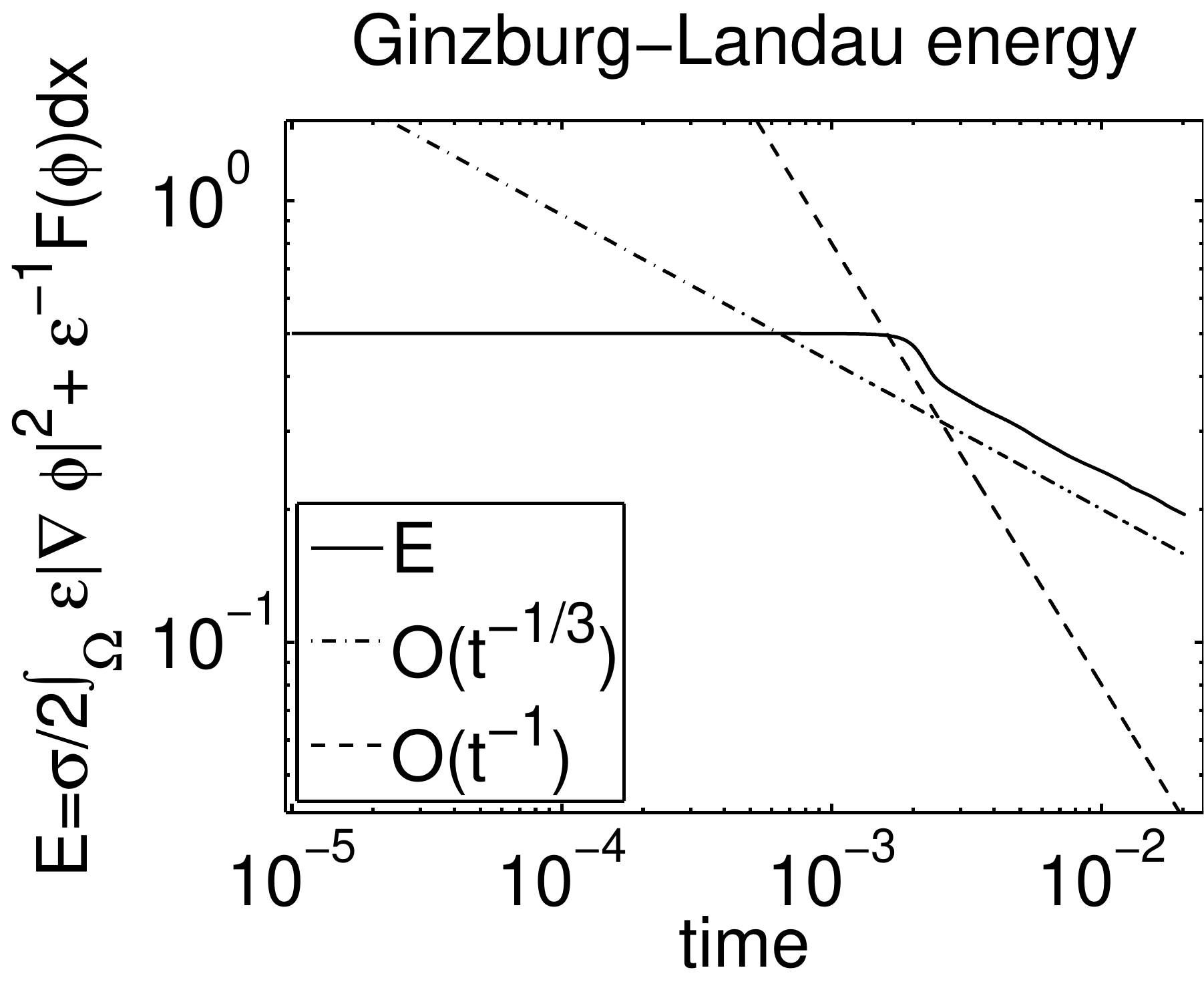}
  \hspace{2em}
  \includegraphics[width=0.4\textwidth]{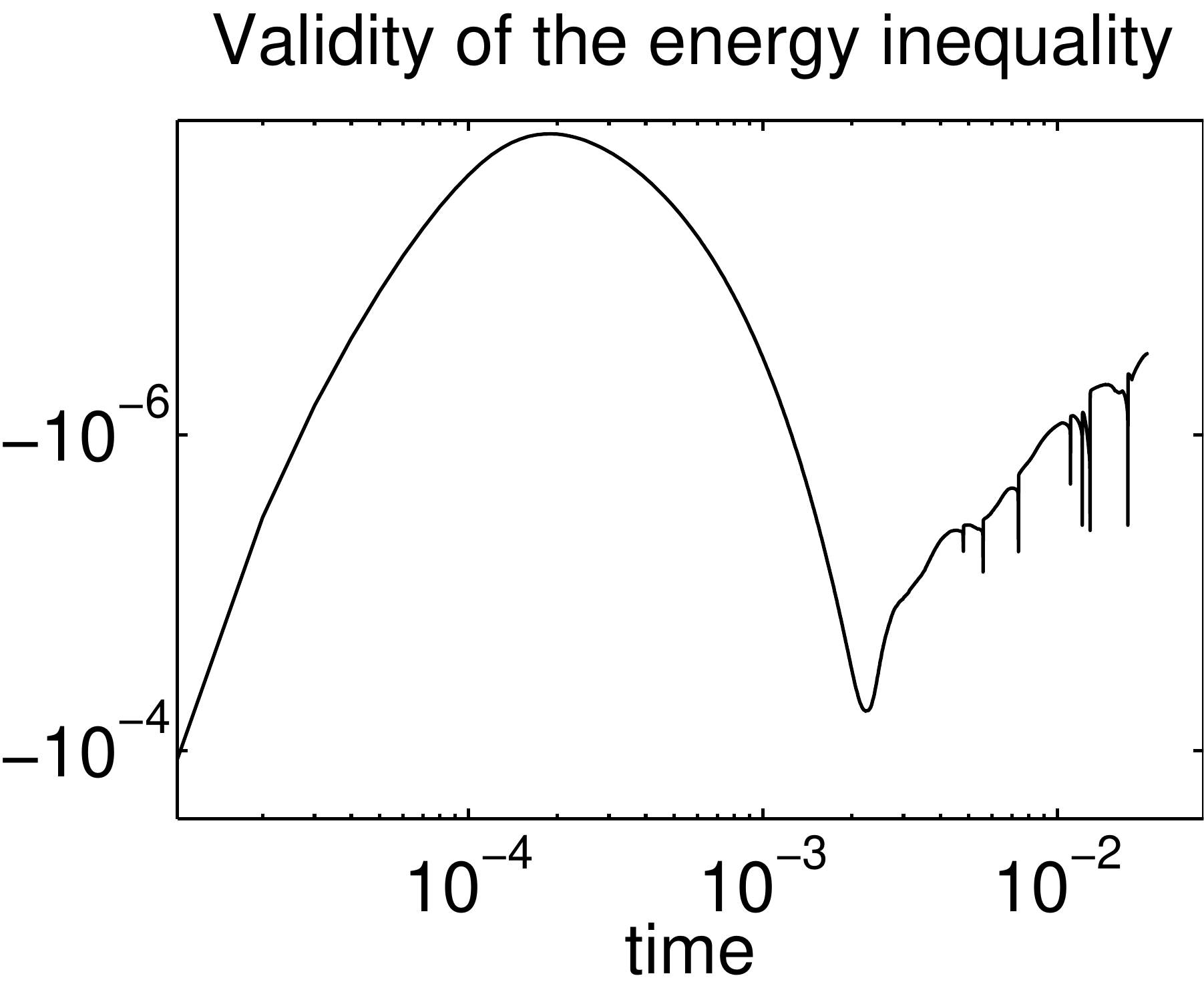}
  \caption{Time evolution of the Ginzburg--Landau energy
  (left), and validity of the energy inequality (right).}
  \label{fig:EX:spinDec_Energy}
\end{figure}

\subsubsection*{The violation in the conservation of mass}
Since we use Lagrange interpolation as projection operator between successive grids, we do not
have full mass conservation, but have a violation in the mean value of $\varphi$ as discussed in
Remark \ref{rm:ProlongationMassDeviation}. In Figure \ref{fig:EX:spinDec_MassDeviation} we depict
the time evolution of the term $\left|\int_\Omega \varphi^{k+1} - \varphi^0\,dx\right|$, i.e. the
difference between the mean value of $\varphi$ and the mean value of the initial phase field $\varphi^0$. The
numerical setup is the spinodal decomposition.

\begin{figure}
  \centering
  \includegraphics[width=0.4\textwidth]{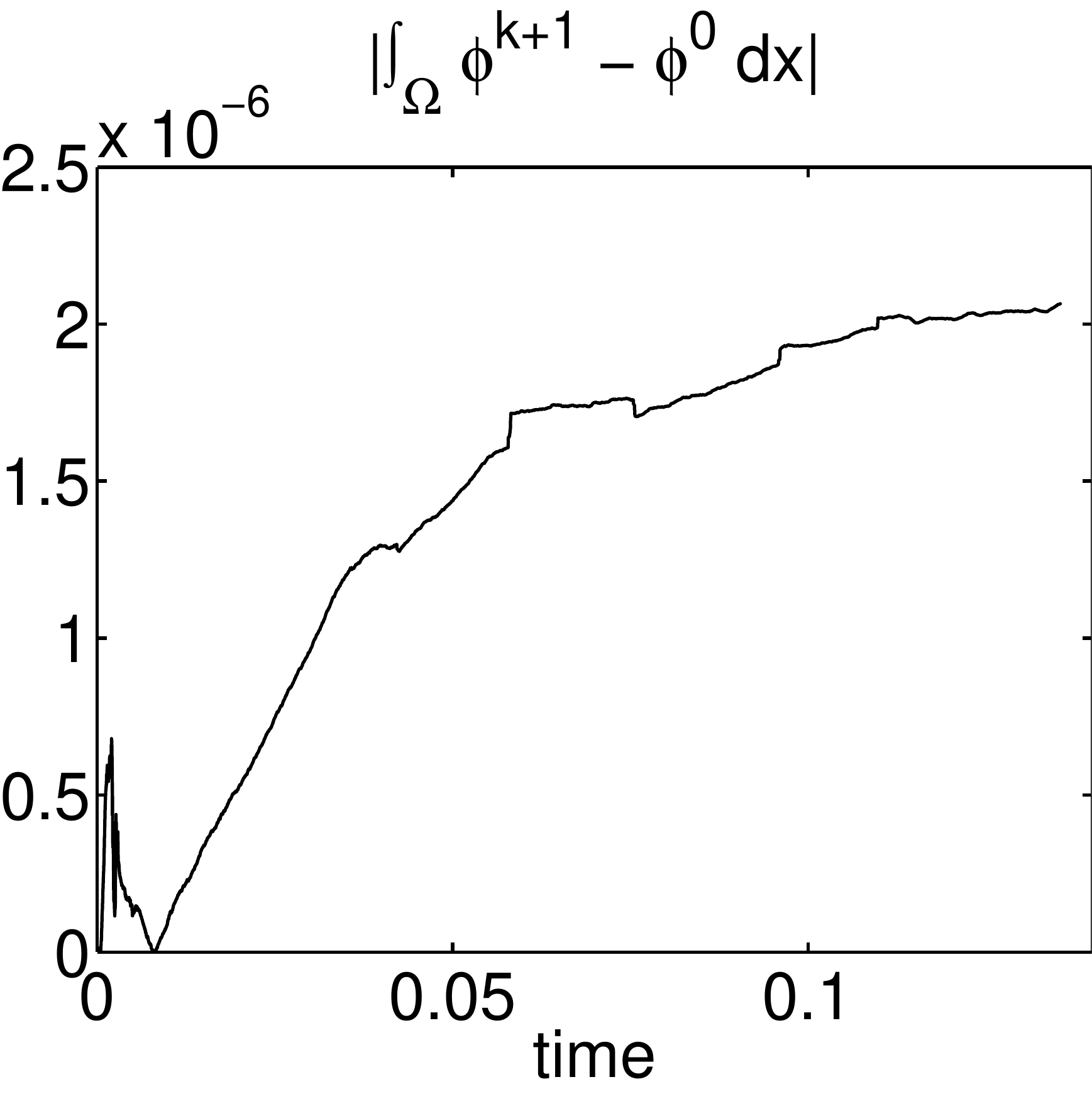}
  \caption{Time evolution of the deviation of the mean value of $\varphi$.}
  \label{fig:EX:spinDec_MassDeviation}
\end{figure}

As can be observed, the violation increases with time, and the violation in mass conservation
finally is of size $10^{-6}$.
We note that the order of the mean value is $|\Omega|$
and here we have $|\Omega|=1$. Thus though we have deviation of mass, its size is small in
comparison to the actual mean value.

\subsubsection*{Comparison with an existing benchmark}
In \cite{Hysing_Turek_quantitative_benchmark_computations_of_two_dimensional_bubble_dynamics}
a quantitative benchmark for the simulation of the rising bubble is proposed. Three different
groups provided numerical results for two benchmark computations of rising bubble scenarios, using
sharp interface models. In \cite{Aland_Voigt_bubble_benchmark} the
\cite{Hysing_Turek_quantitative_benchmark_computations_of_two_dimensional_bubble_dynamics}
benchmark is implemented with computations based on three different diffuse interface approximations
to the setup of
\cite{Hysing_Turek_quantitative_benchmark_computations_of_two_dimensional_bubble_dynamics}.

We briefly describe the setup. We start with an inital bubble of radius $r=0.25$ located at $M =
(0.5,0.5)$ with physical surface tension $\sigma^{phys} = 24.5$, resulting in $\sigma  \approx
15.5972$ (see \cite[Sec. 4.3.4]{AbelsGarckeGruen_CHNSmodell}).
The initial velocity is zero.
In the domain $\Omega = (0,1)\times (0,2)$ we have no-slip boundary conditions for the
velocity field on top and bottom walls, and free-slip on the left and the right walls.
The parameters are given as $\rho_1 = 1000$, $\rho_2 = 100$, $\eta_1 = 10$, $\eta_2 = 1$, resulting
in a Reynolds number of 35.
Here $\rho_2$ and $\eta_2$ correspond to the fluid in the bubble. Due to the smaller
density we expect the bubble to rise in the gravity field with force $g = (0,-0.98)^t$.
Since we use a diffuse interface model, we have
the additional parameters $m \equiv 10^{-3}\epsilon$ and $\epsilon = 0.02$. The time
discretization step is chosen as $\tau = 2.5e-5$.
The rising bubble is simulated over a time horizon of $3$ units
of time.

\bigskip

In \cite{Hysing_Turek_quantitative_benchmark_computations_of_two_dimensional_bubble_dynamics}
the following benchmark parameters are defined.
For a bubble represented  by $\varphi(x)<0$ we  measure the evolution of
circularity, rising velocity and of the  center of mass.

The circularity is defined by
\begin{equation*}
  \Theta_\varphi =
  \frac{\mbox{perimeter\,of\,area-equivalent\,circle}}{\mbox{perimeter\,of\,bubble}}\leq 1,
\end{equation*}
the rising velocity is defined as
\begin{equation*}
  V_\varphi = \frac{\int_{\varphi<0} v \, dx}{\int_{\varphi<0} 1 \, dx},
\end{equation*}
and the center of mass is given by
\begin{equation*}
  M_\varphi = \frac{\int_{\varphi<0} x_2 \, dx}{\int_{\varphi<0} 1 \, dx}.
\end{equation*}
Here $x_2$ denotes the second component of the spatial variable $x = (x_1,x_2)$. Note that the
process is symmetric and it is sufficient to integrate over the second component.

As benchmark values the minimal circularity  $(\Theta_\varphi)_{\min}$
together with the time
$t_\Theta := t(\Theta_\varphi \equiv (\Theta_\varphi)_{\min})$,
the maximal rising velocity
$(V_\varphi)_{\max}$
together with the time
$t_V := t(V_\varphi \equiv (V_\varphi)_{\max})$
and the center of mass $M_\varphi(t=3)$ at the final time $t=3$ are presented.

Our results are shown in Table \ref{tab:EX:BM1}, first row (GHK $\epsilon=0.02$). For comparison
we in the second row also give the results obtained in \cite{Aland_Voigt_bubble_benchmark}  (AV
$\epsilon = 0.02$).
Note that there the polynomial double well free energy is used resulting in a diffuser interface.
The results in the following rows are taken from the sharp interface numerics in
\cite{Hysing_Turek_quantitative_benchmark_computations_of_two_dimensional_bubble_dynamics}.
The
groups TP2D, FreeLIFE and MooNMD are the groups participating in
\cite{Hysing_Turek_quantitative_benchmark_computations_of_two_dimensional_bubble_dynamics}. The
group TP2D is the group of Turek at the Technical University of Dortmund, FreeLIFE is provided by
the \'Ecole polytechnique f\'ed\'erale de Lausanne by the group of Burman and MooNMD is the group of
Tobiska from the University of Magdeburg.
With BGN we denote the results presented in
\cite{BarrettGarckeNuernberg_stableParametricFEdisc}, which are obtained by a sharp interface
approach based on the model used in the present paper.
\begin{table}
\centering
\begin{tabular}{cccccc}
\hline
Group &
$(\Theta_\varphi)_{\min}$ & $t_\Theta$ &
$(V_\varphi)_{\max}$ & $t_V$ &
$M_\varphi(t=3)$\\
\hline
GHK $\epsilon=0.02$  & 0.9080 & 1.9672 & 0.2388 & 0.9765 & 1.0786\\
\hline
AV  $\epsilon=0.02$  & 0.9159 & 2.0040 & 0.2375 & 1.0400 & 1.0733\\
\hline
\hline
TP2D     & 0.9013 & 1.9041 & 0.2417 & 0.9213 & 1.0813\\
FreeLIFE & 0.9011 & 1.8750 & 0.2421 & 0.9313 & 1.0799\\
MooNMD   & 0.9013 & 1.9000 & 0.2417 & 0.9239 & 1.0817\\
\hline
BGN	     & 0.9014 & 1.9000 & 0.2417 & 0.9230 & 1.0819
\end{tabular}
\caption{Results for the first benchmark from
\cite{Hysing_Turek_quantitative_benchmark_computations_of_two_dimensional_bubble_dynamics}.}
\label{tab:EX:BM1}
\end{table} 

We see that our results are in quite good aggrement with those obtained with sharp interface
numerics.

In Figure \ref{fig:EX:benchmark_evolution} we show the evolution of the bubble for the benchmark
setting.

\begin{figure}
  \centering
  \fbox{\includegraphics[width=0.18\textwidth]{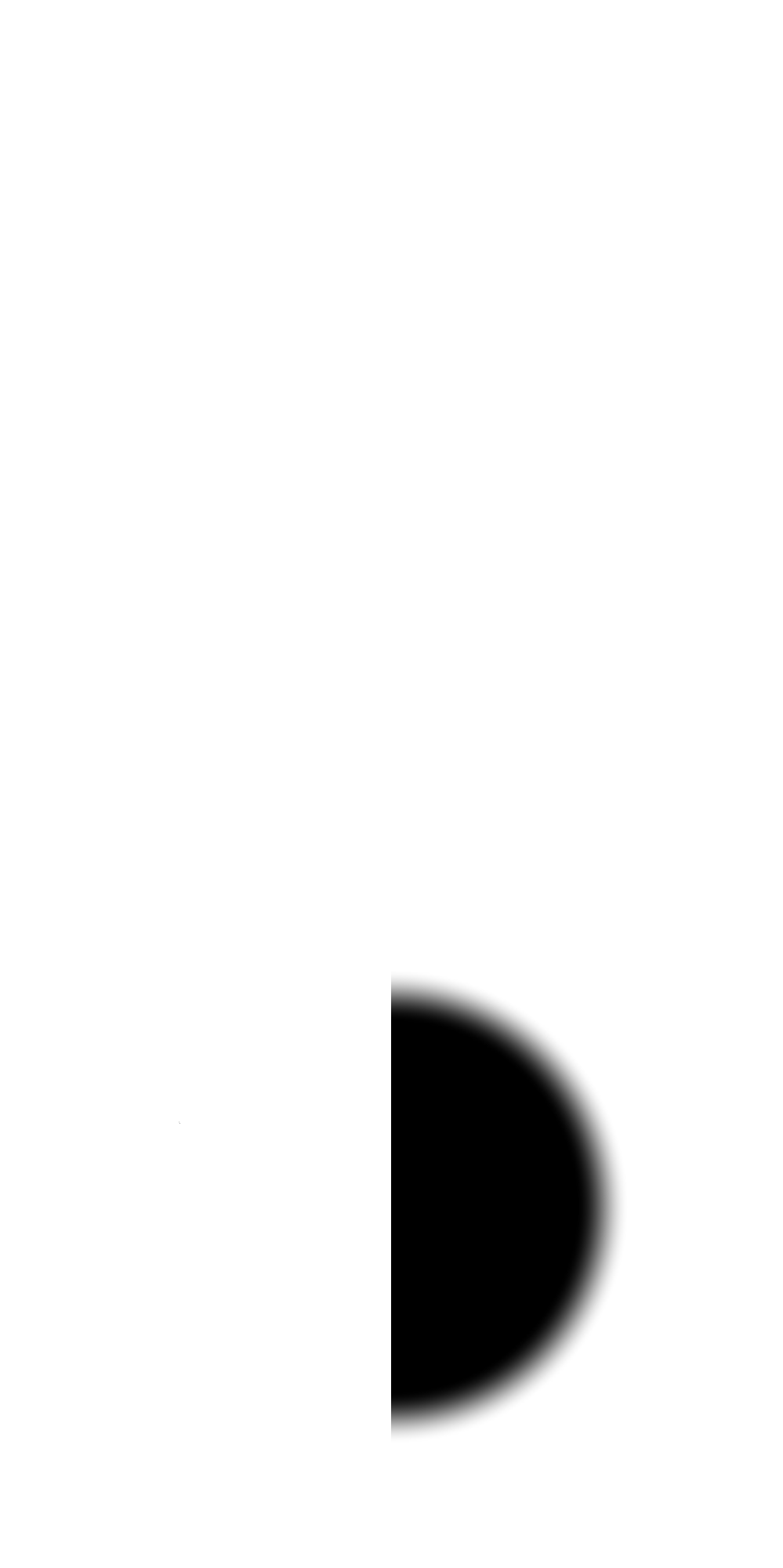}}
  \fbox{\includegraphics[width=0.18\textwidth]{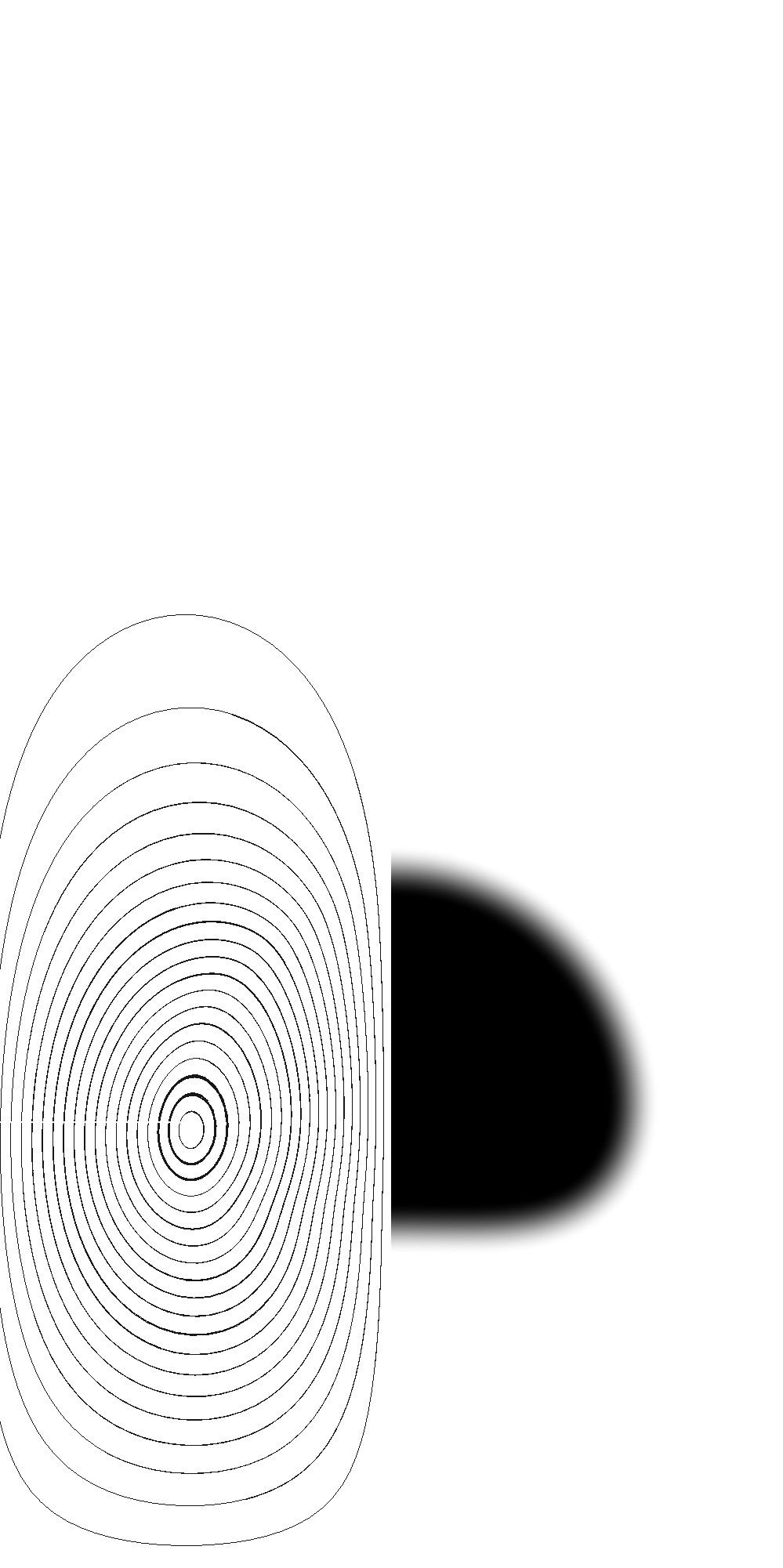}}
  \fbox{\includegraphics[width=0.18\textwidth]{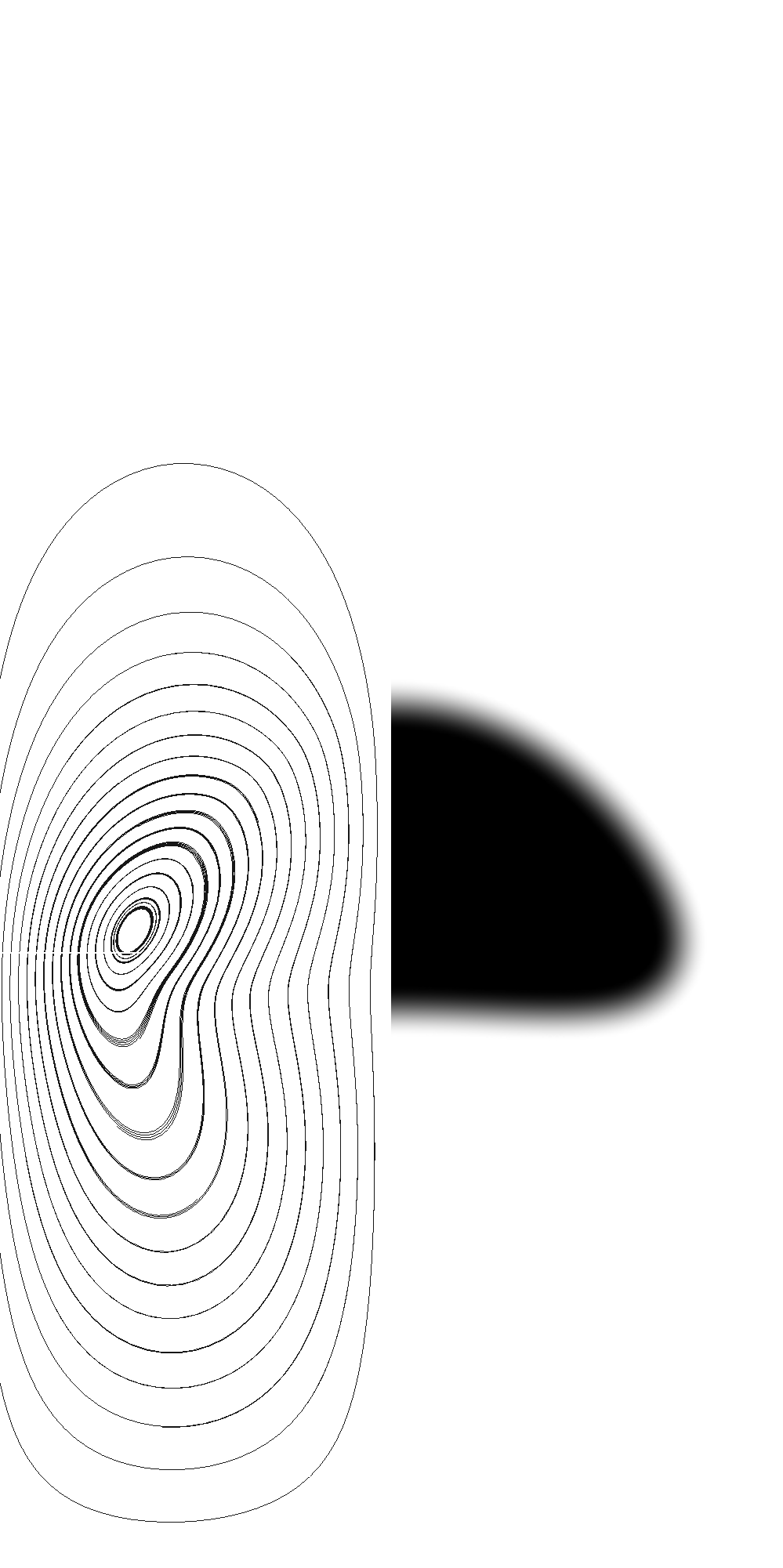}}
  \fbox{\includegraphics[width=0.18\textwidth]{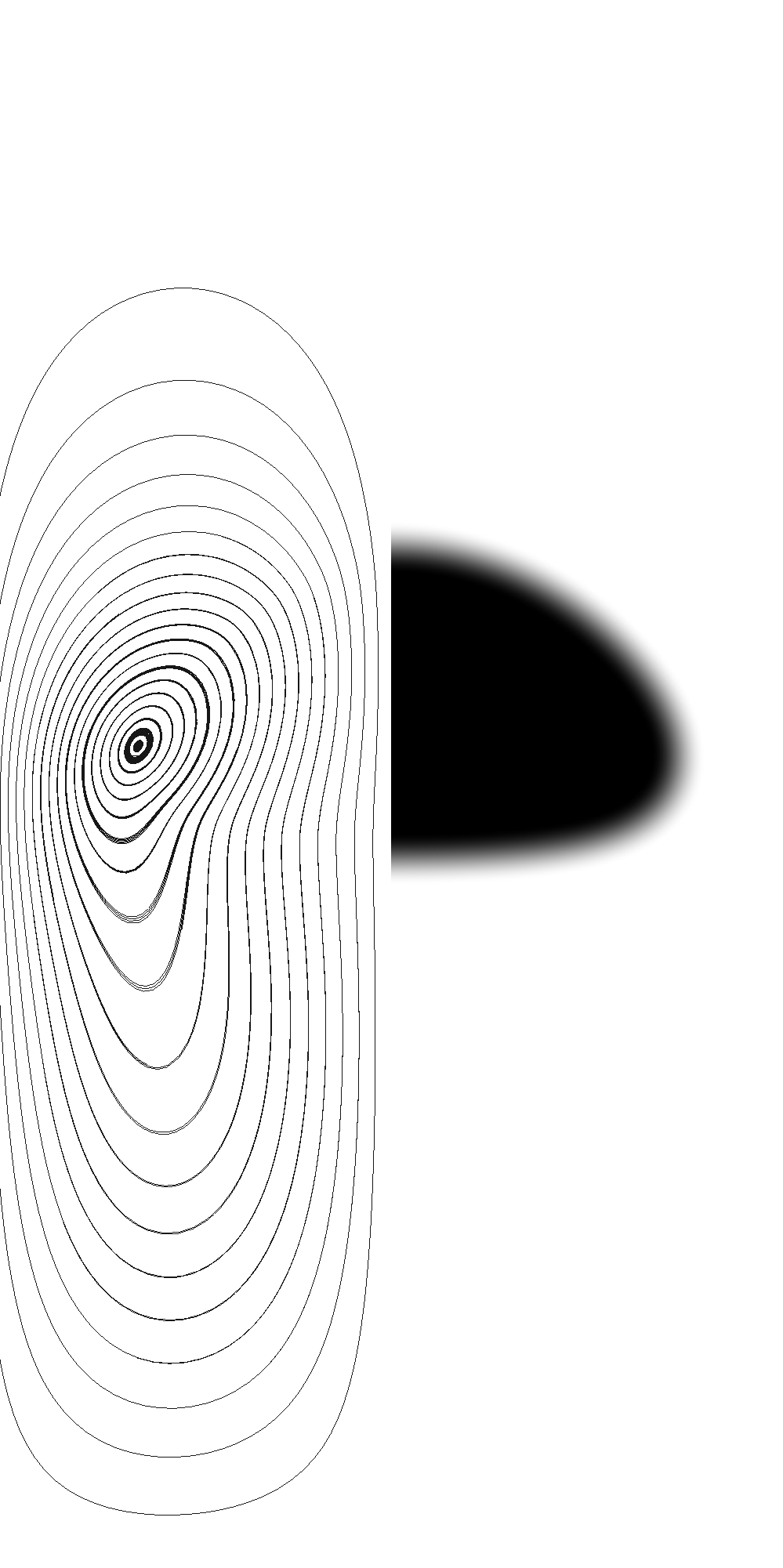}}
  \caption{The evolution of the bubble at times $t\in\{0,1,2,3\}$.
  The phase field is shown in the right part   and streamlines of the velocity field in the left
  part of each plot.}
  \label{fig:EX:benchmark_evolution}
\end{figure}

\subsubsection*{Distribution of the error indicators}
Next we investigate the distribution of the error indicators.
We observe that a similar distribution is observed as in  the
case of the numerical simulation of the Cahn--Hilliard equation with transport reported in
\cite{HintermuellerHinzeKahle_adaptiveCHNS}.
The errors are concentrated at the boundary of the interface.
We further have additional error contributions from the Navier--Stokes part
in a neighborhood of the bubble.

In Figure \ref{fig:EX:error_distribution} we show the distribution of the error indicators
$\eta_T$ and $\eta_{TE}$ defined in Algorithm \ref{alg:AD:marking}.

\begin{figure}
  \centering
\begin{minipage}[t]{0.45\textwidth}
   \centering
  \fbox{\includegraphics[width=0.45\textwidth]{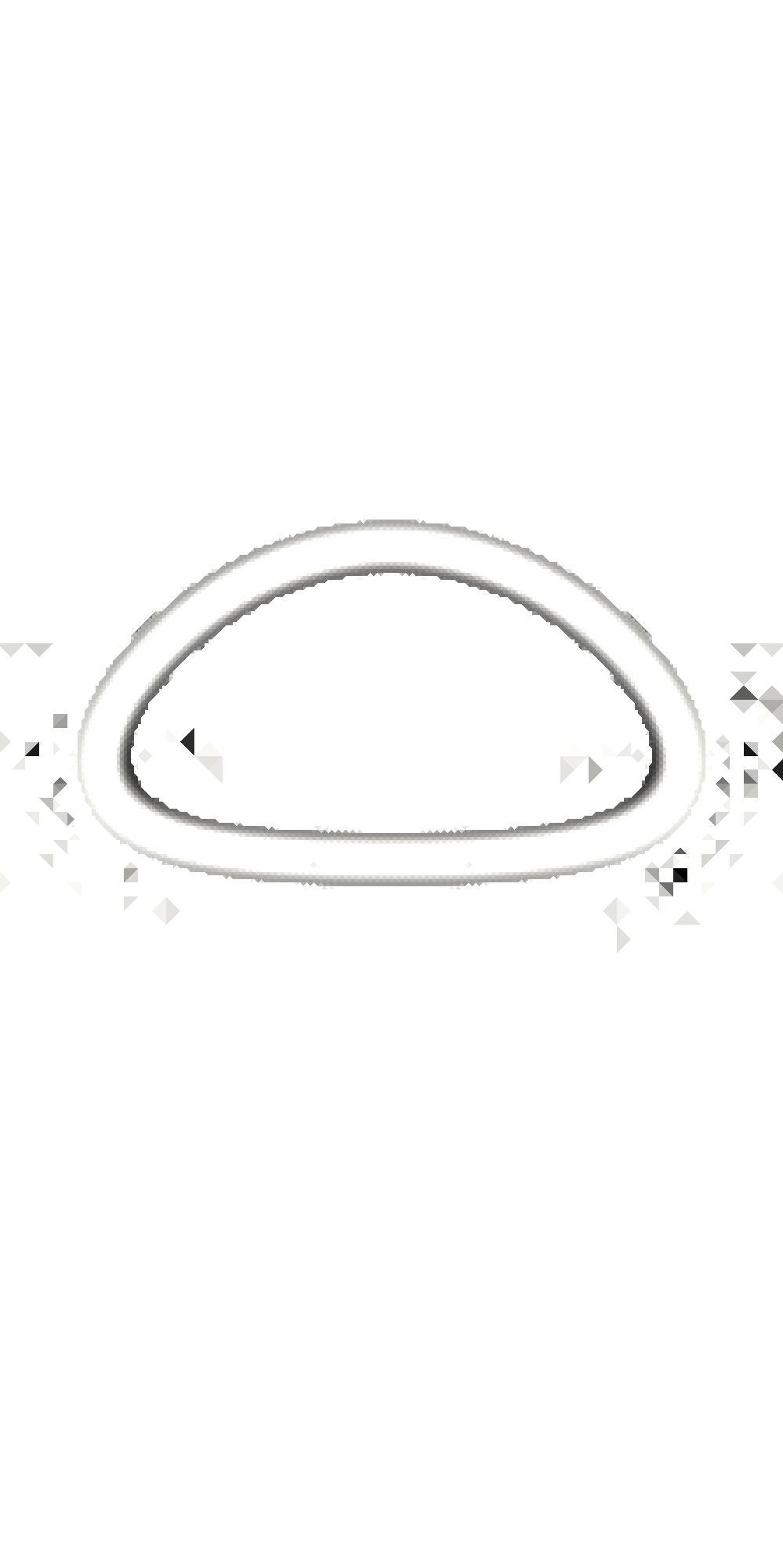}}
  \fbox{\includegraphics[width=0.45\textwidth]{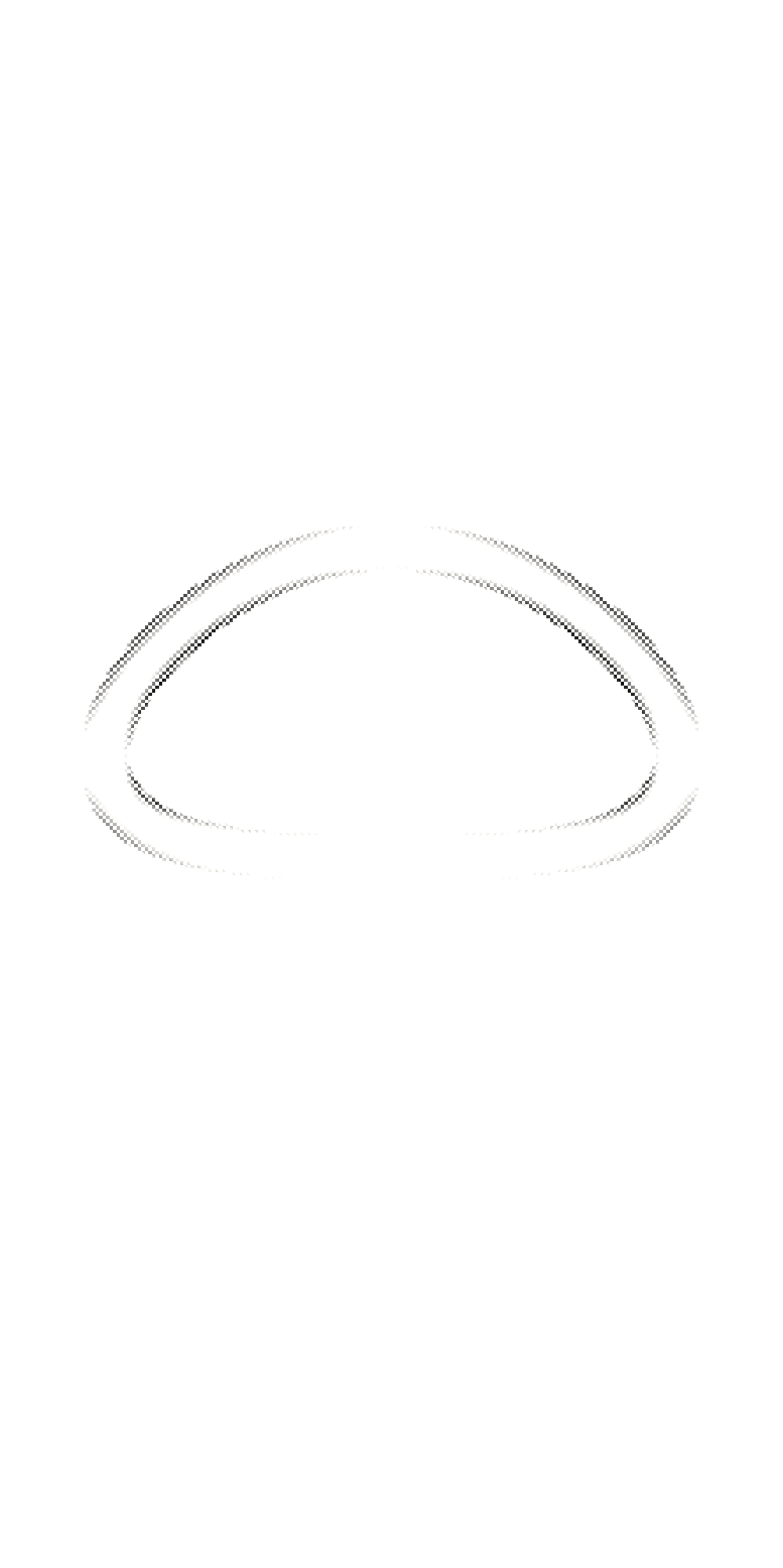}}
  \caption{The distribution of the error indicators at time $t=3$. $\eta_T$ on the left, $\eta_{TE}$
  on the right. Black indicates higher errors.}
  \label{fig:EX:error_distribution}
\end{minipage} 
\hspace{1em}
\begin{minipage}[t]{0.45\textwidth}
\centering
\fbox{\includegraphics[width=0.45\textwidth]{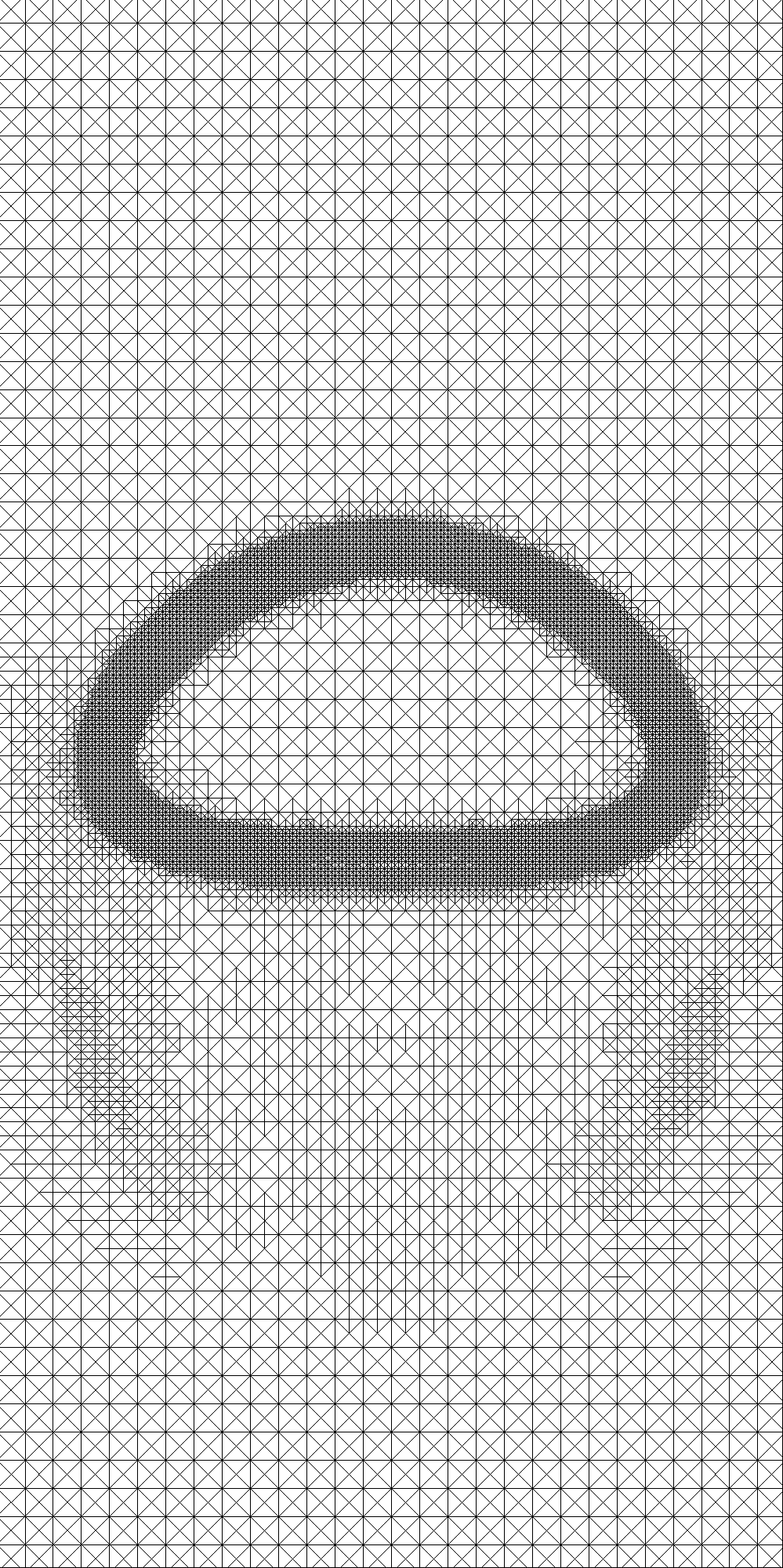}}
\fbox{\includegraphics[width=0.45\textwidth]{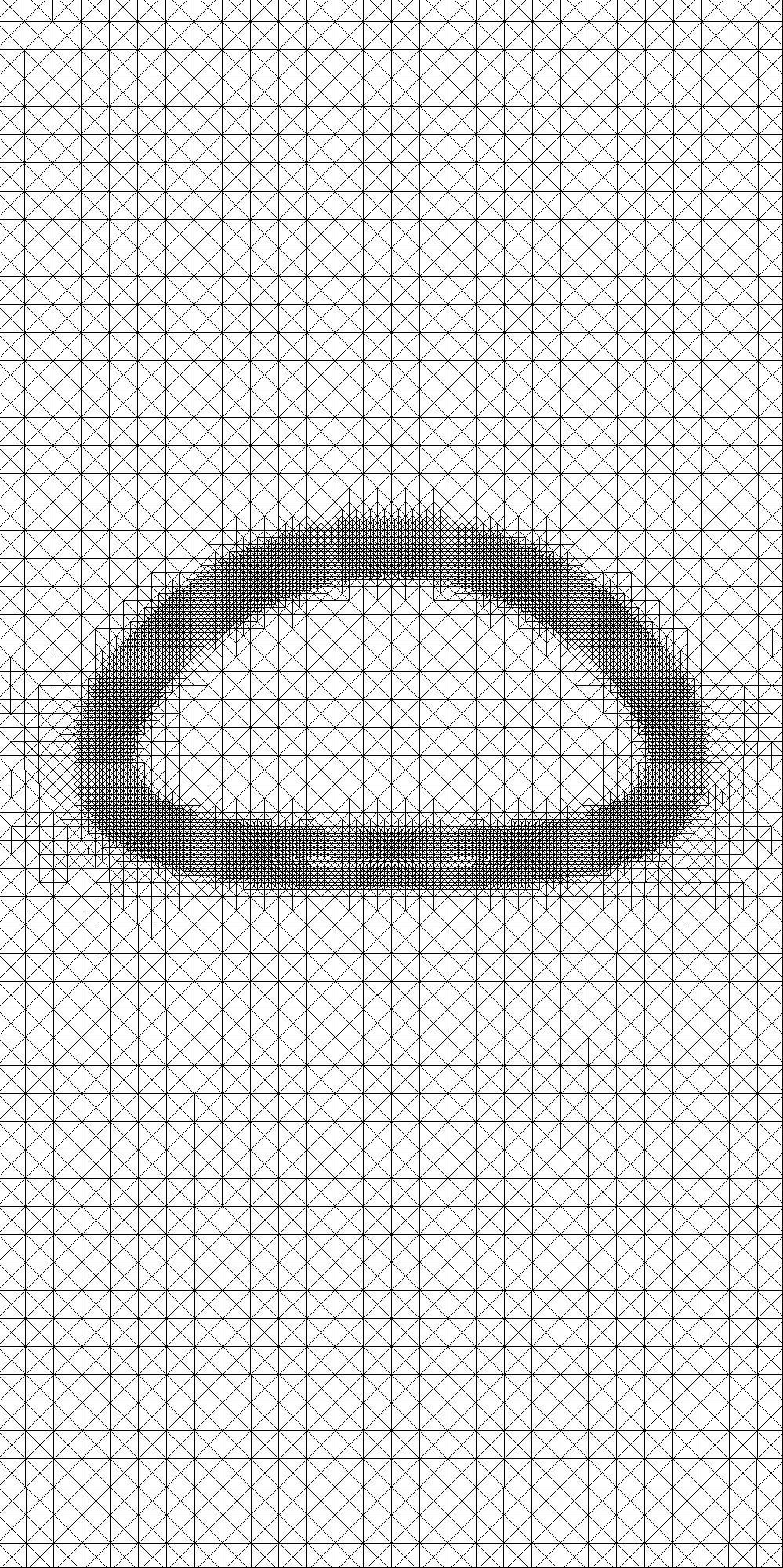}}
\caption{The mesh with (left) and without (right) postprocessing at final time $t=3$.}
\label{fig:EX:finalMesh}
\end{minipage}
\end{figure}

\subsubsection*{Influence of the post processing of the marked triangles}
Finally we investigate the spatial discretization obtained by our adaptive concept. Especially
we show the influence of the post processing step of Algorithm \ref{alg:AD:PostProcessing}
on reducing the number of triangles that are coarsened.

We simulate the rising bubble benchmark in the setting described above with and without the
postprocessing steps.
We note that without the postprocessing
artificial energy is generated numerically through the coarsening process and
 the validity of the energy inequality can not be guaranteed, and
in fact is not given.

In Figure \ref{fig:EX:finalMesh} we show the final meshes at $t=3$ with
postprocessing (left) and without postprocessing (right).
We see that there are regions in
the bulk phase below the bubble where the postprocessing prevents the adaptive strategy from
coarsening the triangles to the coarsest level.
Thus we obtain a larger number of nodes if we use
the post processing as is demonstrated in Figure \ref{fig:EX:NT_evolution} where we display the
evolution of the number of mesh nodes with and without postprocessing.

\begin{figure}
\centering
\includegraphics[width=0.2\textwidth]{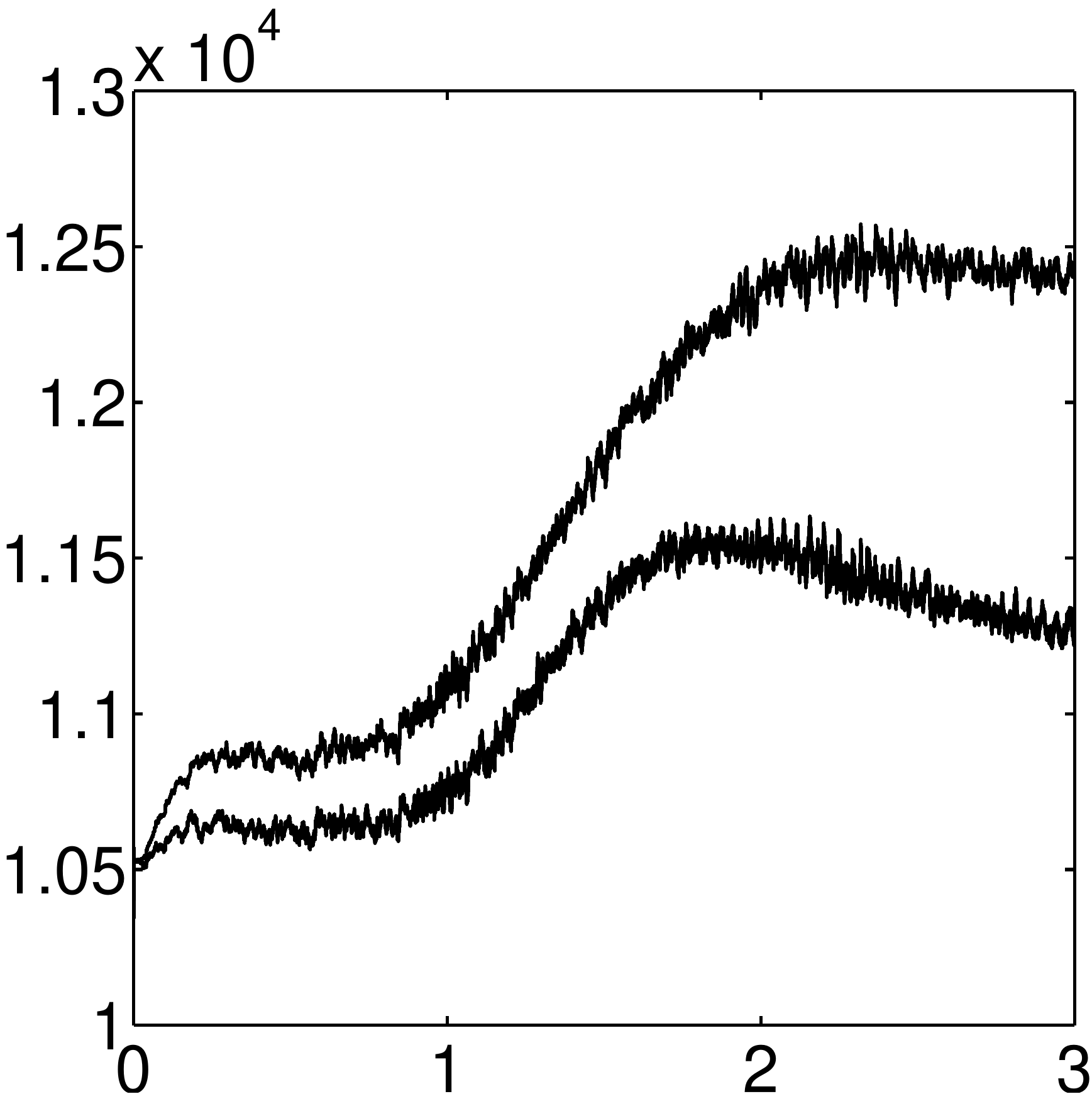}
\caption{The evolution of the number of nodes for the described benchmark with (upper line) and
without (lower line) postprocessing.}
\label{fig:EX:NT_evolution}
\end{figure}
 
We see that the number of nodes increases (by maximal 10\% in this example) since not
all triangles that are marked for coarsening are coarsened.
On the other hand we note, that the
energy inequality in the case without post processing is violated in 1692 of 60000 simulation steps
and this violation takes  place within the first 7000 time steps.

Let us note, that from the fluid mechanical point of view and if one considers the bubble as an
obstacle in the channel flow, the region detected by the post
processing is the wake, where the fluid is accelerated. Thus we expect a refined flow mesh there.

\section{Conclusion}
We propose a time discretization for the thermodynamically consistent model from
\cite{AbelsGarckeGruen_CHNSmodell} that gives rise to a time discrete energy inequality that can
be conserved in the fully discrete setting. The systems to be solved in the discrete setting are
fully coupled and a concept for handling the linear systems arising from Newton's method is
proposed.

Based on the energy inequality we derive an error estimator both measuring the local
error in the discretization of the velocity field, and in the phase field and the chemical
potential.
We investigate the behavior of our  solver and especially could numerically verify the validity of
the discrete energy inequality.

Post processing, applied to fulfill the energy inequality in the discrete setting, in our
example leads to meshes containing approximatly 10\% more triangles than the meshes obtained without
post processing, but to a more reasonable refinement from the flow-physics point of view.
The numerical results might be further improved by replacing the Lagrange interpolation operator by
e.g. the mass-conserving quasi-interpolation operator introduced in
\cite{Carstensen_QuasiInterpolation}.
This will be subject to future work.

% \bibliographystyle{alpha}
% \bibliography{D:/quellen}

\newcommand{\etalchar}[1]{$^{#1}$}

\end{document}